\numberwithin{equation}{section}
\newcommand{\ssp}{\hspace{1pt}}
\newtheorem{proposition}{Proposition}[section]
\newtheorem{lemma}[proposition]{Lemma}
\newtheorem{theorem}[proposition]{Theorem}
\newtheorem{conjecture}[proposition]{Conjecture}
\theoremstyle{definition}
\newtheorem{definition}[proposition]{Definition}
\newtheorem{remark}[proposition]{Remark}
\title{Tilted biorthogonal ensembles, Grothendieck
random partitions, and determinantal tests}
\author{Svetlana Gavrilova and Leonid Petrov}
\date{}
\begin{document}

\maketitle

\begin{abstract}
	We study probability measures on partitions based on symmetric Grothendieck polynomials. These deformations of Schur polynomials introduced in the K-theory of Grassmannians share many common properties. Our Grothendieck measures are analogs of the Schur measures on partitions introduced by Okounkov \cite{okounkov2001infinite}. Despite the similarity of determinantal formulas for the probability weights of Schur and Grothendieck measures, we demonstrate that Grothendieck measures are \emph{not} determinantal point processes. This question is related to the principal minor assignment problem in algebraic geometry, and we employ a determinantal test first obtained by Nanson in 1897 for the $4\times4$ problem. We also propose a procedure for getting Nanson-like determinantal tests for matrices of any size $n\ge4$, which appear new for $n\ge 5$.

	By placing the Grothendieck measures into a new framework of tilted biorthogonal ensembles generalizing a rich class of determinantal processes introduced by Borodin \cite{Borodin1998b}, we identify Grothendieck random partitions as a cross-section of a Schur process, a determinantal process in two dimensions. This identification expresses the correlation functions of Grothendieck measures through sums of Fredholm determinants, which are not immediately suitable for asymptotic analysis. A more direct approach allows us to obtain a limit shape result for the Grothendieck random partitions. The limit shape curve is not particularly explicit as it arises as a cross-section of the limit shape surface for the Schur process. The gradient of this surface is expressed through the argument of a complex root of a cubic equation.
\end{abstract}

\section{Introduction}
\label{sec:intro}

\subsection{Random partitions from symmetric functions}

The study of random integer partitions involving probability weights expressed through symmetric polynomials has been a long-standing topic in integrable probability and related fields \cite{BorodinGorinSPB12}, \cite{BorodinPetrov2013Lect}. Asymptotic analysis of various measures on partitions produced law of large numbers and asymptotic fluctuation results in many stochastic models describing complex real-world phenomena, including longest increasing subsequences \cite{logan_shepp1977variational}, \cite{VershikKerov_LimShape1077}, \cite{baik1999distribution}, interacting particle systems \cite{Johansson2000}, random growth models \cite{BorFerr2008DF}, random polymer models \cite{Oconnell2009_Toda}, \cite{COSZ2011}, \cite{OSZ2012}, random matrices \cite{BorodinGorin2013beta}, and geometry \cite{okounkov2000random}, \cite{Okounkov2006uses}.

One of the earliest studied ensembles of random partitions based on symmetric functions is the \emph{Schur measure} introduced in \cite{okounkov2001infinite}. The Schur measure probability weights have the form
\begin{equation}
	\label{eq:Schur_measure_def}
	\mathop{\mathrm{Prob}}(\lambda)\coloneqq
	\frac{1}{Z}
	\underbrace{\frac{
	\det[x_j^{\lambda_i+N-i}]_{i,j=1}^{N}}
	{\prod_{1\le i<j\le N}(x_i-x_j)}}_{s_\lambda(x_1,\ldots,x_N )}
	\underbrace{\frac{
	\det[y_j^{\lambda_i+N-i}]_{i,j=1}^{N}}
	{\prod_{1\le i<j\le N}(y_i-y_j)}}_{s_\lambda(y_1,\ldots,y_N )}.
\end{equation}
Here $\lambda=(\lambda_1\ge \ldots\ge \lambda_N\ge0 )$ are integer partitions which we think of as our random objects, $x_i,y_j\ge 0$ with $x_iy_j<1$ are parameters of the measure. The quantities $s_\lambda(x_1,\ldots,x_N )$ and $s_\lambda(y_1,\ldots,y_N )$ in \eqref{eq:Schur_measure_def} are the well-known \emph{Schur symmetric polynomials} in the variables $x_1,\ldots,x_N $ and $y_1,\ldots,y_N $, respectively, indexed by the same partition $\lambda$. The probability normalizing constant $Z=\prod_{i,j=1}^{N}(1-x_iy_j)^{-1}$ has a product form thanks to the Cauchy summation identity for Schur polynomials.

The Schur measures are particularly tractable thanks to their determinantal structure, which allows expressing correlation functions 
\begin{equation}
	\label{eq:Schur_corr_f}
	\rho(a_1,\ldots,a_m )\coloneqq
	\mathop{\mathrm{Prob}}\left( \textnormal{the random set $\left\{ \lambda_i+N-i \right\}\subset \mathbb{Z}_{\ge0}$ contains each $a_1,\ldots,a_m $} \right) 
\end{equation}
of an arbitrary order $m$ as $m\times m$ determinants $\det[K(a_i,a_j)]_{i,j=1}^m$ of a fixed correlation kernel $K(a,b)$, where $a,b\in \mathbb{Z}_{\ge0}$. The kernel has a double contour integral form, readily amenable to asymptotic analysis by steepest descent.

Over the past two decades, Schur measures have been generalized to other families of symmetric polynomials, including Macdonald polynomials \cite{BorodinCorwin2011Macdonald} and their degenerations such as Jack \cite{BorodinGorin2013beta}, Hall-Littlewood \cite{BufetovPetrov2014}, \cite{BufetovMatveev2017}, and $q$-Whittaker polynomials \cite{BorodinPetrov2013Lect}, \cite{MatveevPetrov2014}. More recently, these efforts have extended to symmetric rational functions (like spin $q$-Whittaker and spin Hall-Littlewood functions) arising as partition functions of integrable (in the sense of the Yang-Baxter equation) vertex models \cite{Borodin2014vertex}, \cite{BorodinPetrov2016_Hom_Lectures}, \cite{BorodinWheelerSpinq}, \cite{ABPW2021free}. The vertex model approach also naturally included distinguished nonsymmetric polynomials powering the structure of multispecies stochastic systems \cite{borodin_wheeler2018coloured}, \cite{agg-bor-wh2020-sl1n}. 

While these more general symmetric polynomials and rational functions share many properties with the Schur polynomials, the technique of determinantal point processes does not straightforwardly extend. This has led to several interesting alternative approaches, including eigenoperators \cite{BorodinCorwin2011Macdonald} and duality \cite{BorodinCorwinSasamoto2012}, which brought multiple contour integral formulas for expectations of observables. Recently \cite{Imamura2021skewRSK} presented a direct mapping between $q$-Whittaker and cylindric Schur measures \cite{borodin2007periodic} preserving specific observables. Since the latter measures are determinantal, this allows for employing determinantal process methods for the asymptotic analysis of these observables.

\subsection{Grothendieck measures on partitions}

Our primary focus is on \emph{Grothendieck measures on partitions} whose probability weights are expressed through the Grothendieck symmetric polynomials:
\begin{equation}
	\label{eq:Grothendieck_measure_definition_complete_intro}
	\mathop{\mathrm{Prob}}(\lambda)
	\coloneqq
	\frac1{Z'}	
	\underbrace{
	\frac{\det\bigl[ 
		x_i^{\lambda_j+N-j}(1-\beta x_i)^{j-1}\bigr]_{i,j=1}^{N}
	}
	{\prod_{1\le i<j\le n}(x_i-x_j)}
	}_{G_\lambda(x_1,\ldots,x_N )}
	\underbrace
	{\frac{
	\det\bigl[ 
	y_i^{\lambda_j+N-j}(1-\beta y_i^{-1})^{N-j}\bigr]_{i,j=1}^{N}
	}{\prod_{1\le i<j\le n}(y_i-y_j)}}_
	{\overline{G}_\lambda(y_1,\ldots,y_N )}.
\end{equation}
Here $x_i,y_j$, and $\beta$ are parameters
such that $x_i,y_j\ge0$, $x_iy_j<1$, and $\beta\le \min_{1\le i\le N}(x_i^{-1},y_i)$.
The latter condition implies that the probability
weights are nonnegative. The Grothendieck symmetric polynomials $G_\lambda(x_1,\ldots,x_N )$ and $\overline{G}_\lambda(y_1,\ldots,y_N )$ are one-parameter deformations of the Schur polynomials appearing in the K-theory of Grassmannians.
The normalizing constant is 
$
Z'=
\prod_{i=1}^{N}(1-x_i\beta)^{N-1}
\prod_{i,j=1}^N(1-x_iy_j)^{-1}
$.
When $\beta=0$, the Grothendieck measure 
\eqref{eq:Grothendieck_measure_definition_complete_intro} reduces to the
Schur measure \eqref{eq:Schur_measure_def}.
We refer to
\cite{lascoux1982structure},
\cite{buch2002littlewood},
\cite{fomin1994grothendieck},
\cite{yeliussizov2015duality}, 
\cite{chan2021combinatorial},
and 
\cite{hwang2021refined}
for details, properties, and various 
multiparameter generalizations of Grothendieck polynomials.
All methods of the present paper apply in a setting when there are multiple $\beta_j$'s (see the polynomials $G_\lambda$ and $\overline{G}_\lambda$ in \eqref{eq:Grothendieck_polynomials} in the text). However, in the Introduction and asymptotic analysis, we restrict to the case of the homogeneous $\beta_j$'s.

\medskip

In this paper, we obtain two main results for the Grothendieck measures:
\begin{enumerate}[$\bullet$]
	\item We show that despite the determinant representation of their probability weights, Grothendieck measures do not possess a determinantal structure of correlations. This observation may appear unexpected given the similarity of Grothendieck probability weights compared to the Schur measures.
	\item We establish a link between Grothendieck random partitions and Schur processes, the latter being determinantal point processes on a two-dimensional lattice. We perform this connection within an extended framework of tilted biorthogonal ensembles, which we introduce. This connection provides an essential structure for the Grothendieck measures. It enables us to derive formulas expressing their correlations through sums of Fredholm determinants and prove limit shape results.	
\end{enumerate}
We formulate these results in the remainder of the Introduction.

\begin{remark}
	\label{rmk:degenerations_of_sHL}
	It was observed in \cite[Sections 8.3 and 8.4]{Borodin2014vertex} that
	the $q=0$ specialization of spin Hall-Littlewood polynomials 
	produces determinantal partition functions of vertex models
	which resemble the Grothendieck polynomials $G_\lambda$ 
	in \eqref{eq:Grothendieck_measure_definition_complete_intro}. 
	Most of the machinery for computing expectations of observables
	of the form $q^{\mathrm{height}}$ breaks down for $q=0$, so it is not 
	immediately clear whether vertex models are applicable in the analysis
	of Grothendieck measures.
	Moreover,
	limit shape results are not yet established for random
	partitions with spin Hall-Littlewood weights or their $q=0$ degenerations
	(see, however, \cite{Ahn2020} for limit shapes of Macdonald random partitions in another regime, as $q,t\to1$).
\end{remark}

\subsection{Absence of determinantal structure}

\begin{theorem}
	\label{thm:non_determinantal_intro}
	For certain fixed $N$ and values of parameters $x_i,y_j$, and $\beta$, the correlations \eqref{eq:Schur_corr_f} of the Grothendieck measures do not possess a determinantal form. That is, there does not exist a function $K\colon \mathbb{Z}_{\ge0}^{2}\to \mathbb{C}$ for which $\rho(a_1,\ldots,a_m )=\det[K(a_i,a_j)]_{i,j=1}^m$ for all $m$ and all pairwise distinct $a_1,\ldots,a_m\in\mathbb{Z}_{\ge0}$.
\end{theorem}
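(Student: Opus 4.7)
The plan is to reduce the statement to a finite-dimensional algebraic obstruction on principal minors and then invoke Nanson's classical identity. If a function $K\colon \mathbb{Z}_{\ge 0}^{2}\to\mathbb{C}$ with $\rho(a_1,\ldots,a_m)=\det[K(a_i,a_j)]_{i,j=1}^m$ existed for every $m$ and every tuple of pairwise distinct $a_i$'s, then for any fixed four-element subset $\{a_1,a_2,a_3,a_4\}\subset\mathbb{Z}_{\ge 0}$ the $15$ non-empty correlations $\rho(a_{i_1},\ldots,a_{i_k})$ would coincide with the $15$ non-empty principal minors of the $4\times 4$ matrix $[K(a_i,a_j)]_{i,j=1}^4$. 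It therefore suffices to exhibit one four-element window and one choice of parameters for which the vector of correlations cannot be realized as the principal minors of any $4\times 4$ matrix.

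Next, I would invoke Nanson's $1897$ polynomial relation satisfied by the principal minors of every $4\times 4$ matrix. Because Nanson's identity is invariant under the diagonal gauge $K(a,b)\mapsto f(a)K(a,b)/f(b)$, which is the only residual ambiguity in a determinantal kernel, the identity is a genuine obstruction expressible purely in terms of the $\rho$'s on the chosen window. Writing it as a polynomial $\mathcal{N}(\rho)$ in these values, the existence of $K$ forces $\mathcal{N}(\rho)=0$.

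I would then specialize to an explicit small case. Taking $N$ just large enough to leave room for four informative positions in $\mathbb{Z}_{\ge 0}$, one chooses admissible numerical values of $x_i$, $y_j$, and $\beta$ satisfying $x_iy_j<1$ and $\beta\le\min_i(x_i^{-1},y_i)$. Using \eqref{eq:Grothendieck_measure_definition_complete_intro}, each correlation $\rho(a_{i_1},\ldots,a_{i_k})$ is a finite sum of Grothendieck weights over partitions whose shifted sequence $\{\lambda_i+N-i\}$ contains the prescribed positions; these sums terminate thanks to the decay of $G_\lambda\overline{G}_\lambda$ and can be evaluated in exact rational arithmetic. Substituting the resulting $\rho$-values into $\mathcal{N}$ and verifying $\mathcal{N}(\rho)\neq 0$ completes the argument.

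The main obstacle is computational rather than conceptual: one must arrange the window and parameters so that the correlation sums terminate and $\mathcal{N}$ evaluates to a provably nonzero rational number. Since the Schur limit $\beta=0$ is genuinely determinantal, $\mathcal{N}(\rho)$ vanishes there, which suggests treating $\mathcal{N}(\rho)$ as a power series in $\beta$ and isolating its leading nontrivial coefficient as a rational function of the $x_i$'s and $y_j$'s; any generic parameter choice at which that coefficient does not vanish then yields the required counterexample. This viewpoint also clarifies the mechanism by which the Grothendieck deformation destroys the determinantal structure of the underlying Schur ensemble.
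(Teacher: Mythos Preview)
Your proposal is correct and follows essentially the same route as the paper: reduce to a four-point window, apply Nanson's $4\times 4$ principal-minor identity $\mathfrak{N}_4$, and exhibit parameters at which it fails to vanish. The paper carries this out with $N=2$, window $\{0,1,2,3\}$, $x_i=y_j=1/2$, $\beta=-1$, and, exactly as you suggest, confirms the obstruction by expanding $\mathfrak{N}_4$ in $\beta$ (the leading term is of order $\beta^4$). One small clarification: the one-point correlations are genuinely infinite sums over partitions and do not literally terminate; rather, for $N=2$ each summand is of the form \eqref{eq:rho_Gr_2_point}, so the series sums in closed form to a rational function of $x,y,\beta$, which is what makes the exact-arithmetic verification go through.
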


We show the nonexistence of a correlation kernel $K$ by constructing an explicit polynomial in the correlation functions $\rho(a_1,\ldots,a_m )$, which vanishes identically if the correlation functions have a determinantal form (we call such polynomials \emph{determinantal tests}).  We then show that for a specific choice of parameters, $N=2$, $x_i=y_j=1/2$, $\beta=-1$, the determinantal test does not vanish. While for \Cref{thm:non_determinantal_intro}, we only need a specific choice of parameters, we expect the absence of determinantal structure to hold for generic parameters in the Grothendieck measures.

\medskip

The problem of finding a kernel representing all
correlations $\rho(a_1,\ldots,a_m )$ in a determinantal form
is the same as the well-known \emph{principal minor
assignment problem} in algebraic geometry. This problem
seeks an $n\times n$ matrix whose all principal (diagonal)
minors are given, but such an underlying matrix does not
exist for all choices of (prospective) principal minors.
Therefore, one has to find relations between principal
minors. These relations are polynomial, and each may be used
as a determinantal test. The variety of $n\times n$
principal minors becomes complicated already for $n=4$ (it
is minimally generated by 65 polynomials of degree 12), but
for \Cref{thm:non_determinantal_intro}, it suffices to show
that one generating polynomial does not vanish. In fact, the
determinantal test we employ in our proof was written down
by Nanson in 1897 for $4\times 4$ matrices
\cite{nanson1897xlvi}. In \Cref{sub:history}, we discuss the
rich history of the principal minor assignment problem and
several instances of its rediscovery within the study of
determinantal point processes. In
\Cref{sub:Nanson_det_test_derivation,sub:higher_order_Nanson},
we present a self-contained derivation of the Nanson's
determinantal test and suggest a generalization of the
Nanson's test to matrices of arbitrary size. This
generalization appears new.

\subsection{Tilted biorthogonal ensembles}
\label{sub:tilted_intro}

To connect Grothendieck measures to Schur processes, which are determinantal processes on the two-dimensional integer lattice, we consider a more general framework of \emph{tilted biorthogonal ensembles}, which is inspired by a talk of Kenyon \cite{Kenyon_IPAM_talk}.  The ordinary biorthogonal ensembles introduced in \cite{Borodin1998b} are measures on partitions with probability weights of the form
\begin{equation}
	\label{eq:biorthogonal_ensembles_intro}
	\mathop{\mathrm{Prob}}(\lambda)=\frac{1}{Z}
	\det\bigl[ \Phi_i(\ell_j) \bigr]_{i,j=1}^{N}
	\det\bigl[ \Psi_i(\ell_j) \bigr]_{i,j=1}^{N},
	\qquad \ell_j\coloneqq \lambda_j+N-j,
\end{equation}
where $\Phi_i,\Psi_j$ are given functions, and $Z$ is the normalizing constant.  Biorthogonal ensembles are determinantal processes on $\mathbb{Z}_{\ge0}$ in the same sense as the Schur measures. Moreover, when $\Phi_i(k)=x_i^k$, $\Psi_j(k)=y_j^k$, the weights \eqref{eq:biorthogonal_ensembles_intro} coincide with \eqref{eq:Schur_measure_def}.

We ``tilt'' the biorthogonal ensemble
\eqref{eq:biorthogonal_ensembles_intro} by inserting
$j$-dependent difference operators into the determinants.\footnote{Recall that in the Introduction, we only deal with the homogeneous beta parameters
$\beta_j\equiv \beta$, see \Cref{sub:def_tilted_biorthogonal}
below for the general case.}
When $\Phi_i(k)=x_i^k$, $\Psi_j(k)=y_j^k$, the action of
these operators results in the factors $(1-\beta x_i)^{j-1}$
and $(1-\beta y_i^{-1})^{N-j}$ in
\eqref{eq:Grothendieck_measure_definition_complete_intro}.
In general, we apply the operator $(D)^{j-1}$ to
$\Phi_i(\ell_j)$, where $Df(k)=f(k)-\beta f(k+1)$, and
$(D^{\dagger})^{N-j}$ to $\Psi_i(\ell_j)$, where
$D^{\dagger}f(k)=f(k)-\beta f(k-1)\mathbf{1}_{k\ge1}$.  Here
and throughout the paper, $\mathbf{1}_{A}$ stands for the
indicator of an event or a condition $A$. We arrive at the
following measure on partitions:
\begin{equation}
	\label{eq:biorthogonal_ensembles_tilted_intro}
	\mathop{\mathrm{Prob}}(\lambda)=\frac{1}{Z'}
	\det\bigl[ (D)^{j-1}\Phi_i(\ell_j) \bigr]_{i,j=1}^{N}
	\det\bigl[ (D^{\dagger})^{N-j}\Psi_i(\ell_j) \bigr]_{i,j=1}^{N},
	\qquad 
	\ell_j= \lambda_j+N-j.
\end{equation}
For details, we refer to \Cref{sub:def_tilted_biorthogonal} in the text.

The action of $D$ is the same as the multiplication by the
matrix $T_\beta(k,l)\coloneqq \mathbf{1}_{l=k}-\beta \ssp
\mathbf{1}_{l=k-1}$, and $D^{\dagger}$ is the multiplication
by $T_\beta$ on the opposite side.  Using this, we identify
(\Cref{thm:properties_of_W_2d}) the joint distribution of
$(\ell_1>\ldots>\ell_N )$ under the tilted biorthogonal
ensemble with that of the points $(x_1^1>\ldots>x^N_N )$ in
the two-dimensional ensemble $\{x^m_j\colon 1\le m,j\le N \}$
which has probability weights proportional to 
\begin{equation}
	\label{eq:2d_process_intro}
	\det\left[ \Phi_i(x^1_j) \right]_{i,j=1}^N
	\biggl(\ssp
		\prod_{m=1}^{N-1}
		\det\left[ T_{\beta}(x^m_i,x^{m+1}_j) \right]_{i,j=1}^{N}
	\biggr)
	\det\left[ \Psi_i(x^{N}_j) \right]_{i,j=1}^N.
\end{equation}
The two-dimensional process has probability weights given by products of determinants. Thus, it is determinantal thanks to the well-known Eynard--Mehta theorem \cite{eynard1998matrices}, \cite{borodin2005eynard}, see also \cite[Theorem 4.2]{borodin2009}. 

The above identification allows us to write down certain Fredholm determinantal formulas for marginal distributions and correlation functions of tilted biorthogonal ensembles; see \Cref{sub:correlations_ensemble} and \Cref{prop:correlations_tilted} in particular.

When $\Phi_i(k)=x_i^k$ and $\Psi_j(k)=y_j^k$ for all $i,j$,
the two-dimensional determinantal process
\eqref{eq:2d_process_intro} becomes the Schur process whose
correlation kernel has a double contour integral form
\cite{okounkov2003correlation}. The particular
specializations of the Schur process parameters are given in
\Cref{sub:2d_Schur_process} in the text. Our Schur process
has nonnegative probability weights only for $\beta<0$, and
this is the case we restrict to in our asymptotic analysis
(see \Cref{sub:asymp_intro} below). The case $\beta=0$ is
covered by standard results on Schur measures. 
It is plausible that our results on the Grothendieck limit shape still apply to values of $\beta>0$, even if probabilities in the two-dimensional process are negative, as long as the Grothendieck probability weights \eqref{eq:Grothendieck_measure_definition_complete_intro} remain nonnegative. See \Cref{conj:positive_beta} for details.

\begin{remark}[Application to the five-vertex model]
	\label{rmk:5v}
	In \cite{Kenyon_IPAM_talk}, Kenyon expressed certain
	distributions arising in the five-vertex model (see also
	\cite{deGierKenyon2021limit}) as tilted biorthogonal
	ensembles.  It would be very interesting to apply our
	results to the asymptotic analysis of the five-vertex
	model, but there are three clear obstacles.
	First, the two-dimensional process for the five-vertex
	model is not the Schur process but rather a multiparameter
	analog of the more complicated model of lozenge tilings of
	the hexagon (see, e.g., \cite{Gorin2007Hexagon},
	\cite{Petrov2012} for the determinantal structure of the
	original tilings of the hexagon). One does not have as
	elegant expressions for the correlation kernel in the case
	of multiple parameters. 
	Second, the probability weights in
	the two-dimensional process are complex-valued.  This makes
	probabilistic identification of limit shapes problematic;
	see also the discussion in \Cref{subsub:positive_beta}
	below.  Third, for the five-vertex model, the multiple
	parameters $x_i,y_j$ are solutions to the Bethe equations.
	This makes a potential asymptotic analysis even more
	intricate (see, however, \cite{Priezzhev2003} and
	\cite{BaikLiuTASEP-CPAM} for a related analysis of TASEP on
	the ring).
\end{remark}

\subsection{Limit shape}
\label{sub:asymp_intro}

Consider Grothendieck random partitions \eqref{eq:Grothendieck_measure_definition_complete_intro}
with homogeneous parameters $x_i\equiv x>0$, $y_j\equiv y>0$, such that $xy<1$ and $\beta<0$.
Let us draw Young diagrams of our Grothendieck random
partitions in the $(u,v)$ coordinate system rotated by~$45^\circ$, see \Cref{fig:YD_45_and_limsh}, left.
Each partition is encoded by a 
piecewise linear function $v=\mathfrak{W}_N(u)$
with derivatives $\pm1$
and integer maxima and minima. 
Since our partitions have at most $N$ parts, 
we almost surely have 
$\mathfrak{W}_N(u)\ge |u|$ for all $u$, 
$\mathfrak{W}_N(u)= |u|$ if $|u|$ is large enough,
and $\mathfrak{W}_N(u)\le u+2N$ if $u\ge -N$.

\begin{theorem}
	\label{thm:G_limsh}
	Fix the parameters $x,y,\beta$ as above. There exists a 
	continuous, piecewise differentiable, 1-Lipschitz function $\mathfrak{W}(u)=
	\mathfrak{W}(u\mid x,y,\beta)$ with
	$\mathfrak{W}(u)\ge |u|$ and $\mathfrak{W}(u)=|u|$ if $|u|$ is large enough, such that
	\begin{equation*}
		\lim_{N\to+\infty}\frac{\mathfrak{W}_N(u N)}{N}=\mathfrak{W}(u),\qquad u\in \mathbb{R},
	\end{equation*}
	where the convergence is pointwise in probability.
	See \Cref{fig:YD_45_and_limsh}, right, for an illustration.
\end{theorem}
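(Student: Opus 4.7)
The plan is to use \Cref{thm:properties_of_W_2d} to realize the Grothendieck particle configuration $\{\ell_j\}_{j=1}^N$ as the diagonal cross-section $\{x_j^j\}$ of the two-dimensional ensemble \eqref{eq:2d_process_intro}. For $\beta<0$ this process has nonnegative weights and is a specialization of the Schur process, whose determinantal correlation kernel admits a double contour integral representation (see \Cref{sub:2d_Schur_process}). The theorem will then follow from steepest-descent asymptotics of this kernel on the diagonal slice combined with a standard concentration argument for determinantal processes.

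First I would convert convergence of $\mathfrak{W}_N(uN)/N$ into a statement about particle density. Under the Maya-diagram bijection between partitions with at most $N$ parts and configurations $\{\ell_j\}\subset\mathbb{Z}_{\ge 0}$, the derivative of $\mathfrak{W}_N$ at a generic point flips sign according to whether the corresponding site is occupied or empty. Consequently, $\mathfrak{W}_N(uN)/N\to\mathfrak{W}(u)$ in probability for each fixed $u$ is equivalent to pointwise-in-probability convergence of the empirical counting function $u\mapsto N^{-1}\#\{j\colon\ell_j\ge \lfloor uN\rfloor+N\}$ to a deterministic nondecreasing function $R(u)\in[0,1]$, after which one sets $\mathfrak{W}(u)=|u|+2R(u)$ (with the obvious modification for $u<0$).

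Second, I would compute the asymptotic counting function by a saddle-point analysis of the Schur-process correlation kernel on the diagonal slice. The large-$N$ phase function combines contributions of the two specializations $x,y$ with the intermediate $T_\beta$-transition factor from \eqref{eq:2d_process_intro}, and its critical-point equation is a cubic in the spectral variable---this is the source of the cubic mentioned in the abstract. In the liquid region this cubic has a pair of complex conjugate roots $z_\ast(u),\overline{z_\ast(u)}$, and the standard contour-deformation argument of \cite{okounkov2003correlation} yields the density
\begin{equation*}
\rho(u)=\tfrac{1}{\pi}\arg z_\ast(u),
\end{equation*}
while in the frozen regions all three roots are real and $\rho(u)\in\{0,1\}$. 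Integrating $\rho$ produces $R(u)$, and hence $\mathfrak{W}(u)$; the properties $\mathfrak{W}(u)\ge|u|$, $1$-Lipschitz, and $\mathfrak{W}(u)=|u|$ for $|u|$ large are immediate from $\rho\in[0,1]$ together with the location of the frozen regions. To upgrade convergence of expectations to convergence in probability, I would use that although the Grothendieck measure itself is not determinantal (\Cref{thm:non_determinantal_intro}), the law of $\{\ell_j\}$ is a marginal of the determinantal process \eqref{eq:2d_process_intro}. The variance bound $\mathrm{Var}(\#\{j\colon\ell_j\ge M\})\le\mathbb{E}\,\#\{j\colon\ell_j\ge M\}$ valid for any determinantal process, together with Chebyshev's inequality, yields the desired pointwise concentration.

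The main obstacle is the saddle-point analysis itself. The diagonal $\{x_j^j\}$ is neither a pure space-like nor a pure time-like section of the 2D Schur process, so the large-$N$ phase function mixes all three factors of \eqref{eq:2d_process_intro} in a nonstandard way and the critical equation is a genuine cubic rather than the quadratic that appears in classical analyses of Schur measures. Verifying the existence and proper topology of the steepest-descent contours throughout the admissible range of $u$, identifying the liquid--frozen boundary, and handling the edges (where two complex saddles coalesce on the real axis) will constitute most of the technical work; the remaining ingredients---the Maya-diagram bijection and the determinantal variance estimate---are standard.
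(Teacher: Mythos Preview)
Your high-level strategy---embed $\{\ell_j\}$ as the diagonal $\{x^j_j\}$ of the two-dimensional Schur process and exploit its determinantal kernel---is exactly the route the paper takes. However, two steps in your execution do not go through as written, and the paper handles them differently.

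\medskip

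\textbf{The ``diagonal-slice'' saddle-point analysis is circular.} You propose to do steepest descent on the kernel ``restricted to the diagonal'' to extract a one-variable density $\rho(u)=\tfrac1\pi\arg z_\ast(u)$. But the diagonal $\{x^j_j\}$ is not a spatial slice of the 2D process at all: $x^j_j$ is the $j$-th ranked particle at time $j$. The kernel $K^{2d}(a,t;a,t)$ gives the particle density at \emph{location} $a$ and \emph{time} $t$; it knows nothing about rank. To evaluate it ``on the diagonal'' you would need to know, for each $\tau$, the location $\xi=\mathfrak{L}(\tau)$ at which the $\tau N$-th particle sits---which is precisely the limit shape you are trying to compute. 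The phase function of the kernel genuinely depends on two scaled variables $(\xi,\tau)$, and there is no consistent way to collapse it to a single parameter $u$ before solving the problem. The paper avoids this by first obtaining the full two-dimensional limit surface $\mathfrak{H}(\xi,\tau)$ via the standard steepest descent (\Cref{thm:limit_shape_Schur}), and only then extracting the Grothendieck shape as the cross-section $\mathfrak{H}(\mathfrak{L}(\tau),\tau)=\tau$ (equation~\eqref{eq:functional_equation}); see \Cref{lemma:uniquely} for why this implicit equation has a unique solution.

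\medskip

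\textbf{The variance bound does not apply to $\{\ell_j\}$.} You invoke $\mathrm{Var}(\#_A)\le \mathbb{E}\,\#_A$ for determinantal processes, but the one-dimensional process $\{\ell_j\}$ is \emph{not} determinantal---that is exactly \Cref{thm:non_determinantal_intro}---and being a marginal of a determinantal process does not transfer this inequality. Concretely, $\#\{j:\ell_j\ge M\}=\#\{j:H_N(M,j)\ge j\}$ is a nonlinear functional of the 2D configuration, not a point count in a fixed window. The paper's remedy is again to work upstairs: for each fixed $(a,t)$ the height function $H_N(a,t)$ \emph{is} a linear count of the 2D determinantal process, so it concentrates; combined with the strict monotonicity of $\mathfrak{H}(\cdot,\tau)$ in $\xi$ (proved in \Cref{lemma:uniquely}), this yields concentration of $x^t_t/N$ and hence of $\lambda_{\lfloor\tau N\rfloor}/N$, which is \Cref{thm:Groth_measures_convergence}. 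The rotated limit shape $\mathfrak{W}(u)$ is then read off parametrically via \eqref{eq:W_curve_parametric_form}.
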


\begin{figure}[htpb]
	\centering
	\includegraphics[width=.45\textwidth]{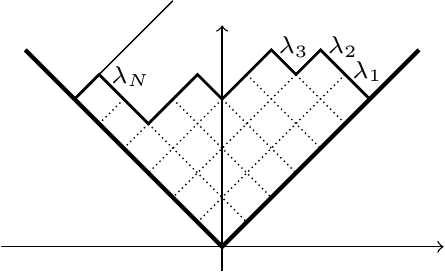}
	\qquad 
	\includegraphics[width=.45\textwidth]{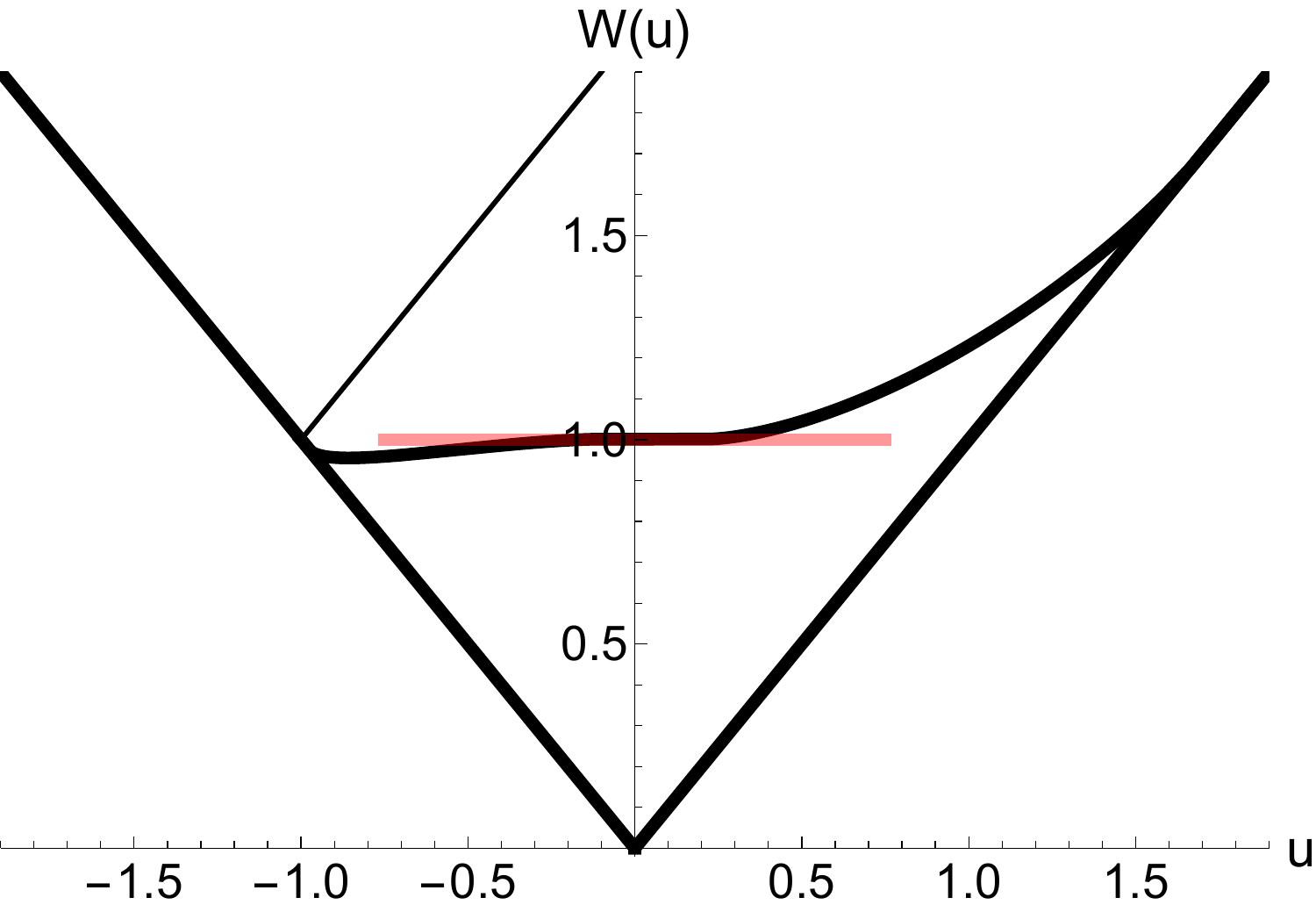}
	\caption{Left: The Young diagram for $\lambda=(6,6,5,3,1,1)$
	in the coordinate system rotated by~$45^\circ$. 
	The diagonal line $v=u+2N$ represents the upper boundary of the shape of $\lambda$.
	Right: 
	An example of a limit shape $\mathfrak{W}(u)$ of the Grothendieck random partition 
	for $x=1/3$, $y=1/5$, and $\beta=-25$. 
	We added a horizontal line to highlight the staircase frozen facet where the
	limit shape $\mathfrak{W}(u)$ is horizontal. An exact sample of a random partition corresponding to the 
	limit shape on the right is given in \Cref{fig:samples}, right (see also \Cref{sub:sampling} for a discussion of how to 
	sample Grothendieck random partitions).}
	\label{fig:YD_45_and_limsh}
\end{figure}

The first limit shape result for random partitions
(with Plancherel measure, which is a particular case of Schur measures)
was obtained by 
Logan--Shepp \cite{logan_shepp1977variational} and
Vershik--Kerov
\cite{VershikKerov_LimShape1077}.
We do not have an analytic formula for our shapes $\mathfrak{W}(u)$
in contrast to this classical VKLS shape.
Let us briefly describe how $\mathfrak{W}(u)$ is related to the 
limit surface of the Schur process. We used this connection to 
numerically plot all our examples; see \Cref{sub:simulations_and_particular_cases} for details and more discussion.

Let 
$\{x^m_j\colon 1\le m,j\le N \}$ be distributed according to the Schur process
as in \eqref{eq:2d_process_intro}. Define the 
height function
$H_N(a,t)\coloneqq \#\{j\colon x^t_j\ge a \}$, where $(a,t)\in \mathbb{Z}_{\ge0}\times\left\{ 1,\ldots,N  \right\}$.
Using the standard steepest descent analysis of the correlation kernel 
of the Schur process (dating back to \cite{okounkov2003correlation},
see also \cite[Section 3]{Okounkov2002}), one can show that $H_N$ 
has a limit shape
$\mathfrak{H}(\xi,\tau)
=
\lim\limits_{N\to\infty}N^{-1}\ssp H_N(\lfloor \xi N \rfloor ,\lfloor \tau N \rfloor )$,
where $(\xi,\tau)\in \mathbb{R}_{\ge0} \times[0,1]$.
The gradient of $\mathfrak{H}$ is expressed through 
arguments of the complex root $z_c=z_c(\xi,\tau)$
of a certain cubic equation depending on $(\xi,\tau)$ and our parameters $(x,y,\beta)$, see
\eqref{eq:cubic_equation_z}
and 
\eqref{eq:gradient_via_angles}
for the formulas.

The identification between Grothendieck random partitions and the slice 
$(x_1^1>\ldots>x^N_N )$ of the Schur process (see \Cref{sub:tilted_intro} above)
helps to express the Grothendieck limit shape $\mathfrak{W}(u)$ through $\mathfrak{H}(\xi,\tau)$.
Namely, let 
$\mathfrak{L}(\tau)$ be an auxiliary function defined from the implicit equation
\begin{equation}
	\label{eq:intro_implicit_eq}
	\mathfrak{H}
	\left( 
	\mathfrak{L}(\tau),\tau
	\right)
	=\tau \quad \textnormal{for all }\tau\in[0,1].
\end{equation}
In other words, the three-dimensional parametric curve $(\mathfrak{L}(\tau),\tau,\tau)$ is the cross-section of the
Schur process limit shape surface $\eta=\mathfrak{H}(\xi,\tau)$
in the $(\xi,\tau,\eta)$ coordinates by the plane $\eta=\tau$.
From the Schur process limit shape result,
we have $\mathfrak{L}(\tau)=1-\tau+\lim\limits_{N\to\infty}N^{-1}\lambda_{\lfloor N\tau \rfloor }$
(the shift by $1-\tau$ comes from $\ell_j=\lambda_j+N-j$, see \eqref{eq:biorthogonal_ensembles_tilted_intro}).
Then the Grothendieck limit shape curve $(u,\mathfrak{W}(u))$ 
as in \Cref{fig:YD_45_and_limsh}, right, has the following parametrization
through $\mathfrak{L}(\tau)$:
\begin{equation*}
	u=\mathfrak{L}(\tau)-1
	,\qquad 
	\mathfrak{W}=\mathfrak{L}(\tau)-1+2\tau.
\end{equation*}

The functions $\mathfrak{L}(\tau)$ and $\mathfrak{W}(u)$
satisfy differential equations 
involving the root $z_c(\xi,\tau)$,
see \eqref{eq:diff_eq_for_L}--\eqref{eq:diff_eq_for_W}.
However, the implicit equation
\eqref{eq:intro_implicit_eq} turns out to be more convenient for plotting the shapes.

The flat, ``frozen'', facets of the Schur limit shape surface 
(where the gradient is at a vertex of its allowed triangle,
see \eqref{eq:gradient_triangle_condition})
lead to the three possible flat facets of $\mathfrak{W}(u)$, where
$\mathfrak{W}'(u)$
is equal to $-1,0$, or $1$, respectively. The derivatives $\pm1$
occur when 
$\mathfrak{W}(u)=|u|$ outside of the curved part of the limit shape.
The facet $\mathfrak{W}'(u)=0$ always
arises for sufficiently negative~$\beta$ (\Cref{lemma:large_negative_beta_always_zone_II}).
In this facet, the random partition develops the deterministic
\emph{frozen staircase} behavior, that is, $\lambda_i=\lambda_{i+1}+1$
for all $i$ in some interval of order~$N$.
See
the horizontal part of the limit shape in \Cref{fig:YD_45_and_limsh}, right.

\medskip

Besides limit shapes, the study of random partitions often involves fluctuations in various regimes (at the edge, in the bulk, and global Gaussian fluctuations). It would be interesting to obtain fluctuation results for Grothendieck random partitions in these regimes and compare them to the classical case of the Plancherel random partitions \cite{baik1999distribution}, \cite{Johansson1999}, \cite{Borodin2000b}, \cite{ivanov2002kerov}. The tilted nature of the cross-section leading to Grothendieck measures seems to be affecting all Grothendieck fluctuations except the edge ones. Indeed, for any fixed $k$, $(\lambda_1,\ldots,\lambda_k)$ have the same joint distribution as $(\mu^1_1,\ldots,\mu^k_k )$, where the partitions $\mu^1,\ldots,\mu^k $ for a Schur process. Moreover, we have $|\mu^j_j-\mu^1_j|\le j$ for all $j$ (see \Cref{subsub:Plancherel_reduction} for details). Therefore, we expect that the joint distribution of $(\lambda_j-cN)/(\sigma N^{1/3})$, $j=1,\ldots,k$, should converge to the Airy$_2$ point process, just like for the Plancherel measure. We also expect that the bulk fluctuations are not given by the same discrete sine process as in the Plancherel case. It would be interesting to compute the correlations of the Gaussian limit, and compare them to the Plancherel case.

\subsection{Outline}

In \Cref{sec:tilted_biorthogonal}, we introduce the framework of tilted biorthogonal ensembles and show that they are cross-sections of two-dimensional determinantal processes. The correlation kernel of the latter is given by the Eynard--Mehta theorem. In \Cref{sec:Grothendieck_measures_definition_discussion}, we specialize tilted biorthogonal ensembles to Grothendieck measures on partitions and write down the correlation kernel of the corresponding two-dimensional Schur process in a double contour integral form (specializing the results of \cite{okounkov2003correlation}). In \Cref{sec:non_determinantal}, we prove \Cref{thm:non_determinantal_intro} that Grothendieck measures are not determinantal point processes. \Cref{sub:history} provides a brief historical account of the relation between the determinantal structure of probability measures and the principal minor assignment problem. Finally, in \Cref{sec:limit_shape}, we establish limit shape results for Schur processes and Grothendieck random partitions and illustrate these results by several plots and exact sampling simulations.

\subsection{Acknowledgments}

We are grateful to Richard Kenyon, Grigori Olshanski, Bernd Sturmfels, Alexander Povolotsky, and Damir Yeliussizov for helpful discussions at various stages of the project. LP is grateful to Promit Ghosal for bringing attention to Grothendieck measures in 2018. We appreciate the comments from the anonymous reviewer, which have led to the discussion of sampling of Grothendieck random partitions in \Cref{sub:sampling}.

The first author is partially supported by International Laboratory of Cluster Geometry, NRU HSE, 
ag. number 075-15-2021-608.
The work of the second author was partially supported by the
NSF grants DMS-1664617 and DMS-2153869 as well as by
DMS-1928930 (in connection with the program ``Universality
and Integrability in Random Matrix Theory and Interacting
Particle Systems'' hosted by the Mathematical Sciences
Research Institute in Berkeley, California, during the Fall
2021 semester) and by the Simons Collaboration Grant for
Mathematicians 709055.

\section{Tilted biorthogonal ensembles}
\label{sec:tilted_biorthogonal}

In this section, we present the main framework 
for measures 
on particle configurations in $\mathbb{Z}_{\ge0}$
given by a certain product of determinants,
and discuss their characteristics.

\subsection{Definition of the ensemble}
\label{sub:def_tilted_biorthogonal}

Fix $N$, and let $\Phi_k$, $\Psi_k$, $k=1,\ldots,N $,
be arbitrary complex-valued functions on $\mathbb{Z}_{\ge0}$.
Fix additional complex parameters $\beta_1,\beta_2,\ldots,\beta_{N-1}$. 
Let us define the following operators acting on finitely supported functions on $\mathbb{Z}_{\ge0}$:
\begin{equation} 
	\label{eq:def_operators_D}
	D_k^{(r)}f(k)\coloneqq f(k)-\beta_r f(k+1),\qquad 
	D_k^{(r)\dagger}f(k)\coloneqq f(k)-\beta_r f(k-1)\ssp \mathbf{1}_{k\ge 1},
\end{equation}
where $r=1,\ldots,N-1 $.
These operators are conjugate
to each other with respect to the bilinear form 
$\sum_{k=0}^{\infty}f(k)g(k)$ on finitely supported functions on $\mathbb{Z}_{\ge0}$.
Denote 
\begin{equation}
	\label{eq:D_chains}
	D_\ell^{[a,b)}\coloneqq D_{\ell}^{(a )}D_{\ell}^{(a+1 )}\ldots 
	D_{\ell}^{( b-1 )},
\end{equation}
and similarly for 
other types of segments and the conjugate operators
$D_\ell^{[a,b)\dagger}$. 
Clearly, $D_{\ell}^{[1,1)}$ is the identity
operator.

Assign the following weights to $N$-point configurations
on $\mathbb{Z}_{\ge0}$:
\begin{equation}
	\label{eq:tilted_biorthogonal}
	\mathscr{W}_{\vec \beta}(X)\coloneqq
	\det\bigl[ D_{\ell_j}^{[1,j)}\Phi_i(\ell_j) \bigr]_{i,j=1}^{N}
	\det\bigl[ D_{\ell_j}^{[j,N)\dagger}\Psi_i(\ell_j) \bigr]_{i,j=1}^{N},
\end{equation}
where 
$X=(\ell_1> \ell_2> \ldots> \ell_N\ge0 )$.
If the number of points in $X$ is not $N$, then set $\mathscr{W}_{\vec\beta}(X)=0$.
Assume that the series for the partition function for the weights
\eqref{eq:tilted_biorthogonal}, 
\begin{equation*}
	\mathscr{Z}_{\vec \beta}\coloneqq \sum_{X=(\ell_1> \ell_2> \ldots> \ell_N\ge0 )} \mathscr{W}_{\vec \beta}(X)
\end{equation*}
converges and is nonzero.\footnote{Throughout this section (which discusses abstract ensembles) 
we assume that all similar infinite series converge.}

\begin{definition}
	\label{def:tilted_biorthogonal}
	The normalized weights 
	\begin{equation}
		\label{eq:M_normalized_1d_weights}
		\mathscr{M}_{\vec\beta}(X)
		=
		\mathscr{W}_{\vec \beta}(X)/\mathscr{Z}_{\vec \beta}
	\end{equation}
	define a probability
	measure on $N$-particle configurations on $\mathbb{Z}_{\ge0}$.
	We call this measure the \emph{$\vec \beta$-tilted $N$-point biorthogonal ensemble}.
\end{definition}

The term ``probability measure'' here refers to the fact that the sum of the normalized weights is equal to $1$. The weights are generally complex-valued but become nonnegative real numbers in the specializations we discuss later.

When $\beta_j\equiv 0$, the operators \eqref{eq:def_operators_D} become identity operators, and the tilted biorthogonal ensemble turns into the usual biorthogonal ensemble with probability weights proportional to
\begin{equation}
	\label{eq:biorthogonal_ensembles_classical}
	\det\bigl[ \Phi_i(\ell_j) \bigr]_{i,j=1}^{N}
	\det\bigl[ \Psi_i(\ell_j) \bigr]_{i,j=1}^{N}.
\end{equation}
Biorthogonal ensembles were introduced and studied
in \cite{Borodin1998b}, see also \cite[Section 4]{borodin2009} for a summary of 
formulas.

\subsection{Normalization}
\label{sub:normalization}

Let us compute the normalizing constant $\mathscr{Z}_{\vec\beta}$:

\begin{proposition}
	\label{prop:partition_function_tilted}
	We have 
	\begin{equation}
	\label{eq:tilted_Gram_matrix_partition_function}
		\mathscr{Z}_{\vec \beta}=\det\bigl[ G_{ij}(\vec\beta  ) \bigr]_{i,j=1}^{N},
	\end{equation}
	where
	\begin{equation}
		\label{eq:tilted_Gram_matrix_definition}
		G_{ij}(\vec \beta )\coloneqq
		\sum_{k=0}^\infty
		\Psi_j(k)\ssp
		D_k^{[1,N)}\Phi_i(k).
	\end{equation}
\end{proposition}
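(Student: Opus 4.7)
My plan is to apply Andreief's identity after moving all the difference operators to one side using the conjugacy of $D$ and $D^\dagger$.

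First, because the operator $D_{\ell_j}^{[1,j)}$ acts only on the variable $\ell_j$, which appears only in the $j$-th column of the first determinant, multilinearity gives
\[
	\det\bigl[D_{\ell_j}^{[1,j)}\Phi_i(\ell_j)\bigr]_{i,j} = \mathcal{A}\,F(\vec\ell), \qquad F(\vec\ell)\coloneqq\det[\Phi_i(\ell_j)]_{i,j},
\]
where $\mathcal{A}=\prod_{j=2}^{N} D_{\ell_j}^{[1,j)}$; similarly $\det\bigl[D_{\ell_j}^{[j,N)\dagger}\Psi_i(\ell_j)\bigr]_{i,j} = \mathcal{B}\,G(\vec\ell)$ with $G(\vec\ell)\coloneqq\det[\Psi_i(\ell_j)]_{i,j}$ and $\mathcal{B}=\prod_{j=1}^{N-1} D_{\ell_j}^{[j,N)\dagger}$. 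Both $F$ and $G$ are antisymmetric in $\vec\ell$.

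Next, using the conjugacy $\sum_k (Df)(k)g(k)=\sum_k f(k)(D^\dagger g)(k)$ together with the commutativity of the elementary operators $D^{(r)}$ (so that $D^{[1,j)\dagger}D^{[j,N)\dagger}=D^{[1,N)\dagger}$), the formal adjoint of $\mathcal{A}$ combines with $\mathcal{B}$ to yield $\mathcal{A}^*\mathcal{B}=\prod_{j=1}^N D_{\ell_j}^{[1,N)\dagger}$. I transfer all operators from $\mathcal{A}$ onto the $\mathcal{B}G$ factor, moving them one at a time on the ordered sector $\ell_1>\cdots>\ell_N\ge 0$. At each step, summation by parts produces boundary contributions at the coincidence hyperplanes $\ell_j=\ell_{j\pm 1}$ (and at $\ell_N=0$, absorbed by the $\mathbf{1}_{k\ge 1}$ factor in $D^\dagger$); these vanish provided the conjugation is sequenced so that the partially-conjugated base remains antisymmetric in the relevant pair of coordinates. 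The outcome is
\[
	\mathscr{Z}_{\vec\beta} = \sum_{\ell_1>\cdots>\ell_N\ge 0} \det[\Phi_i(\ell_j)]_{i,j}\cdot \det\bigl[D^{[1,N)\dagger}\Psi_i(\ell_j)\bigr]_{i,j},
\]
a standard biorthogonal partition function; Andreief's identity plus one more use of conjugacy then give
\[
	\mathscr{Z}_{\vec\beta} = \det\Bigl[\,\textstyle\sum_{k\ge 0}\Phi_i(k)\,D^{[1,N)\dagger}\Psi_j(k)\,\Bigr]_{i,j} = \det\Bigl[\,\sum_k D^{[1,N)}\Phi_i(k)\,\Psi_j(k)\,\Bigr]_{i,j} = \det[G_{ij}].
\]

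The main obstacle is the boundary-term analysis: on the ordered domain, summation by parts at each individual operator move produces boundary contributions that do not automatically vanish because the partially-conjugated base is no longer antisymmetric in all variables simultaneously, so the order in which operators are moved across must be chosen with care. A conceptually cleaner alternative (developed in the next part of the section; cf.\ \Cref{thm:properties_of_W_2d}) is to identify $\mathscr{W}_{\vec\beta}(\vec\ell)$ as the diagonal cross-section of a 2D Schur-type weight $\det[\Phi_i(x^1_j)]\prod_{t}\det[T_{\beta_t}(x^t_i,x^{t+1}_j)]\det[\Psi_i(x^N_j)]$, so that $\mathscr{Z}_{\vec\beta}$ equals the total 2D partition function. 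Iterated Cauchy--Binet then evaluates $\det(\Phi\cdot T_{\beta_1}\cdots T_{\beta_{N-1}}\cdot\Psi^T)$, whose $(i,j)$-entry unfolds to $\sum_k D^{[1,N)}\Phi_i(k)\Psi_j(k)=G_{ij}$, bypassing the boundary-term issue entirely.
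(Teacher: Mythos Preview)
Your approach is essentially the same as the paper's: both factor the operators out of the determinants, use summation by parts on the ordered sector to consolidate all of them on one side (the paper moves the $D^{\dagger}$'s from $\Psi$ to $\Phi$, you move the $D$'s from $\Phi$ to $\Psi$---mirror images), and then apply Cauchy--Binet/Andr\'eief. Your flagged ``obstacle'' about sequencing the moves so that the relevant pair of columns stays antisymmetric is exactly the point the paper elides with the phrase ``the presence of the determinants eliminates the boundary terms''; a concrete order that works is to process the superscripts $r=1,\ldots,N-1$ and, within each $r$, the variables $\ell_r,\ell_{r-1},\ldots,\ell_1$ (or the mirror of this in your direction), and your proposed 2D Cauchy--Binet workaround is correct and anticipates \Cref{thm:properties_of_W_2d}.
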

\begin{proof}
	Observe that for any $0\le a\le b$, we have the following summation by parts:
	\begin{equation}
		\label{eq:boundary_terms_summation_by_parts}
		\sum_{k=a}^b f(k)\ssp D^{(r)\dagger}_k g(k)-
		\sum_{k=a}^b g(k)\ssp D^{(r)}_kf(k) =
		\beta_r g(b)\ssp f(b+1) - \beta_r f(a)\ssp g(a-1)\ssp \mathbf{1}_{a\ge 1}.
	\end{equation}
	We have
	\begin{equation*}
		\begin{split}
			\mathscr{Z}_{\vec \beta}
			&=
			\sum_{\ell_1>\ell_2>\ldots>\ell_N\ge0 }
			\Bigl(D_{\ell_1}^{[1,1)}
			\ldots 
			D_{\ell_N}^{[1,N)}
			\det\bigl[ \Phi_i(\ell_j) \bigr]_{i,j=1}^{N}
			\Bigr)\Bigl(
			D_{\ell_N}^{[N,N)\dagger}\ldots 
			D_{\ell_1}^{[1,N)\dagger}
			\det\bigl[ \Psi_i(\ell_j) \bigr]_{i,j=1}^{N}
			\Bigr)
			\\&=
			\sum_{\ell_1>\ell_2>\ldots>\ell_N\ge0 }
			\det\bigl[ \Psi_i(\ell_j) \bigr]_{i,j=1}^{N}
			\,
			D_{\ell_1}^{[1,N)}
			D_{\ell_2}^{[1,N)}
			\ldots 
			D_{\ell_N}^{[1,N)}
			\det\bigl[ \Phi_i(\ell_j) \bigr]_{i,j=1}^{N},
		\end{split}
	\end{equation*}
	where we moved each of the operators $D_{\ell_j}^{[j,N)\dagger}$ to the other function and observed that the presence of the determinants eliminates the boundary terms arising from \eqref{eq:boundary_terms_summation_by_parts}.
	Writing 
	\begin{equation*}
		D_{\ell_1}^{[1,N)}
		D_{\ell_2}^{[1,N)}
		\ldots 
		D_{\ell_N}^{[1,N)}
		\det\bigl[ \Phi_i(\ell_j) \bigr]_{i,j=1}^{N}
		=
		\det\Bigl[
			D_{k}^{[1,N)}\Phi_i(k)\ssp\Big\vert_{k=\ell_j} \ssp
		\Bigr]_{i,j=1}^{N},
	\end{equation*}
	we can use the Cauchy--Binet summation to replace the sum of products of two determinants over $\ell_1>\ell_2>\ldots>\ell_N\ge 0 $ by the determinant of single sums.
\end{proof}

\subsection{Two-dimensional process}
\label{sub:tilted_biorthogonal_process}

We will show in \Cref{sec:non_determinantal}
below that a $\vec\beta$-tilted
$N$-point biorthogonal ensemble on $\mathbb{Z}_{\ge0}$ is not necessarily a determinantal point process, even though its probability weights are products of determinants.

On the other hand, 
each $\vec \beta$-tilted biorthogonal
ensemble can be embedded into a \emph{two-dimensional}
determinantal point process 
on $\mathfrak{X}\coloneqq \mathbb{Z}_{\ge0}\times\left\{ 1,\ldots,N  \right\}$.
A similar construction for TASEP first appeared in
\cite{BorodinFPS2007} (and was later exploited 
to construct the KPZ fixed point \cite{matetski2017kpz}).
The embedding which we describe below in this subsection
is suggested in the talk by Kenyon
\cite{Kenyon_IPAM_talk}.

This process lives on particle configurations
$X^{2d}=\{x^m_j \colon 1\le m,j\le N \}$ satisfying
\begin{equation}
	\label{eq:2d_particle_configuration}
	x^m_N<x^m_{N-1}<\ldots<x^m_2<x^m_1,\qquad 1\le m\le N.
\end{equation}
Denote $|x^m|\coloneqq x^m_1+\ldots+x^m_N $.
Let
\begin{equation*}
	T_\beta(x,y)\coloneqq \mathbf{1}_{y=x}-\beta \ssp \mathbf{1}_{y=x-1},\qquad x,y\in \mathbb{Z}_{\ge0}.
\end{equation*}
One readily sees that
\begin{equation}
	\label{eq:T_beta_determinant_evaluation}
	\det[T_\beta(x^m_i,x^{m+1}_j)]_{i,j=1}^N=
	(-\beta)^{|x^m|-|x^{m+1}|}
	\prod_{j=1}^{N}\mathbf{1}_{x^{m}_j-x^{m+1}_j=\ssp0\text{ or $1$}}.
\end{equation}

Using the given notation, assign (possibly complex) weights to configurations $X^{2d}$:
\begin{equation}
	\label{eq:2d_unnormalized_weights}
	\mathscr{W}_{\vec \beta}^{2d}(X^{2d})\coloneqq
	\det\left[ \Phi_i(x^1_j) \right]_{i,j=1}^N
	\biggl(\ssp
		\prod_{m=1}^{N-1}
		\det\left[ T_{\beta_m}(x^m_i,x^{m+1}_j) \right]_{i,j=1}^{N}
	\biggr)
	\det\left[ \Psi_i(x^{N}_j) \right]_{i,j=1}^N.
\end{equation}

In the proof of the next statement and 
throughout the rest of the section, we use the notation
``$*$'' for discrete convolution of functions on $\mathbb{Z}_{\ge0}$,
and assume that all series thus arising converge absolutely.
For example, we write
$(f*h)(x)=\sum_{y=0}^{\infty}f(x,y)h(y)$
for functions $f(x,y)$ and $h(x)$.
See also \cite[Section 4]{borodin2009} for further examples of this notation.

\begin{theorem}
	\label{thm:properties_of_W_2d}
	The normalizing constant of the two-dimensional distribution
	\begin{equation*}
		\mathscr{Z}_{\vec \beta}^{2d}\coloneqq \sum_{X^{2d}}
		\mathscr{W}_{\vec \beta}^{2d}(X^{2d})
	\end{equation*}
	is equal to the one-dimensional normalizing constant
	$\mathscr{Z}_{\vec \beta}$ given by 
	\eqref{eq:tilted_Gram_matrix_partition_function}--\eqref{eq:tilted_Gram_matrix_definition}.
	Moreover, under the 
	normalized two-dimensional probability distribution 
	$\mathscr{M}^{2d}_{\vec \beta}(X^{2d})\coloneqq
	\mathscr{W}_{\vec \beta}^{2d}(X^{2d})/\mathscr{Z}_{\vec \beta}$,
	the marginal distribution of $x^1_1>x^2_2>\ldots>x^N_N $ coincides with that of
	$\ell_1>\ell_2>\ldots\ell_N $ under $\mathscr{M}_{\vec \beta}$.
\end{theorem}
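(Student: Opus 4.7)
My plan is to prove both assertions by iterated Cauchy--Binet, exploiting that the transfer matrix $T_{\beta_m}$ represents the operators $D^{(m)}$ and $D^{(m)\dagger}$ via $\sum_{k\ge0}A(k)\ssp T_{\beta_m}(k,x)=D^{(m)}A(x)$ and $\sum_{y\ge0}T_{\beta_m}(x,y)\ssp B(y)=D^{(m)\dagger}B(x)$.

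For $\mathscr{Z}^{2d}_{\vec\beta}=\mathscr{Z}_{\vec\beta}$, I would sum the weights \eqref{eq:2d_unnormalized_weights} one level at a time. Each step combines Cauchy--Binet with the kernel identity above to produce
\begin{equation*}
	\sum_{x^m_1>\cdots>x^m_N\ge0}\det\bigl[A_i(x^m_j)\bigr]_{i,j=1}^{N}\det\bigl[T_{\beta_m}(x^m_i,x^{m+1}_j)\bigr]_{i,j=1}^{N}=\det\bigl[D^{(m)}A_i(x^{m+1}_j)\bigr]_{i,j=1}^{N}.
\end{equation*}
Iterating from $A_i=\Phi_i$ through $m=1,\ldots,N-1$, the running determinant becomes $\det[D^{[1,m+1)}\Phi_i(x^{m+1}_j)]$; after all but the last level is summed out, one is left with $\det[D^{[1,N)}\Phi_i(x^N_j)]\det[\Psi_i(x^N_j)]$, and a final Cauchy--Binet yields $\det[G_{ij}(\vec\beta)]=\mathscr{Z}_{\vec\beta}$ by \Cref{prop:partition_function_tilted}.

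For the marginal distribution claim, after fixing $x^m_m=\ell_m$ on the diagonal, I would express both $\mathscr{W}_{\vec\beta}(\ell_1,\ldots,\ell_N)$ and the marginal weight $\sum_{\text{off-diag ordered}}\mathscr{W}^{2d}_{\vec\beta}(X^{2d})$ as sums of the same summand $\det[\Phi_i(x^1_j)]\det[\Psi_i(x^N_j)]\prod_{m=1}^{N-1}\prod_{j=1}^{N}T_{\beta_m}(x^m_j,x^{m+1}_j)$ over different index sets. On the $2d$ side, \eqref{eq:T_beta_determinant_evaluation} reduces $\det[T_{\beta_m}(x^m_i,x^{m+1}_j)]$ to $\prod_{j}T_{\beta_m}(x^m_j,x^{m+1}_j)$ on any ordered configuration, since any nonidentity permutation in the expansion would force two particles at the next level to coincide. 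On the $1d$ side, unfolding $D^{[1,j)}\Phi_i(\ell_j)$ and $D^{[j,N)\dagger}\Psi_i(\ell_j)$ as iterated path sums anchored at $(\ell_j,j)$ via the kernel identities, and then expanding the two determinants in $\mathscr{W}_{\vec\beta}$, yields the same summand summed over all off-diagonal tuples $(x^m_j)_{m\neq j}$ with no ordering constraint.

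Matching the ordered and unrestricted sums is the main obstacle, and takes the form of a Lindstr\"om--Gessel--Viennot cancellation. If two column paths $\pi_j,\pi_{j'}$ with $j<j'$ intersect at some level $m$, swapping their tails after level $m$ preserves the transfer-matrix product (the exchanged $T$ factors at the intersection collapse to the originals because $\pi_j(m)=\pi_{j'}(m)$) while flipping the sign of $\det[\Psi_i(x^N_j)]$ by exchanging columns $j$ and $j'$. The delicate point is that the intermediate anchor $\pi_{j'}(j')=\ell_{j'}$ is preserved by the swap only when $m\ge j'$, so one must always pair configurations along their latest intersection and exploit the monotonic structure of column paths to guarantee that every nonordered configuration has such a valid pairing. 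A cleaner but more laborious alternative is to iteratively apply Laplace expansion along the fixed diagonal column at each level before Cauchy--Binet on the remaining $(N-1)\times(N-1)$ minors, which sidesteps LGV altogether at the cost of intricate bookkeeping around the per-level ordering constraints.
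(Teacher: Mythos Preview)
Your treatment of the normalizing constant is correct and matches the paper exactly: iterated Cauchy--Binet plus the identity $(h*T_{\beta_r})(y)=D^{(r)}h(y)$ gives $\mathscr{Z}^{2d}_{\vec\beta}=\det[G_{ij}(\vec\beta)]$.

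For the marginal identity your route diverges from the paper's, and the LGV step as sketched has a real gap. Your reduction of both sides to the common summand $\det[\Phi_i(x^1_j)]\det[\Psi_i(x^N_j)]\prod_{m,j}T_{\beta_m}(x^m_j,x^{m+1}_j)$ with the diagonal anchors $x^j_j=\ell_j$ is correct, and the task is indeed to show that the non-ordered anchored configurations cancel. But ``swap tails at the latest intersection'' does not always stay inside the anchored index set. Take $N=3$, $\ell_2-\ell_3=1$, and the pair of column paths $\pi_2=(5,5,5)$, $\pi_3=(6,5,4)$ (with $\ell_2=5$, $\ell_3=4$): their only coincidence is at level $m^*=2<j'=3$, and the tail swap at level $2$ sends $\pi_3(3)$ to the old $\pi_2(3)=5\ne\ell_3$, destroying the anchor. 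The configuration that actually cancels it is obtained by a \emph{head} swap at level $2$, producing $\pi_2=(6,5,5)$, $\pi_3=(5,5,4)$, which flips $\det[\Phi_i(x^1_j)]$ instead of $\det[\Psi_i(x^N_j)]$. So the correct involution must choose the swap side according to where the coincidence sits relative to the two anchors, and when the coincidence lies strictly between them neither single swap suffices; your monotonicity remark does not resolve this, and the Laplace-expansion alternative is not developed.

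The paper avoids the whole issue with one line: both $\mathscr{W}_{\vec\beta}$ and $\mathscr{W}^{2d}_{\vec\beta}$ are multilinear in $(\Phi_1,\ldots,\Phi_N;\Psi_1,\ldots,\Psi_N)$, so it suffices to prove the marginal identity for delta functions $\Phi_i=\mathbf{1}_{\{k_i\}}$, $\Psi_i=\mathbf{1}_{\{k_i'\}}$ with $k_1>\cdots>k_N$, $k_1'>\cdots>k_N'$. In that specialization $X^{2d}$ is literally a nonintersecting path ensemble with \emph{fixed ordered endpoints}, and splitting each path at its diagonal point $(\ell_j,j)$ gives two standard LGV determinants (upper and lower halves) that are precisely the two factors in $\mathscr{W}_{\vec\beta}(\ell)$. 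The anchors are now boundary sinks/sources of a planar region rather than interior constraints, so the usual LGV involution applies without modification. This multilinearity reduction is the idea your argument is missing.
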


For complex-valued probabilities, the coincidence of marginal distributions means that for any finitely supported function $f$ in $N$ variables, we have
\begin{equation}
	\label{eq:X2d_marginal}
	\sum_{X^{2d}=\{x^m_j\}}
	f(x^1_1,\ldots,x^N_N )\ssp
	\mathscr{M}_{\vec \beta}^{2d}(X^{2d})
	=
	\sum_{X=(\ell_1>\ldots>\ell_N\ge0 )}
	f(\ell_1,\ldots,\ell_N )\ssp
	\mathscr{M}_{\vec \beta}(X).
\end{equation}

\begin{proof}[Proof of \Cref{thm:properties_of_W_2d}]
	By the Cauchy--Binet summation, we have
	\begin{equation*}
		\mathscr{Z}^{2d}_{\vec \beta}=
		\det\bigl[ \Phi_i * T_{\beta_1} * 
		\ldots * T_{\beta_{N-1}} * \Psi_j  \bigr]_{i,j=1}^{N}.
	\end{equation*}
	Next, for any function $h(y)$ on $\mathbb{Z}_{\ge0}$ we have
	$(h*T_{\beta_r})(y)=h(y)-\beta_r h(y+1)=D^{(r)}_y h(y)$.
	By \eqref{eq:tilted_Gram_matrix_definition},
	this implies the first claim about the normalizing constant.

	The second claim essentially follows from the LGV (Lindstrom--Gessel--Viennot) lemma, which expresses the partition function of nonintersecting path collections in a determinantal form
	\cite{lindstrom1973vector}, \cite{gessel1985binomial}.
	By the first claim, it suffices to prove 
	\eqref{eq:X2d_marginal}
	for unnormalized weights
	$\mathscr{W}^{2d}_{\vec \beta}$ and 
	$\mathscr{W}_{\vec \beta}$.
	Next, the weights
	$\mathscr{W}^{2d}_{\vec \beta}(X^{2d})$
	\eqref{eq:2d_unnormalized_weights}
	and 
	$\mathscr{W}_{\vec \beta}(X)$
	\eqref{eq:tilted_biorthogonal}
	are multilinear in 
	$(\Phi_1,\ldots,\Phi_N; \Psi_1,\ldots,\Psi_N )$,
	so it suffices to prove the summation identity
	in the case of delta functions
	\begin{equation*}
		\Phi_i(x)=\mathbf{1}_{x=k_i}
		,\qquad 
		\Psi_i(x)=\mathbf{1}_{x=k_i'}
		,
		\qquad 
		i=1,\ldots,N,
	\end{equation*}
	where $k_1>\ldots>k_N\ge0$ and $k_1'>\ldots>k_N'\ge0$ are arbitrary
	but fixed.
	With this choice of $\Phi_i,\Psi_i$, the distribution
	of $X^{2d}$ is the same as the distribution of the
	nonintersecting path ensemble on the graph shown in 
	\Cref{fig:nonintersecting_path_ensemble},
	where the paths connect $k_1,\ldots,k_N $ to 
	$k_1',\ldots,k_N' $.

	\begin{figure}[htpb]
		\centering
		\includegraphics[width=.4\textwidth]{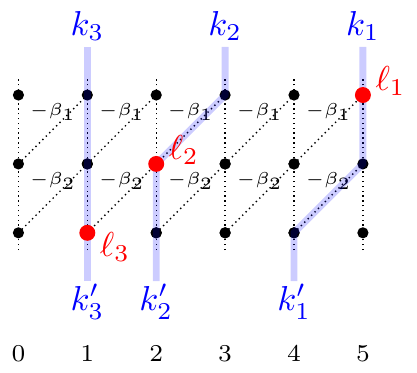}
		\caption{The directed
		graph with vertices $\mathbb{Z}_{\ge0}\times\left\{ 1,\ldots,N  \right\}$
		and edges which can be vertical (with weight $1$) or diagonal (with 
		weight $-\beta_m$, $m=1,\ldots,N $).
		We consider an ensemble of $N$ nonintersecting paths
		connecting $k_1,\ldots,k_N $ to $k_1',\ldots,k_N' $.
		The particles $x^m_j$ encode the intersections of the paths with 
		the  $m$-th horizontal line,
		$m=1,\ldots,N $.}
		\label{fig:nonintersecting_path_ensemble}
	\end{figure}

	Then the marginal distribution of $\ell_1,\ldots,\ell_N $
	can be expressed through the product of two determinants:
	One for the nonintersecting paths connecting $k_1,\ldots,k_N  $ to
	$\ell_1,\ldots,\ell_N $, and the other one for the nonintersecting
	paths from 
	$\ell_1,\ldots,\ell_N $
	to 
	$k_1',\ldots,k_N' $.
	These determinants are immediately
	identified with the two determinants in \eqref{eq:tilted_biorthogonal},
	and so we are done.
\end{proof}

\subsection{Determinantal kernel}
\label{eq:det_kernel}

The two-dimensional ensemble $X^{2d}$ defined 
in \Cref{sub:tilted_biorthogonal_process} is a \emph{determinantal
point process}. This means that
for any $p\ge1$ and pairwise
distinct points
$(y_i,t_i)\in \mathbb{Z}_{\ge0}\times\left\{ 1,\ldots,N  \right\}$, 
$i=1,\ldots,p $, we have
\begin{equation}
	\label{eq:Kernel_for_2d_definition}
	\sum_{\substack{X^{2d}\colon\text{$X^{2d}$ cointains each}\\
	(y_i,t_i),\ i=1,\ldots,p}}
	\mathscr{M}^{2d}_{\vec \beta}(X^{2d})
	=\det
	\bigl[ K_{\vec\beta}^{2d}(y_i,t_i;y_j,t_j) \bigr]_{i,j=1}^{p}.
\end{equation}
Here $K_{\vec\beta}^{2d}(x,t;y,s)$ is a function called the \emph{correlation kernel}.
Both the determinantal structure and an expression for the correlation
kernel
follow from the well-known Eynard--Mehta
theorem 
\cite{eynard1998matrices}, 
\cite{borodin2005eynard}, 
see also \cite[Theorem 4.2]{borodin2009}. 

\begin{proposition}
	\label{prop:Eynard_Mehta_kernel}
	The correlation kernel \eqref{eq:Kernel_for_2d_definition} 
	for the point process $X^{2d}$ has the form
	\begin{equation}
		\label{eq:Kernel_Eynard_Mehta}
		K_{\vec\beta}^{2d}(x,t;y,s)=
		-\mathbf{1}_{t>s}
		\left( T_{\beta_s}*\ldots*T_{\beta_{t-1}}  \right)(y,x)
		+
		\sum_{i,j=1}^{N}
		G_{ji}^{-1}(\vec \beta)\cdot
		D_x^{[1,t)}
		\Phi_i(x)
		\cdot
		D_y^{[s,N)\dagger}\Psi_j(y).
	\end{equation}
\end{proposition}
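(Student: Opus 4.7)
The plan is to realize $\mathscr{W}^{2d}_{\vec\beta}$ as a textbook instance of the Eynard--Mehta framework and then translate the resulting kernel back into the $D$-operator notation. The weight \eqref{eq:2d_unnormalized_weights} is a product of determinants in which the leftmost determinant, $\det[\Phi_i(x^1_j)]$, plays the role of ``initial'' data at level $1$, the rightmost determinant, $\det[\Psi_i(x^N_j)]$, plays the role of ``final'' data at level $N$, and the middle determinants $\det[T_{\beta_m}(x^m_i,x^{m+1}_j)]$ are transition weights between consecutive levels. This is exactly the shape of measure to which \cite[Theorem 4.2]{borodin2009} applies.

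Applied directly, that theorem produces a correlation kernel of the form
\begin{equation*}
	K^{2d}_{\vec\beta}(x,t;y,s)=-\mathbf{1}_{t>s}\ssp(T_{\beta_s}*\cdots*T_{\beta_{t-1}})(y,x)+\sum_{i,j=1}^N[\mathcal{G}^{-1}]_{ji}\ssp F_i^{(t)}(x)\ssp H_j^{(s)}(y),
\end{equation*}
where $F_i^{(t)}(x)=(\Phi_i*T_{\beta_1}*\cdots*T_{\beta_{t-1}})(x)$, $H_j^{(s)}(y)=(T_{\beta_s}*\cdots*T_{\beta_{N-1}}*\Psi_j)(y)$, and the Gram matrix is $\mathcal{G}_{ij}=(\Phi_i*T_{\beta_1}*\cdots*T_{\beta_{N-1}}*\Psi_j)$. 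Invertibility of $\mathcal{G}$ is guaranteed by $\mathscr{Z}^{2d}_{\vec\beta}\ne 0$, already arranged in \Cref{sub:def_tilted_biorthogonal}.

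Next I would translate each convolution into the $D$-operator language, using the identities already exploited in the proof of \Cref{prop:partition_function_tilted}. Acting on the right, $(f*T_{\beta_r})(x)=f(x)-\beta_r f(x+1)=D^{(r)}_x f(x)$, so by iteration $F_i^{(t)}(x)=D^{[1,t)}_x\Phi_i(x)$. Acting on the left, $(T_{\beta_r}*g)(y)=\sum_{z\ge 0}T_{\beta_r}(y,z)g(z)=g(y)-\beta_r g(y-1)\mathbf{1}_{y\ge 1}=D^{(r)\dagger}_y g(y)$ (the indicator reflects that $g$ is supported on $\mathbb{Z}_{\ge 0}$), so $H_j^{(s)}(y)=D^{[s,N)\dagger}_y\Psi_j(y)$. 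The same identities identify $\mathcal{G}_{ij}$ with the Gram matrix $G_{ij}(\vec\beta)$ of \eqref{eq:tilted_Gram_matrix_definition}. Substituting all three identifications into the Eynard--Mehta expression yields \eqref{eq:Kernel_Eynard_Mehta}.

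The only step requiring real care is the left/right directionality of the convolutions: the initial side $\Phi_i$ is convolved on the right by $T_{\beta_r}$'s and so produces $D^{(r)}$; the final side $\Psi_j$ is convolved on the left and so produces the adjoint $D^{(r)\dagger}$, with the boundary indicator $\mathbf{1}_{y\ge 1}$ arising because $\Psi_j$ lives on $\mathbb{Z}_{\ge 0}$. Keeping this bookkeeping straight, and verifying that the conventions of \cite[Theorem 4.2]{borodin2009} place the arguments of $(T_{\beta_s}*\cdots*T_{\beta_{t-1}})(y,x)$ in the correct order $(y,x)$ with the factor $-\mathbf{1}_{t>s}$, is the main (though routine) obstacle.
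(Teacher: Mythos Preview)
Your proposal is correct and follows essentially the same route as the paper: apply \cite[Theorem~4.2]{borodin2009} to the product-of-determinants weight \eqref{eq:2d_unnormalized_weights}, then rewrite the resulting convolutions $\Phi_i*T_{\beta_1}*\cdots*T_{\beta_{t-1}}$ and $T_{\beta_s}*\cdots*T_{\beta_{N-1}}*\Psi_j$ as $D^{[1,t)}_x\Phi_i$ and $D^{[s,N)\dagger}_y\Psi_j$ respectively. One minor cross-reference slip: the convolution identity $(h*T_{\beta_r})(y)=D^{(r)}_y h(y)$ is established in the proof of \Cref{thm:properties_of_W_2d}, not \Cref{prop:partition_function_tilted}.
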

\begin{proof}
	By \cite[Theorem 4.2]{borodin2009}, the correlation kernel has the form
	\begin{multline*}
		K_{\vec\beta}^{2d}(x,t;y,s)=
		-\mathbf{1}_{t>s}
		\left( T_{\beta_s}*T_{\beta_{s+1}}*\ldots*T_{\beta_{t-1}}  \right)(y,x)
		\\+
		\sum_{i,j=1}^{N}
		G_{ji}^{-1}(\vec \beta)\cdot
		\bigl( \Phi_i*T_{\beta_1}*\ldots* T_{\beta_{t-1}}  \bigr)(x)
		\cdot
		\bigl( T_{\beta_s}*\ldots* T_{\beta_{N-1}}*\Psi_j  \bigr)(y).
	\end{multline*}
	As in the proof of \Cref{thm:properties_of_W_2d},
	we can rewrite the convolutions with the $T_\beta$'s as 
	applications of the difference operators \eqref{eq:def_operators_D}:
	\begin{equation*}
		\bigl(\Phi_i*T_{\beta_1}*\ldots* T_{\beta_{t-1}}\bigr)(x) 
		= 
		D_x^{[1,t)}
		\Phi_i(x);\qquad 
		\bigl( T_{\beta_s}*\ldots* T_{\beta_{N-1}}*\Psi_j  \bigr)(y)
		=D_y^{[s,N)\dagger}\Psi_j(y).
	\end{equation*}
	This completes the proof.
\end{proof}

Note that the variables $t,s\in \left\{ 1,\ldots,N  \right\}$
in the correlation kernel 
\eqref{eq:Kernel_Eynard_Mehta}
correspond to the vertical coordinates
in \Cref{fig:nonintersecting_path_ensemble}
which increases from top to bottom. We use this 
convention throughout the rest of the paper.

\begin{remark}
	\label{rmk:Expression_of_measure_via_K2d}
	From the fact that $x^j_j=\ell_j$, $j=1,\ldots,N $, as joint distributions
	(where the $x^j_j$'s come from $\mathscr{M}_{\vec \beta}^{2d}$ and the $\ell_j$'s come from 
	$\mathscr{M}_{\vec \beta}$),
	one may think that the probabilities 
	$\mathscr{M}_{\vec\beta}$ are expressed through the correlation kernel
	$K_{\vec\beta}^{2d}$ as 
	\begin{equation}
		\label{eq:measure_through_K_2d}
		\mathscr{M}_{\vec\beta}(\ell_1,\ldots,\ell_N )=
		\det
		\bigl[
			K_{\vec\beta}^{2d}(\ell_i,i;\ell_j,j)
		\bigr]_{i,j=1}^{N},\qquad \ell_1>\ldots>\ell_N.
	\end{equation}
	However, identity \eqref{eq:measure_through_K_2d}
	is generally \emph{false} when $\ell_{i+1}=\ell_i+1$ for some $i$.
	Indeed, this is because the correlation event 
	in the right-hand side of \eqref{eq:measure_through_K_2d}
	includes more configurations of nonintersecting paths (as in \Cref{fig:nonintersecting_path_ensemble})
	than just the ones with $x^j_j=\ell_j$ for all $j=1,\ldots,N $.
	One can check that if
	$\ell_j-\ell_{j+1}\ge 2$ for all $j=1,\ldots,N $,
	then identity \eqref{eq:measure_through_K_2d} holds.
\end{remark}

\subsection{Marginals and correlations of the tilted biorthogonal ensemble}
\label{sub:correlations_ensemble}

Fix $k\ge1$ and 
$\mathcal{I}=\{i_1<\ldots<i_k \}\subset\left\{ 1,\ldots,N  \right\}$.
Let $a_{\mathcal{I}}=(a_{i_1}>\ldots>a_{i_k}\ge0 )$ be a fixed integer vector,
and also let $X_{\mathcal{I}}=(\ell_{i_1}>\ldots>\ell_{i_k} \ge0)$ be a
random vector, which is a marginal of the 
$\vec \beta$-tilted biorthogonal ensemble
$\mathscr{M}_{\vec\beta}(X)$
defined by
\eqref{eq:tilted_biorthogonal}--\eqref{eq:M_normalized_1d_weights}.
Using \Cref{thm:properties_of_W_2d} and \Cref{prop:Eynard_Mehta_kernel},
we can express the probability 
$\mathscr{M}_{\vec \beta}(X_{\mathcal{I}}=a_{\mathcal{I}})$ 
through the 
correlation kernel $K^{2d}_{\vec\beta}$ in a polynomial way.

We use the following statement adapted to our space
$\mathfrak{X}=\mathbb{Z}_{\ge0}\times\left\{ 1,\ldots,N  \right\}$:
\begin{lemma}[{\cite[Theorem 2]{Soshnikov2000}}]
	\label{lemma:det_pp_generating_function}
	Fix
	a finite number of disjoint subsets of
	$\mathfrak{X}$
	and denote them by
	$B_1,\ldots,B_p$. 
	Let $B=B_1\cup \ldots\cup B_p $.
	For a determinantal
	point process on $\mathfrak{X}$ with kernel $K$,
	let $\#_{B_i}$ be the random number of points of the process
	which belong to $B_i$.
	Then
	we have the following identity of generating functions in $z_1,\ldots,z_p$:
	\begin{equation}
		\label{eq:det_pp_generating_function}
		\mathbb{E}
		\bigl( z_1^{\#_{B_1}}\ldots z_p^{\#_{B_p}} \bigr)=
		\det
		\Bigl( \mathbf{1}-\chi_{_B} \sum_{i=1}^{p}(1-z_i)\cdot K \cdot \chi_{_{B_i}} \Bigr),
	\end{equation}
	where $\mathbf{1}$ is the identity operator, in the right-hand side there is a Fredholm determinant, and $\chi_{_B},\chi_{_{B_i}}$ are the indicator functions of these subsets.
\end{lemma}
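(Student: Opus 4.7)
The plan is to verify the identity by matching the Taylor expansions of both sides in the substituted variables $u_i\coloneqq 1-z_i$, $i=1,\ldots,p$. The two key inputs are the defining property of a determinantal point process — its $n$-point correlation functions are $\rho_n(x_1,\ldots,x_n)=\det[K(x_a,x_b)]_{a,b=1}^{n}$ — and the standard Fredholm expansion of $\det(\mathbf{1}-L)$ as a series of principal minors of $L$. Once both sides are written as formal power series in $(u_1,\ldots,u_p)$ with coefficients that are sums of minors of $K$, they match term by term by multilinearity of the determinant.

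First I expand the left-hand side using the binomial theorem,
$$\mathbb{E}\prod_{i=1}^{p}(1-u_i)^{\#_{B_i}}=\sum_{\vec n\in \mathbb{Z}_{\ge0}^{p}}\prod_{i=1}^{p}\frac{(-u_i)^{n_i}}{n_i!}\cdot \mathbb{E}\prod_{i=1}^{p}\frac{\#_{B_i}!}{(\#_{B_i}-n_i)!},$$
and identify the factorial-moment expectation with the sum of the correlation function $\rho_{n}$, where $n=n_1+\ldots+n_p$, over ordered tuples whose first $n_1$ coordinates lie in $B_1$, next $n_2$ in $B_2$, and so on. For the determinantal process under consideration this is a sum of $n\times n$ minors of $K$ supported on the product $B_1^{n_1}\times\cdots\times B_p^{n_p}$.

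Next I expand the right-hand side using the Fredholm series
$$\det(\mathbf{1}-L)=\sum_{n\ge0}\frac{(-1)^{n}}{n!}\sum_{y_1,\ldots,y_n\in B}\det\bigl[L(y_a,y_b)\bigr]_{a,b=1}^{n},\qquad L=\chi_{_B}\sum_{i=1}^{p}u_i\cdot K\cdot \chi_{_{B_i}}.$$
Multilinearity of the determinant in its columns allows me to distribute each column among the $p$ summands defining $L$. Grouping the $n_i$ columns assigned to the $i$-th summand pulls out a factor $u_i^{n_i}\chi_{_{B_i}}$ and leaves an $n\times n$ minor of $K$ restricted to the corresponding product of $B_i$'s. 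The multinomial coefficient $n!/(n_1!\cdots n_p!)$ counting ways to partition the $n$ labeled columns then cancels the $1/n!$ from the Fredholm expansion and leaves $1/(n_1!\cdots n_p!)$, matching exactly the coefficient obtained from the left-hand side (after restoring the signs from $(-u_i)^{n_i}$ versus $(-1)^n u_i^{n_i}$, which agree since $n=\sum n_i$).

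The combinatorial matching is essentially formal; the main point needing justification is the convergence of both expansions on the same domain in the $z_i$'s, which requires $\chi_{_B} K\chi_{_B}$ acting on $\ell^{2}(B)$ to be trace class so that the Fredholm determinant is an entire function of the $u_i$'s, while the generating function on the left must converge in a neighborhood of $z_i=1$. This is the technical content of \cite{Soshnikov2000}. In all applications of this lemma in the present paper, the subsets $B_i\subset\mathfrak{X}$ will be taken finite, so that the operator in question is automatically finite-rank and all series terminate or converge absolutely, bypassing the trace-class subtlety entirely.
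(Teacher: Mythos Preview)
Your argument is correct and is precisely the standard derivation: expand the left-hand side in factorial moments via the binomial theorem, identify these with sums of correlation functions over $B_1^{n_1}\times\cdots\times B_p^{n_p}$, and match against the Fredholm series on the right using multilinearity in the columns and the multinomial count. The only step worth making slightly more explicit is that, because the $B_i$ are pairwise disjoint and their union is $B$, the column indicator $\sum_i u_i\chi_{_{B_i}}(y_b)$ picks out exactly one term for each $y_b\in B$, so $\det[L(y_a,y_b)]=\bigl(\prod_b u_{i(b)}\bigr)\det[K(y_a,y_b)]$ without any cross terms; your ``multilinearity in the columns'' reasoning yields the same conclusion.

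As for comparison with the paper: the paper does not give its own proof of this lemma but simply quotes it from \cite{Soshnikov2000}, noting afterwards that in all applications the sets $B_i$ are finite so that the Fredholm determinant reduces to an ordinary finite determinant. Your write-up therefore supplies strictly more than the paper does, and your closing paragraph on trace-class issues versus the finite-$B_i$ shortcut matches the paper's own remark.
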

In our applications, the sets $B_i$ will be finite, and thus the 
Fredholm determinants in 
\eqref{eq:det_pp_generating_function} are simply finite-dimensional determinants
of the corresponding block matrices. In general, the right-hand
side of \eqref{eq:det_pp_generating_function} is an infinite series,
see, for example, \cite[Remark 3]{Soshnikov2000}.

To illustrate the general formula of
\Cref{prop:correlations_tilted} below,
let us first look at the case $k=1$.
For fixed $a$ and $i$, the event $\ell_i=a$
is equivalent to $\#_{B_i(a)}=N-i$, $\#_{C_i(a)}=1$, 
where
\begin{equation}
	\label{eq:events_notation}
	B_i(a)\coloneqq \left\{ 0,1,\ldots,a-1 \right\}\times\left\{ i \right\},\qquad 
	C_i(a)\coloneqq \left\{ a \right\}\times\left\{ i \right\},
	\qquad 
	F_i(a)\coloneqq B_i(a)\cup C_i(a).
\end{equation}
Indeed, for $\ell_i=x^i_i=a$, we need to have exactly $N-i$ points of the configuration
$X^{2d}$ to the left of $a$, and exactly one point at $a$.
Thus, we 
can write by \Cref{lemma:det_pp_generating_function}:
\begin{equation}
	\label{eq:one_point_marginal}
	\mathscr{M}_{\vec \beta}(\ell_i=a)=
	[z^{N-i}w]\det
	\Bigl( 
		\mathbf{1}
		-
		(1-z)\chi_{_{F_i(a)}}K^{2d}_{\vec\beta} \chi_{_{B_{i}(a)}}
		-
		(1-w)\chi_{_{F_i(a)}}K^{2d}_{\vec\beta} \chi_{_{C_{i}(a)}}
	\Bigr),
\end{equation}
where $[z^{N-i}w]$ is the operator of taking the coefficient of a polynomial by $z^{N-i}w$.
The matrix in the right-hand side \eqref{eq:one_point_marginal} has dimensions $(a+1)\times(a+1)$ and looks as
\begin{equation*}
	\begin{bmatrix}
		1+(z-1)K(0;0)&
		(z-1)K(0;1)&\ldots& 
		(w-1)K(0;a)\\
		(z-1)K(1;0)&
		1+(z-1)K(1;1)&\ldots& 
		(w-1)K(1;a)\\
		\dotfill&\dotfill&\dotfill&
		\dotfill
		\\
		(z-1)K(a-1;0)&
		(z-1)K(a-1;1)&\ldots& 
		(w-1)K(a-1;a)
		\\
		(z-1)K(a;0)&
		(z-1)K(a;1)&\ldots& 
		1+(w-1)K(a;a)
		\end{bmatrix},
\end{equation*}
where we abbreviated $K(x;y)=K^{2d}_{\vec\beta}(x,i;y,i)$.

Finally, to get the correlation function, we simply have to sum \eqref{eq:one_point_marginal}
over all $i=1,\ldots,N $:
\begin{equation}
	\label{eq:one_point_correlation}
	\mathscr{M}_{\vec \beta}(\textnormal{$X$ contains $a$})=
	\sum_{i=1}^{N}
	[z^{N-i}w]\det
	\Bigl( 
		\mathbf{1}
		-
		(1-z)\chi_{_{F_i(a)}}K^{2d}_{\vec\beta} \chi_{_{B_{i}(a)}}
		+
		(1-w)\chi_{_{F_i(a)}}K^{2d}_{\vec\beta} \chi_{_{C_{i}(a)}}
	\Bigr).
\end{equation}
Notice that this is a polynomial in the entries
$K^{2d}_{\vec\beta}(x,t;y,s)$ of the correlation kernel \eqref{eq:Kernel_Eynard_Mehta}.

The next statement for general $k$
follows from an argument for several points which is analogous to the above 
computations:
\begin{proposition}
	\label{prop:correlations_tilted}
	For arbitrary $k\ge1$ and $\mathcal{I}=\{i_1<\ldots<i_k \}$,
	the marginal distribution of $X_{\mathcal{I}}$ 
	under $\mathscr{M}_{\vec \beta}$ has the form
	\begin{equation}
		\label{eq:marginal_multipoint}
		\begin{split}
			&\mathscr{M}_{\vec\beta}(X_{\mathcal{I}}=a_{\mathcal{I}})=
			[z_1^{N-i_1}\ldots z_{k}^{N-i_k} w_1\ldots w_k ]
			\det\biggl(
				\mathbf{1}
				-
				\chi_{_{F_{\mathcal{I}}(a_{\mathcal{I}})}}\sum_{p=1}^k 
				(1-z_p)
				K^{2d}_{\vec\beta}\chi_{_{B_{i_p}(a_{i_p})}}
				\\&\hspace{260pt}+
				\chi_{_{F_{\mathcal{I}}(a_{\mathcal{I}})}}\sum_{p=1}^k (1-w_p)K^{2d}_{\vec\beta}\chi_{_{C_{i_p}(a_{i_p})}}
			\biggr).
		\end{split}
	\end{equation}
	Here the
	square matrix has dimensions 
	$\sum_{p=1}^{k}(a_{i_p}+1)$, 
	the union of all the sets is denoted by
	$F_{\mathcal{I}}(a_{\mathcal{I}})\coloneqq\bigcup_{p=1}^{k}\left( B_{i_p}(a_{i_p})\cup C_{i_p}(a_{i_p}) \right)$,
	and the 
	determinant 
	is a
	polynomial in the entries
	of the correlation kernel $K^{2d}_{\vec \beta}$.

	The correlation functions of $\mathscr{M}_{\vec \beta}$ are finite sums of 
	determinants of the form \eqref{eq:marginal_multipoint}. Namely, 
	for any $k$ and any pairwise distinct $a_1,\ldots,a_k \in \mathbb{Z}_{\ge0}$, we have
	\begin{equation*}
		\mathscr{M}_{\vec\beta}(\textnormal{$X$ contains $a_1,\ldots,a_k $})=
		\sum_{\substack{\mathcal{I}=\{1\le i_1<\ldots<i_k\le N \}}}
		\mathscr{M}_{\vec\beta}\left( X_{\mathcal{I}}=\{a_1,\ldots,a_k \} \right).
	\end{equation*}
\end{proposition}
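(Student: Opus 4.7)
The plan is to reduce the problem to the 2D determinantal process via Theorem \ref{thm:properties_of_W_2d} and then apply Soshnikov's generating function identity (Lemma \ref{lemma:det_pp_generating_function}), generalizing the $k=1$ derivation of \eqref{eq:one_point_marginal}--\eqref{eq:one_point_correlation} that was already spelled out. By \Cref{thm:properties_of_W_2d}, the joint law of $(\ell_{i_1},\ldots,\ell_{i_k})$ under $\mathscr{M}_{\vec\beta}$ equals that of the diagonal entries $(x^{i_1}_{i_1},\ldots,x^{i_k}_{i_k})$ under $\mathscr{M}^{2d}_{\vec\beta}$, which is a determinantal point process on $\mathfrak{X}$ with kernel $K^{2d}_{\vec\beta}$ given by \Cref{prop:Eynard_Mehta_kernel}. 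Thus the marginal $\mathscr{M}_{\vec\beta}(X_{\mathcal{I}}=a_{\mathcal{I}})$ is computed as a marginal of a determinantal process.

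The crucial step is the event translation. Using the notation of \eqref{eq:events_notation}, recall that on the $i_p$-th row of $X^{2d}$ the particles are ordered $x^{i_p}_N<\ldots<x^{i_p}_1$; hence the condition $x^{i_p}_{i_p}=a_{i_p}$ is equivalent to having exactly $N-i_p$ particles on row $i_p$ strictly to the left of $a_{i_p}$ (namely $x^{i_p}_{i_p+1},\ldots,x^{i_p}_N$) together with exactly one particle at position $a_{i_p}$, i.e.~to $\#_{B_{i_p}(a_{i_p})}=N-i_p$ and $\#_{C_{i_p}(a_{i_p})}=1$. Imposing these simultaneously for $p=1,\ldots,k$ captures the event $\{X_{\mathcal{I}}=a_{\mathcal{I}}\}$ exactly; because the indices $i_1<\ldots<i_k$ live on distinct rows, the $2k$ blocks $B_{i_p}(a_{i_p}),C_{i_p}(a_{i_p})$ are automatically pairwise disjoint, so the hypotheses of \Cref{lemma:det_pp_generating_function} apply. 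Assigning generating variables $z_p$ to each $B_{i_p}(a_{i_p})$ and $w_p$ to each $C_{i_p}(a_{i_p})$ and then extracting the coefficient of $z_1^{N-i_1}\cdots z_k^{N-i_k}w_1\cdots w_k$ (by standard Fourier-type inversion in formal variables, since the relevant generating function is a polynomial in each $z_p,w_p$ as the sets are finite) yields \eqref{eq:marginal_multipoint}. The assertion that the determinant is a polynomial in the entries of $K^{2d}_{\vec\beta}$ is immediate from the finite size $\sum_p(a_{i_p}+1)$ of the matrix.

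For the correlation function, observe that the event ``$X$ contains the pairwise distinct points $a_1,\ldots,a_k$'' decomposes as a disjoint union over subsets $\mathcal{I}=\{i_1<\ldots<i_k\}\subset\{1,\ldots,N\}$: given such an $\mathcal{I}$, the event that exactly the coordinates $\ell_{i_1},\ldots,\ell_{i_k}$ realize the values $\{a_1,\ldots,a_k\}$ is well-defined and disjoint from the analogous events for other index sets (since the $\ell_i$'s are strictly decreasing, the correspondence between $\mathcal{I}$ and the sorted values $a_{i_1}>\ldots>a_{i_k}$ is unique). Summing \eqref{eq:marginal_multipoint} over all such $\mathcal{I}$ gives the desired expression. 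The only genuine point requiring care is the event-to-occupation-numbers translation above; everything else is a direct, mechanical application of \Cref{lemma:det_pp_generating_function} together with the partition of the relevant event, so I do not anticipate further obstacles.
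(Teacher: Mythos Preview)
Your proposal is correct and follows essentially the same approach as the paper, which simply states that the general $k$ case ``follows from an argument for several points which is analogous to the above computations'' (i.e., the $k=1$ derivation of \eqref{eq:one_point_marginal}--\eqref{eq:one_point_correlation}). You have filled in precisely the details the paper omits: the reduction via \Cref{thm:properties_of_W_2d}, the translation of $\{x^{i_p}_{i_p}=a_{i_p}\}$ into the occupation conditions $\#_{B_{i_p}(a_{i_p})}=N-i_p$ and $\#_{C_{i_p}(a_{i_p})}=1$, the disjointness of the $2k$ blocks (since they live on distinct rows), and the coefficient extraction from \Cref{lemma:det_pp_generating_function}.
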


\section{Grothendieck random partitions}
\label{sec:Grothendieck_measures_definition_discussion}

Here, we specialize the setup of $\vec\beta$-tilted biorthogonal ensembles
developed in \Cref{sec:tilted_biorthogonal}
to \emph{Grothendieck random partitions}.
A crucial feature in this special case is that the
corresponding two-dimensional ensemble $X^{2d}$
becomes the well-known \emph{Schur process} introduced in
\cite{okounkov2003correlation}.

\subsection{Specialization of tilted biorthogonal ensemble}
\label{sub:Grothendieck}

Fix $N\ge 1$ and
parameters $x_1,\ldots,x_N,y_1,\ldots,y_N$ such that
$|x_iy_j|<1$ for all $i,j$, and specialize
\begin{equation}
	\label{eq:Phi_Psi_specializations_to_Grothendieck}
	\Phi_i(k)=x_i^k,\qquad \Psi_j(k)=y_j^k,\qquad k\in \mathbb{Z}_{\ge0}.
\end{equation}
Then the operators \eqref{eq:def_operators_D}
act as 
\begin{equation}
	\label{eq:D_D_dagger_specializations_to_Grothendieck}
	D_k^{(r)}\Phi_i(k)=x_i^k(1-\beta_r x_i)
	,\qquad 
	D_k^{(r)\dagger}\Psi_j(k)=y_j^k(1-\beta_r y_j^{-1}\mathbf{1}_{k\ge1})
	.
\end{equation}
For a configuration $X=(\ell_1>\ldots>\ell_N\ge0 )$,
denote $\lambda_j\coloneqq \ell_j+j-N$, $j=1,\ldots,N $,
so $\ell_j=\lambda_j+N-j$.
Clearly, we have 
$\lambda=(\lambda_1\ge \ldots\ge \lambda_N\ge0 )$, and $\lambda$ is an integer
partition with at most $N$ parts.
The $\vec\beta$-tilted biorthogonal weight 
\eqref{eq:tilted_biorthogonal}
specializes to 
\begin{equation*}
	\begin{split}
		\mathscr{W}_{\vec \beta;\mathrm{Gr}}(\lambda)
		&=
		\det\bigl[ 
		x_i^{\lambda_j+N-j}(1-\beta_1x_i)\ldots(1-\beta_{j-1}x_i) \bigr]_{i,j=1}^{N}
		\\&\hspace{90pt}\times\det\bigl[ 
			y_i^{\lambda_j+N-j}(1-\beta_j y_i^{-1})\ldots(1-\beta_{N-1}y_i^{-1}) 
		\bigr]_{i,j=1}^{N}.
	\end{split}
\end{equation*}
Observe that in the second determinant, the operators $D_{\ell_j}^{[j,N)\dagger}$ are applied
in $\ell_1,\ldots,\ell_{N-1} $, which are strictly positive.
Therefore, the special case $k=0$ in $D_k^{(r)\dagger}$ in
\eqref{eq:D_D_dagger_specializations_to_Grothendieck}
does not occur.

The normalizing constant 
in \Cref{prop:partition_function_tilted}
becomes
\begin{equation*}
	\begin{split}
		\mathscr{Z}_{\vec \beta;\mathrm{Gr}}
		&=
		\det\Bigl[ 
			\frac{(1-\beta_1x_i)\ldots (1-\beta_{N-1}x_i) }{1-x_iy_j}
		\Bigr]_{i,j=1}^{N}
		\\&=
		\frac{
		\prod_{i=1}^{N}\prod_{r=1}^{N-1}
		(1-x_i\beta_r)}{\prod_{i,j=1}^N(1-x_iy_j)}
		\ssp
		\prod_{1\le i<j\le N}(x_i-x_j)(y_i-y_j),
	\end{split}
\end{equation*}
where the matrix elements are geometric
sums, see \eqref{eq:tilted_Gram_matrix_definition}, 
and the well-known Cauchy determinant is evaluated in a product form.

Let us denote 
\begin{equation}
	\label{eq:Grothendieck_polynomials}
	\begin{split}
		G_\lambda(x_1,\ldots,x_N )
		&\coloneqq
		\frac{\det\bigl[ 
		x_i^{\lambda_j+N-j}(1-\beta_1x_i)\ldots(1-\beta_{j-1}x_i) \bigr]_{i,j=1}^{N}}{\prod_{1\le i<j\le N}(x_i-x_j)};
		\\
		\overline{G}_\lambda(y_1,\ldots,y_N )
		&\coloneqq
		\frac{\det\bigl[ 
			y_i^{\lambda_j+N-j}(1-\beta_j y_i^{-1})\ldots(1-\beta_{N-1}y_i^{-1})
		\bigr]_{i,j=1}^{N}}{\prod_{1\le i<j\le N}(y_i-y_j)}.
	\end{split}
\end{equation}
Since $\lambda_j+N-j\ge N-j$ and in the matrix elements in $\overline{G}_\lambda$
there are $N-j$ factors of the form $(1-\beta_r y_i^{-1})$,
we see that both $G_\lambda$ and $\overline G_\lambda$
are symmetric polynomials in $N$ variables. 
We thus see that the $\vec\beta$-tilted
biorthogonal ensemble with the specialization 
\eqref{eq:Phi_Psi_specializations_to_Grothendieck}
has the form
\begin{equation}
	\label{eq:Grothendieck_measure_definition_complete}
	\mathscr{M}_{\vec \beta;\mathrm{Gr}}(\lambda)
	=
	\frac
	{\prod_{i,j=1}^N(1-x_iy_j)}
	{\prod_{i=1}^{N}\prod_{r=1}^{N-1}(1-x_i\beta_r)}
	\ssp
	G_\lambda(x_1,\ldots,x_N )\ssp
	\overline G_{\lambda}(y_1,\ldots,y_N ).
\end{equation}
We call \eqref{eq:Grothendieck_measure_definition_complete}
the (multiparameter)
\emph{Grothendieck measure on partitions}.
This distribution is analogous to the Schur
measure introduced in \cite{okounkov2001infinite}
which is a particular case of 
$\mathscr{M}_{\vec \beta;\mathrm{Gr}}$
for $\beta_r\equiv 0$.

Note that the probability weights 
\eqref{eq:Grothendieck_measure_definition_complete} may be 
complex-valued. In \Cref{sub:references_positivity} below
we discuss conditions on the parameters $x_i,y_j,\beta_r$ which make
the weights nonnegative real.

\subsection{Grothendieck polynomials and positivity}
\label{sub:references_positivity}

Here, we comment on the relations between the polynomials
$G_\lambda,\overline G_\lambda$ 
and Grothendieck polynomials appearing in the literature. We also
discuss the nonnegativity of the measure 
$\mathscr{M}_{\vec \beta;\mathrm{Gr}}$
\eqref{eq:Grothendieck_measure_definition_complete}
on partitions.

Grothendieck polynomials are
well-known in algebraic combinatorics and geometry, 
going back to at least
\cite{lascoux1982structure}, see also
\cite{buch2002littlewood}.
Their one-parameter $\beta$-deformations
appeared in
\cite{fomin1994grothendieck}.
The recent paper 
\cite{hwang2021refined}
introduced and studied the most general (to date)
deformations
called \emph{refined canonical stable Grothendieck polynomials}
$\mathsf{G}_\lambda(x_1,\ldots,x_N;\vec \alpha,\vec \beta )$.
These objects generalize most known Grothendieck-like polynomials
in the literature,
in particular, the ones in \cite{buch2002littlewood},
\cite{fomin1994grothendieck}, as well
as more recent extensions in, e.g.,
\cite{yeliussizov2015duality}, \cite{chan2021combinatorial}.
The refined canonical stable Grothendieck polynomials
$\mathsf{G}_\lambda(x_1,\ldots,x_N;\vec \alpha,\vec
\beta )$ depend on two sequences of parameters
$\vec\alpha=(\alpha_1,\alpha_2,\ldots )$
and
$\vec\beta=(\beta_1,\beta_2,\ldots )$, and are defined as 
\begin{equation}
	\label{eq:kim_polynomials}
	\mathsf{G}_\lambda(x_1,\ldots,x_N ;\vec \alpha,\vec \beta)
	\coloneqq
	\frac{\det\biggl[ 
	x_i^{\lambda_j+N-j}
	\dfrac{(1-\beta_1x_i)\ldots(1-\beta_{j-1}x_i)}
	{(1-\alpha_1 x_i)\ldots(1-\alpha_{\lambda_j}x_i) } \biggr]_{i,j=1}^{N}}{\prod_{1\le i<j\le N}(x_i-x_j)}.
\end{equation}
Note that for nonzero $\alpha_j$'s, 
$\mathsf{G}_\lambda(x_1,\ldots,x_N;\vec \alpha,\vec \beta )$
are not polynomials but rather are generating series in the $x_j$'s.
When $\alpha_j=0$ for all $j$ (which drops the word ``canonical'' from the terminology), expressions
\eqref{eq:kim_polynomials} become polynomials and reduce to our $G_\lambda$'s from \eqref{eq:Grothendieck_polynomials}.
The polynomials $\overline{G}_\lambda$ in \eqref{eq:Grothendieck_polynomials}
are expressed through the $G_{\lambda}$'s as follows.
Denote $\beta_r^{rev}\coloneqq\beta_{N-r}$, $r=1,\ldots,N-1$, 
and $\lambda^{rev}\coloneqq (0\ge -\lambda_N\ge -\lambda_{N-1}\ge \ldots\ge -\lambda_1 )$.
Then, one readily sees that
\begin{equation}\label{eq:reversal_property_G_bar}
	\overline{G}_{\lambda}(x_1,\ldots,x_N \mid \vec \beta )=
	G_{\lambda^{rev}}(x_1^{-1},\ldots,x_{N}^{-1}\mid \vec \beta^{rev} ),
\end{equation}
where we explicitly indicated the dependence on the 
parameters $\beta_r$.
Moreover, since $G_\lambda$ satisfies the index shift property
$G_{\lambda+(1,\ldots,1 )}(x_1,\ldots,x_N )=x_1\ldots x_N \cdot G_{\lambda}(x_1,\ldots,x_N )$,
one can shift the negative coordinates $\lambda^{rev}$ to obtain a nonnegative partition.

\medskip

The sum to one property of the Grothendieck measure
$\mathscr{M}_{\vec\beta;\mathrm{Gr}}$ \eqref{eq:Grothendieck_measure_definition_complete}
is equivalent to the following Cauchy-type summation identity for the 
polynomials \eqref{eq:Grothendieck_polynomials}:
\begin{equation}
	\label{eq:Cauchy_for_G}
	\sum_{\lambda=(\lambda_1\ge \ldots\ge \lambda_N\ge 0 )}
	G_{\lambda}(x_1,\ldots,x_N )
	\ssp
	\overline{G}_{\lambda}(y_1,\ldots,y_N )
	=
	\frac{
	\prod_{i=1}^{N}\prod_{r=1}^{N-1}
	(1-x_i\beta_r)}{\prod_{i,j=1}^N(1-x_iy_j)},
	\qquad |x_iy_j|<1.
\end{equation}
It is instructive to compare this identity to Cauchy
identities for Grothendieck symmetric functions, for example, see
\cite[(36)]{yeliussizov2015duality} or
\cite[Corollary 3.6]{hwang2021refined}. The latter identities involve sums of products in the form $\mathsf{G}_\lambda\ssp \mathsf{g}_\lambda$, where $\mathsf{g}_\lambda$ are the dual Grothendieck symmetric functions.
The products in the right-hand side of these summation identities
have the form
$\prod_{i,j=1}^{\infty}(1-x_iy_j)^{-1}$, and a possible
analogue in our case would be 
$\prod_{i,j=1}^{\infty}\frac{1-x_i\beta_j}{1-x_iy_j}$.
However, in this paper, we will not explore a symmetric function
extension of the identity \eqref{eq:Cauchy_for_G}.

\medskip

Let us now discuss the nonnegativity of the probability
weights $\mathscr{M}_{\vec\beta;\mathrm{Gr}}$
\eqref{eq:Grothendieck_measure_definition_complete}.
Using the tableau formula for $G_\lambda$
(for example, \cite[Corollary 4.5]{hwang2021refined})
and the relation \eqref{eq:reversal_property_G_bar}
between $G_\lambda$ and $\overline{G}_\lambda$, 
we see that the probability weights 
$\mathscr{M}_{\vec\beta;\mathrm{Gr}}(\lambda)$ are nonnegative for all $\lambda$
when the parameters satisfy
\begin{equation}
	\label{eq:tableau_formula_for_G_and_easy_positivity}
	x_i\ge 0,\quad y_j\ge0,\quad \beta_r\le0;\qquad |x_iy_j|<1;
	\qquad 
	1\le i,j\le N,\quad 1\le r\le N-1.
\end{equation}
Indeed, under \eqref{eq:tableau_formula_for_G_and_easy_positivity}
we have 
nonnegativity (and even Schur--nonnegativity, 
cf.~\cite[Theorem 4.3]{hwang2021refined})
of $G_\lambda$ and $\overline{G}_\lambda$, 
as well as the convergence of 
the series \eqref{eq:Cauchy_for_G}.

Furthermore, we can extend the nonnegativity range 
of the Grothendieck measures
to 
certain positive values of $\beta_r$:
\begin{proposition}
	\label{prop:Grothendieck_extended_positivity}
	Let $x_i,y_j\ge0$ and $\beta_r\le x_i^{-1}$, $\beta_r\le y_j$ for all $i,j,r$.
	Then the Grothendieck polynomials $G_\lambda(x_1,\ldots,x_N )$ and
	$\overline{G}_{\lambda}(y_1,\ldots,y_N )$ defined by \eqref{eq:Grothendieck_polynomials} are nonnegative.
\end{proposition}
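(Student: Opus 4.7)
The strategy is to prove nonnegativity of $G_\lambda$ first, and then deduce nonnegativity of $\overline G_\lambda$ via the reversal identity \eqref{eq:reversal_property_G_bar}. Under the hypothesis $x_i \ge 0$ and $\beta_r \le x_i^{-1}$, both $x_i$ and each factor $(1-\beta_r x_i)$ appearing in the matrix entries of the numerator in \eqref{eq:Grothendieck_polynomials} are nonnegative; similarly, the hypothesis $y_j \ge 0, \beta_r \le y_j$ makes each $y_i - \beta_r$ (equivalently $y_i(1-\beta_r y_i^{-1})$) nonnegative. The goal is to organize these factors into a manifestly positive sum.

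For $G_\lambda$, I would establish a combinatorial expansion $G_\lambda(x_1,\ldots,x_N; \vec\beta) = \sum_T w(T)$ over a suitable family of decorated tableaux (or equivalently, nonintersecting path systems), where each summand $w(T)$ is a product of factors drawn from the set $\{x_i\}\cup\{1-\beta_r x_i\}$. Such a formula can be obtained either by specializing to $\vec\alpha=0$ one of the tableau formulas for refined canonical stable Grothendieck polynomials in \cite{hwang2021refined}, or directly from \eqref{eq:Grothendieck_polynomials} via a Lindstr\"om--Gessel--Viennot interpretation on a planar directed graph whose horizontal edges carry weight $x_i$ and whose ``jump'' edges carry the atomic weight $(1-\beta_r x_i)$. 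In either case, under the hypothesis every $w(T)\ge 0$, whence $G_\lambda\ge 0$.

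For $\overline G_\lambda$, I use the reversal identity \eqref{eq:reversal_property_G_bar} together with the index-shift property $G_{\lambda+(1,\ldots,1)}(x)=x_1\cdots x_N\cdot G_\lambda(x)$ to write
\begin{equation*}
\overline G_\lambda(y_1,\ldots,y_N;\vec\beta)=(y_1\cdots y_N)^{\lambda_1}\,G_\mu(y_1^{-1},\ldots,y_N^{-1};\vec\beta^{rev}),
\end{equation*}
where $\mu\coloneqq\lambda^{rev}+(\lambda_1,\ldots,\lambda_1)$ is a genuine nonnegative partition. Under the substitutions $\widetilde x_i\coloneqq y_i^{-1}$ and $\widetilde\beta_r\coloneqq\beta_{N-r}$, the hypothesis $\beta_r\le y_j$ becomes $\widetilde\beta_r\le\widetilde x_i^{-1}$, so the first step applied to $G_\mu(\widetilde x;\widetilde{\vec\beta})$, combined with $(y_1\cdots y_N)^{\lambda_1}\ge 0$, gives $\overline G_\lambda\ge 0$. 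The edge case $y_j=0$ follows by continuity from the polynomial identity.

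The main obstacle is the first step: producing the combinatorial expansion in a form that preserves positivity all the way to $\beta_r=x_i^{-1}$, rather than merely to $\beta_r\le 0$. A naive expansion of each $(1-\beta_r x_i)$ as $1+(-\beta_r x_i)$ followed by Leibniz decomposition of the determinant yields an alternating sum in which the $-\beta_r$'s appear in isolation, giving positivity only for $\beta_r\le 0$. The key is to preserve each $(1-\beta_r x_i)$ as a single \emph{atomic} nonnegative weight throughout the expansion, which requires a carefully chosen combinatorial model (or LGV graph geometry) compatible with the factored structure of the matrix entries. Verifying this bijection between the matrix-entry expansion and a manifestly nonnegative sum is the crux of the argument.
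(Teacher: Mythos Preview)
Your reduction of $\overline G_\lambda$ to $G_\lambda$ via the reversal identity and index shift is clean and correct. The problem is your first step: you assert the existence of a combinatorial expansion of $G_\lambda$ as a nonnegative sum of products of atoms $x_i$ and $(1-\beta_r x_i)$, but you do not construct it. The tableau formula in \cite{hwang2021refined} that you point to is a Schur-positive expansion whose atoms are the $x_i$'s and the $(-\beta_r)$'s \emph{separately}; this is exactly the ``naive expansion'' you yourself reject in your last paragraph as insufficient beyond $\beta_r\le 0$. An LGV model with atomic edge weights $(1-\beta_r x_i)$ is plausible, but building one that produces the ratio $\det[\cdots]/\prod_{i<j}(x_i-x_j)$ (not just the numerator determinant) as a manifestly nonnegative sum is a genuine combinatorial task, and you have not carried it out. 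So the proposal has a gap precisely at the point you identify as the crux.

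The paper takes a completely different and much shorter route that avoids combinatorics. It writes $G_\lambda$ as the result of applying the difference operators $D_{\ell_j}^{[1,j)}$ to the Schur polynomial, and uses the elementary inequality $D_k^{(r)} f(k)=f(k)-\beta_r f(k+1)\ge f(k)-x_r^{-1}f(k+1)$ for nonnegative $f$ and $\beta_r\le x_r^{-1}$ to argue that $G_\lambda(x;\vec\beta)\ge G_\lambda(x)\big|_{\beta_r=x_r^{-1}}$. At the endpoint $\beta_r=x_r^{-1}$, the numerator determinant becomes triangular (the $(i,j)$ entry contains the factor $1-x_i/x_i=0$ whenever $i<j$), and the resulting product cancels against the Vandermonde to leave $\prod_i x_i^{\lambda_i}\ge 0$. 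Your approach, if the required expansion were actually produced, would yield a stronger structural statement; the paper's argument is instead a short algebraic shortcut through a single well-chosen specialization.
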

\begin{proof}
	We consider only the case $G_\lambda$, 
	as $\overline{G}_\lambda$ is completely analogous.
	For a nonnegative
	function $f$ on $\mathbb{Z}_{\ge0}$ we have under our conditions:
	\begin{equation*}
		D_k^{(r)}f(k) = f(k)-\beta_r f(k+1) \geq f(k)-x_r^{-1} f(k+1).
	\end{equation*}
	Therefore, 
	replacing $\beta_r$ 
	by $x_r^{-1}$
	in the 
	application of $D_k^{(r)}$
	can only decrease the result.

	The Grothendieck polynomial
	$G_\lambda(x_1,\ldots,x_N )$ is obtained by applying
	the operators $D_k^{(r)}$ to the Schur polynomial
	$s_\lambda(x_1,\ldots,x_N )$:
	\begin{equation*}
		G_\lambda(x_1,\ldots,x_N ) = \dfrac{ \det\bigl[
		D_{\ell_j}^{[1,j)}
		x_i^{\ell_j}\Bigr]_{i,j=1}^{N}}{\prod_{1\le i<j\le
		N}(x_i-x_j)} = D_{\ell_1}^{[1,1)}\dots D_{\ell_N}^{[1,N)}
		s_{\lambda}(x_1,\ldots,x_N ) .
	\end{equation*}
	It follows that $G_\lambda (x_1,\ldots,x_N ) \geq
	G_\lambda(x_1,\ldots,x_N )\bigl|_{\beta_r = x_r^{-1}\text{ for all $r$}}$.
	On the other hand, when $\beta_r=x_r^{-1}$ for all $r$, 
	the matrix in the numerator in $G_\lambda$
	\eqref{eq:Grothendieck_polynomials}
	becomes triangular, and we have
	\begin{equation*}
		\det\bigl[ x_i^{\ell_j}(1-x_i/x_1)\ldots (1-x_i/x_{j-1})  \bigr]_{i,j=1}^N
		=x_1^{\ell_1}\ldots x_N^{\ell_N} \prod_{1\le i<j\le N}\left( 1-\tfrac{x_j}{x_i} \right).
	\end{equation*}
	Cancelling the product over $i,j$ with the denominator in $G_\lambda$, we see that 
	after the substitution, the resulting expression
	$G_\lambda(x_1,\ldots,x_N )\bigl|_{\beta_r = x_r^{-1}\text{ for all $r$}}$
	is clearly nonnegative. This completes the proof.
\end{proof}

\Cref{prop:Grothendieck_extended_positivity} implies that the
Grothendieck 
probability weights 
$\mathscr{M}_{\vec\beta;\mathrm{Gr}}(\lambda)$ are nonnegative for all $\lambda$
when the parameters satisfy the extended conditions
\begin{equation}
	\label{eq:extended_G_positivity}
	x_i\ge 0,\quad y_j\ge0,\quad \beta_r\le x_i^{-1}
	,\quad \beta_r\le y_j;
	\quad |x_iy_j|<1;
	\quad 
	1\le i,j\le N,\quad 1\le r\le N-1.
\end{equation}

\subsection{Two-dimensional Schur process and its correlation kernel}
\label{sub:2d_Schur_process}

By \Cref{thm:properties_of_W_2d}, 
the Grothendieck measure 
is embedded into the two-dimensional ensemble
$X^{2d}$ \eqref{eq:2d_particle_configuration}.
Our specialization \eqref{eq:Phi_Psi_specializations_to_Grothendieck}
implies that $X^{2d}$ is distributed as the \emph{Schur process}.
Schur processes are a vast family of determinantal point
processes on the two-dimensional lattice introduced
and studied in
\cite{okounkov2003correlation}.

Assume that the parameters
satisfy \eqref{eq:tableau_formula_for_G_and_easy_positivity},
and define
$\mu_i^m\coloneqq x^m_i+i-N$, $i,m=1,\ldots,N $,
where the particles $x^m_i$ come from the two-dimensional ensemble
$X^{2d}$.
Clearly,
each $\mu^m=(\mu^m_1\ge \ldots\ge \mu^m_N\ge0)$ is a partition 
with at most $N$ parts.
From 
\eqref{eq:T_beta_determinant_evaluation}--\eqref{eq:2d_unnormalized_weights} we conclude that the 
probability weight of the tuple of partitions
$(\mu^1,\ldots,\mu^N)$ is
\begin{equation}
	\label{eq:Schur_process_from_Grothendieck}
	\begin{split}
		\mathscr{M}_{\vec \beta;\mathrm{Gr}}^{2d}(
		\mu^1,\ldots,\mu^N )
		&\propto
		s_{\mu^1}(x_1,\ldots,x_N )
		\\
		&\hspace{20pt}\times
		s_{(\mu^1)'/(\mu^2)'}(-\beta_1)\ldots 
		s_{(\mu^{N-1})'/(\mu^{N})'}(-\beta_{N-1})\ssp
		s_{\mu^{N}}(y_1,\ldots,y_N ).
	\end{split}
\end{equation}
Here
$(\mu^{m})'/(\mu^{m+1})'$ denote skew transposed partitions, and
we used a basic property of skew Schur functions
evaluated at a single variable (for example,
see \cite[Chapter I.5]{Macdonald1995}):
\begin{equation*}
	\det\left[ T_{\beta_m}(x^m_i,x^{m+1}_j) \right]_{i,j=1}^{N}=
	s_{(\mu^{m})'/(\mu^{m+1})'}(-\beta_{m}).
\end{equation*}

From
\Cref{thm:properties_of_W_2d} we immediately get:
\begin{proposition}
	\label{prop:Grothendieck_measures_embedding}
	The Grothendieck measure $\mathscr{M}_{\vec\beta;\mathrm{Gr}}(\lambda)$
	\eqref{eq:Grothendieck_measure_definition_complete}
	is embedded into the Schur process 
	\eqref{eq:Schur_process_from_Grothendieck}
	in the sense that 
	the joint distributions of the integer $N$-tuples
	$\{\lambda_i\}_{i=1,\ldots,N }$
	and
	$\{\mu_i^i \}_{i=1,\ldots,N }$
	coincide.
\end{proposition}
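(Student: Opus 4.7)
The plan is to derive Proposition 3.3.1 as a direct specialization of Theorem 2.4 combined with the identification of the two-dimensional weights as the Schur process. Under the specialization $\Phi_i(k) = x_i^k$, $\Psi_j(k) = y_j^k$, Section 3.1 has already verified that the one-dimensional weights $\mathscr{M}_{\vec\beta}$ reduce to the Grothendieck measure $\mathscr{M}_{\vec\beta;\mathrm{Gr}}$. So all that remains is to check the two-dimensional side and translate coordinates.

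The first step is to verify that under the specialization $\Phi_i(k)=x_i^k$, $\Psi_j(k)=y_j^k$, the two-dimensional weights $\mathscr{W}_{\vec\beta}^{2d}(X^{2d})$ given in \eqref{eq:2d_unnormalized_weights} coincide, up to the Vandermonde factors $\prod_{i<j}(x_i-x_j)\prod_{i<j}(y_i-y_j)$, with the Schur process weights \eqref{eq:Schur_process_from_Grothendieck}. Substituting $x^m_i = \mu^m_i + N - i$, the first determinant becomes $\det[x_i^{\mu^1_j+N-j}]_{i,j=1}^N$, which after dividing by the $x$-Vandermonde yields $s_{\mu^1}(x_1,\ldots,x_N)$; the analogous computation for $\Psi_i$ yields $s_{\mu^N}(y_1,\ldots,y_N)$. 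For the intermediate $T_{\beta_m}$-determinants, formula \eqref{eq:T_beta_determinant_evaluation} together with the single-variable skew Schur identity (Macdonald, Chapter I.5) that is already invoked in the text gives $\det[T_{\beta_m}(x^m_i,x^{m+1}_j)]_{i,j=1}^N = s_{(\mu^m)'/(\mu^{m+1})'}(-\beta_m)$.

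The second step is to transfer the marginal identity from Theorem 2.4. That theorem asserts that under $\mathscr{M}_{\vec\beta}^{2d}$, the diagonal marginal $(x^1_1>x^2_2>\ldots>x^N_N)$ has the same joint distribution as $(\ell_1>\ell_2>\ldots>\ell_N)$ under $\mathscr{M}_{\vec\beta}$. Applying the deterministic shift $\mu^i_i = x^i_i + i - N$ and $\lambda_i = \ell_i + i - N$ (which is a bijection on tuples) preserves joint distributions, giving the coincidence of the laws of $\{\mu^i_i\}_{i=1}^N$ and $\{\lambda_i\}_{i=1}^N$. Combined with the first step, which identifies $\mathscr{M}^{2d}_{\vec\beta}$ in these coordinates as the Schur process $\mathscr{M}_{\vec\beta;\mathrm{Gr}}^{2d}$ and $\mathscr{M}_{\vec\beta}$ as $\mathscr{M}_{\vec\beta;\mathrm{Gr}}$, this yields the claim.

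No genuine obstacle appears: the proof is pure bookkeeping once Theorem 2.4 and the Schur process identification are in hand. The only subtle point worth stating carefully is that the normalization constants agree, which follows because both $\mathscr{M}_{\vec\beta}$ and $\mathscr{M}_{\vec\beta}^{2d}$ share the same partition function $\mathscr{Z}_{\vec\beta}$ by Theorem 2.4, and this matches $\mathscr{Z}_{\vec\beta;\mathrm{Gr}}$ by the Cauchy determinant evaluation already recorded in Section 3.1.
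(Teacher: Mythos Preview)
Your proposal is correct and takes essentially the same approach as the paper, which simply states that the proposition follows immediately from \Cref{thm:properties_of_W_2d} after the identification of the two-dimensional ensemble with the Schur process carried out in the surrounding text. Your write-up spells out the bookkeeping (the Vandermonde factors, the coordinate shift $\mu^i_i = x^i_i + i - N$, and the matching of normalizing constants) more explicitly than the paper does, but there is no difference in substance.
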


\begin{remark}
	\label{rmk:Schur_nonpositive_Groth_positive}
	While the Schur process
	\eqref{eq:Schur_process_from_Grothendieck}
	is not a nonnegative measure for $\beta_r>0$,
	the Grothendieck
	measures \eqref{eq:Grothendieck_measure_definition_complete}
	are still nonnegative probability measures
	under the more relaxed conditions
	\eqref{eq:extended_G_positivity}.
	Consequently, we
	will primarily focus on the case 
	when the parameters satisfy the more restrictive
	conditions \eqref{eq:tableau_formula_for_G_and_easy_positivity}.
	However, below in 
	\Cref{sub:simulations_and_particular_cases}
	we will also consider the question of limit shapes
	for Grothendieck measures with positive $\beta_r$'s
	(which do not correspond to nonnegative Schur processes).
\end{remark}

As shown in 
\cite{okounkov2003correlation},
the correlation kernel of the Schur process \eqref{eq:Schur_process_from_Grothendieck}
has a double contour integral form.
The alternative proof of this result given in \cite[Theorem 2.2]{borodin2005eynard} proceeds from the general kernel $K^{2d}_{\vec\beta}$ \eqref{eq:Kernel_Eynard_Mehta} and involves an explicit inverse matrix $G^{-1}(\vec\beta)$ which is available thanks to the Cauchy determinant. Let us record this double contour integral kernel:

\begin{proposition}
	\label{prop:Schur_correlation_kernel_via_integrals}
	The correlation kernel for the Schur process 
	$X^{2d}=\{x^m_i\colon 1\le m,i\le N\}$
	containing
	the Grothendieck measure $\mathscr{M}_{\vec\beta;\mathrm{Gr}}$
	\eqref{eq:Grothendieck_measure_definition_complete}
	has the form 
	\begin{equation}
		\label{eq:Kernel_Schur}
		K_{\vec\beta;\mathrm{Gr}}^{2d}(a,t;b,s)=
		\frac{1}{(2\pi \mathbf{i})^2}
		\oiint
		\frac{dz\ssp dw}{z-w}\frac{w^{b-N}}{z^{a-N+1}}\frac{F_t(z)}{F_s(w)},
	\end{equation}
	where 
	$a,b\in \mathbb{Z}_{\ge0}$,
	$t,s \in\left\{ 1,\ldots,N \right\}$,
	\begin{equation}
		\label{eq:F_t_function_for_kernel}
		F_t(z)\coloneqq
		\prod_{i=1}^{N}\frac{1-z^{-1}y_i}{1-zx_i}\ssp\prod_{r=t}^{N-1}
		\frac1{1-\beta_r z^{-1}}
		,
	\end{equation}
	and the integration contours in
	\eqref{eq:Kernel_Schur} are positively oriented simple closed 
	curves around $0$
	satisfying the following conditions:
	\begin{enumerate}[\rm{}(1)\/]
		\item $|z|>|w|$ for $t\le s$ and $|z|<|w|$ for $t>s$;
		\item On the contours it must be that $|\beta_r|<|z|<x_i^{-1}$
			and $|w|>y_i$ for all $i$ and $r$.
	\end{enumerate}
\end{proposition}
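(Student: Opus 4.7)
The proof will follow the strategy of Okounkov--Reshetikhin \cite{okounkov2003correlation} and the refinement by Borodin--Rains \cite{borodin2005eynard}, starting from the general Eynard--Mehta formula \eqref{eq:Kernel_Eynard_Mehta}. Under the specialization $\Phi_i(k)=x_i^k$, $\Psi_j(k)=y_j^k$ from \eqref{eq:Phi_Psi_specializations_to_Grothendieck}, the difference operators act multiplicatively:
\begin{equation*}
D_x^{[1,t)}\Phi_i(x) = x_i^{x}\prod_{r=1}^{t-1}(1-\beta_r x_i), \qquad D_y^{[s,N)\dagger}\Psi_j(y) = y_j^{y}\prod_{r=s}^{N-1}(1-\beta_r y_j^{-1}),
\end{equation*}
(where the boundary case $k=0$ in $D^{\dagger}$ does not occur, as in \Cref{sub:Grothendieck}), and the Gram matrix \eqref{eq:tilted_Gram_matrix_definition} reduces to the deformed Cauchy matrix $G_{ij}(\vec\beta)=\prod_{r=1}^{N-1}(1-\beta_r x_i)/(1-x_iy_j)$, already computed in \Cref{sub:Grothendieck}.

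The next step is to invert $G(\vec\beta)$ explicitly. Writing $G_{ij}(\vec\beta)=c_i/(1-x_iy_j)$ with $c_i=\prod_r(1-\beta_r x_i)$, I would apply the classical inverse Cauchy matrix formula to obtain
\begin{equation*}
G^{-1}_{ji}(\vec\beta)=\frac{1}{c_i}\cdot \mathrm{Res}_{z=x_i^{-1}}\mathrm{Res}_{w=y_j}\Bigl[\frac{1}{(z-w)}\prod_{l=1}^{N}\frac{(1-zx_l)^{-1}(1-w^{-1}y_l)^{-1}\cdot (\text{normalization})}{(\ldots)}\Bigr],
\end{equation*}
or more directly, to recognize that the finite double sum $\sum_{i,j=1}^{N}G^{-1}_{ji}(\vec\beta)\cdot D_x^{[1,t)}\Phi_i(x)\cdot D_y^{[s,N)\dagger}\Psi_j(y)$ in \eqref{eq:Kernel_Eynard_Mehta} is exactly the sum of residues of an appropriate meromorphic function at $z=x_i^{-1}$ (for $i=1,\dots,N$) and $w=y_j$ (for $j=1,\dots,N$). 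Collecting all the $\beta_r$-factors, the ratio $c_i^{-1}\cdot\prod_{r=1}^{t-1}(1-\beta_r x_i) = \prod_{r=t}^{N-1}(1-\beta_r x_i)^{-1}$ on the $z$-side and the corresponding product on the $w$-side assemble into the ratio $F_t(z)/F_s(w)$ of \eqref{eq:F_t_function_for_kernel}. The prefactor $w^{b-N}/z^{a-N+1}$ accounts for the monomials $x_i^{x}$ and $y_j^{y}$ evaluated at the poles, combined with the index shift coming from the second line of the Eynard--Mehta kernel.

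The indicator term $-\mathbf{1}_{t>s}(T_{\beta_s}*\cdots*T_{\beta_{t-1}})(y,x)$ of \eqref{eq:Kernel_Eynard_Mehta} is absorbed into the same double contour integral by choosing the relative size of the contours: when $t\le s$, I would take $|z|>|w|$ and expand $(z-w)^{-1}$ as a geometric series in $w/z$ so that only the residues at $z=x_i^{-1}$, $w=y_j$ contribute; when $t>s$, I would take $|z|<|w|$, so that the additional pole at $z=w$ is picked up, and a direct residue computation at $z=w$ recovers exactly the convolution $(T_{\beta_s}*\cdots*T_{\beta_{t-1}})(y,x)$ with the correct sign. The contour constraints $|\beta_r|<|z|<x_i^{-1}$ and $|w|>y_i$ in condition (2) are precisely what is needed to separate the $x$-poles from the $\beta$-poles for $z$ and the $y$-poles from the origin for $w$, while keeping $0$ inside both contours.

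The main technical obstacle is the bookkeeping of poles and the combinatorial reassembly of the $\beta$-dependent products into $F_t(z)/F_s(w)$. In particular, one must verify that the inverse of the deformed Cauchy matrix $G(\vec\beta)$ introduces exactly the missing factors $\prod_{r=1}^{N-1}(1-\beta_r x_i)^{-1}$ so that, after combining with the $D$-operator contributions, only the truncated products $\prod_{r=t}^{N-1}$ (in $z$) and $\prod_{r=s}^{N-1}$ (in $w$) survive. Since the Cauchy inverse is already available in closed form from $\mathscr{Z}_{\vec\beta;\mathrm{Gr}}$ computed in \Cref{sub:Grothendieck}, and the contour manipulation is standard, this bookkeeping — together with the residue-to-indicator identification for $t>s$ — comprises all the remaining content of the proof.
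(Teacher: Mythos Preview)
Your approach is correct but takes a genuinely different route from the paper. The paper does not start from the Eynard--Mehta kernel \eqref{eq:Kernel_Eynard_Mehta} at all; instead it quotes the Okounkov--Reshetikhin generating-function formula for Schur process kernels \cite{okounkov2003correlation}, identifies the marginal Schur measure on each $\mu^t$ (namely \eqref{eq:Schur_measure}), reads off $F_t(z)$ from the single-variable Cauchy kernel $H_\rho(z)$, and then determines the contour constraints by a ``user's manual'' (expand every denominator as a geometric series; match the direction of change of specializations in $t$ against \cite[(20)]{okounkov2003correlation} to fix the ordering $|z|\lessgtr|w|$). Your strategy---explicitly inverting the deformed Cauchy matrix $G(\vec\beta)$, rewriting the finite double sum in \eqref{eq:Kernel_Eynard_Mehta} as a residue sum, and absorbing the indicator term by switching the relative radii of the contours---is precisely the Borodin--Rains alternative that the paper acknowledges in the paragraph preceding the proposition (``The alternative proof of this result given in \cite[Theorem 2.2]{borodin2005eynard} proceeds from the general kernel $K^{2d}_{\vec\beta}$ \eqref{eq:Kernel_Eynard_Mehta} and involves an explicit inverse matrix $G^{-1}(\vec\beta)$\ldots''). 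Your route is more self-contained and makes the $\beta_r$-bookkeeping explicit; the paper's route is shorter because it outsources the hard work to \cite{okounkov2003correlation} and only has to match parameters. One small caveat: your parenthetical that ``the boundary case $k=0$ in $D^\dagger$ does not occur, as in \Cref{sub:Grothendieck}'' is not literally correct for the kernel (where $y\in\mathbb{Z}_{\ge0}$ is arbitrary), only for the probability weights; this does not affect the argument, since the double contour integral kernel still produces the correct correlations, but you should not invoke that shortcut here.
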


The integration contours in \Cref{prop:Schur_correlation_kernel_via_integrals}
exist only under certain conditions on the
parameters $x_i,y_j$, and $\beta_r$, for example, it must be that $|\beta_r|<x_i^{-1}$
for all $i,r$. When these conditions on the parameters are violated, we should 
deform the integration contours to take the same residues.
In other words, we can analytically continue the kernel by declaring that the double contour integral in \eqref{eq:Kernel_Schur} is always equal to the sum of the same residues:
 first
take the sum of the residues in
$w$ at $0$, all $y_i$'s, and at $z$ if $t>s$;
then take the sum of the residues of the resulting expression
in 
$z$ at 0 and all $\beta_r$'s.

	\begin{figure}[htpb]
		\centering
		\includegraphics[width=.55\textwidth]{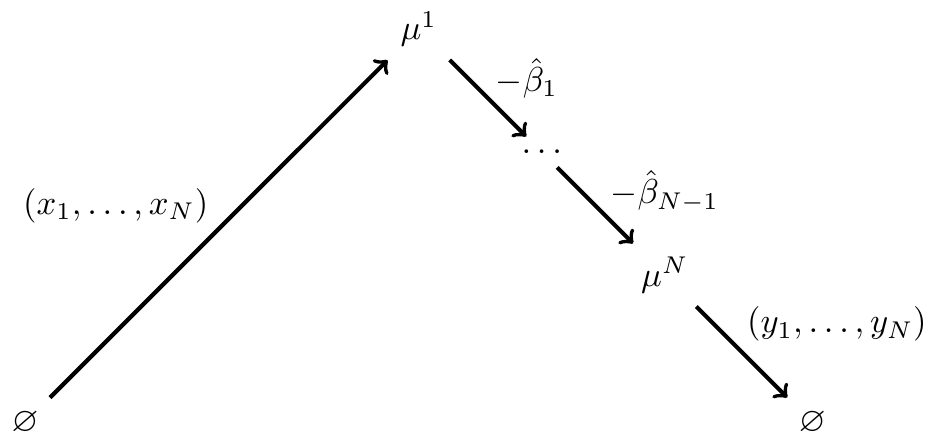}
		\caption{Graphical representation of the Schur process 
		\eqref{eq:Schur_process_from_Grothendieck}.
        Arrows indicate 
	    the diagram inclusion relations.}
		\label{fig:Schur_process}
	\end{figure}

\begin{proof}[Proof of \Cref{prop:Schur_correlation_kernel_via_integrals}]
	It is well-known 
	that 
	determinantal correlation kernels for the 
	Schur measures and processes
	have double contour integral form,
	see 
	\cite{okounkov2001infinite},
	\cite{okounkov2003correlation}.
	Namely,
    the generating function for the Schur process kernel has the form
	\begin{equation}
        \label{eq:Schur_kernel_generating_function_OR}
		\sum_{a,b\in \mathbb{Z}}
		K^{\textnormal{Schur process}}(a,t;b,s)\ssp
		z^{a}w^{-b-1}=
		\frac{1}{z-w}\frac{\Phi(t,z)}{\Phi(s,w)},
	\end{equation}
	where $\Phi(t,z)$ are is a function which is read off from 
	the specializations in the Schur process,
	and the generating series in 
	\eqref{eq:Schur_kernel_generating_function_OR} is expanded differently depending on whether $t\le s$ or $t>s$.
	This difference in expansion is that we assume either $|z|>|w|$ or $|z|<|w|$, which can be ultimately traced back to the normal ordering of the fermionic operators $\psi(z),\psi^*(w)$ in the notation of \cite[Section~2.3.4]{okounkov2003correlation}.
    Formula \eqref{eq:Schur_kernel_generating_function_OR} is the same as
    \cite[Theorem~1]{okounkov2003correlation}, up to switching from half-integers to integers in the indices $a,b$.

    Let us remark that it is not immediate how to adapt the generating function \eqref{eq:Schur_kernel_generating_function_OR} to a particular specialization of the Schur process (that is, how to select the integration contours to pick out the correct coefficients). In general, one could use the contour integrals from \cite{Johansson2000} (see also \cite[Remark 2 after Theorem 5.3]{BorodinGorinSPB12}) or \cite[Theorem 2.2]{borodin2005eynard}, but here for convenience let us briefly record a ``user's manual'' for such an adaptation. There are three principles:
    \begin{enumerate}[$\bullet$]
			\item First, start with the Schur measures ($t=s$). 
				By \cite[Theorem~2]{okounkov2001infinite}, the contours must satisfy $|z|>|w|$ for $t=s$.
			\item Second, on the integration contours for all $t,s$, all denominators in the integrand should expand as geometric series in a natural way as $\frac{1}{1-\xi}=\sum_{n=0}^{\infty}\xi^n$.
			\item The first two principles allow to select the integration contours for $t=s$, and it only remains to determine their ordering ($|z|>|w|$ or
				$|z|<|w|$) for $t \ne s$.
				This is done by inspecting how the specializations of the Schur measures at $t=s$ change with $t$.
    \end{enumerate}

    Let us implement these principles for our Schur process \eqref{eq:Schur_process_from_Grothendieck}.
	Its
	graphical representation 
	is given in \Cref{fig:Schur_process}.
	Each $\mu^t$ distributed
	as the Schur measure with probability weights
	\begin{equation}
		\label{eq:Schur_measure}
		\propto
		s_{\mu^{t}}(x_1,\ldots,x_N )
		s_{\mu^t}(-\hat \beta_t,\ldots,-\hat \beta_{N-1};y_1,\ldots,y_N ).
	\end{equation}
	Here the notation $-\hat\beta_i$ 
	means that these are
	``dual'' parameters, that is, they correspond to 
	transpositions of the Young diagrams. Moreover,
	we unified a number of these ``dual'' parameters
	with the usual parameters $y_i$ in the 
	second Schur function
	(see, e.g., \cite[Section~2]{BorodinGorinSPB12} for details).
	The weights \eqref{eq:Schur_measure}
	follows from the weights
	\eqref{eq:Schur_process_from_Grothendieck}
	and the skew Cauchy identity for Schur functions.

	Now, from \eqref{eq:Schur_measure} and \cite[Theorem~2]{okounkov2001infinite}, we see that the functions in the integrand for $t=s$ are given by 
	\begin{equation}
		\label{eq:Kernel_Schur_proof_new}
		F_t(z)=\frac{H_{(x_1,\ldots,x_N )}(z)}
		{
		H_{-\hat \beta_t,\ldots,-\hat \beta_{N-1};
		y_1,\ldots,y_N }(z^{-1})}=
		\prod_{i=1}^{N}\frac{1-z^{-1}y_i}{1-zx_i}\ssp\prod_{r=t}^{N-1}
		\frac1{1-\beta_r z^{-1}},
	\end{equation}
	where $H_\rho(z)=\sum_{n\ge0}z^n\ssp s_{(n)}(\rho)$ 
	is the single-variable Cauchy kernel for a specialization
	$\rho$ of Schur functions. 
	The second equality in \eqref{eq:Kernel_Schur_proof_new}
	follows from the Cauchy identity, and is precisely the expression \eqref{eq:F_t_function_for_kernel} for $F_t(z)$.
	Thus, using the first two principles above, we get all the conditions on the contours in our \Cref{prop:Schur_correlation_kernel_via_integrals} for $t=s$. In particular, the second condition $|\beta_r|<|z|<x_i^{-1}$,
	$|w|>y_i$ follows from requiring the expansion of
	\begin{equation*}
		\frac{F_t(z)}{F_s(w)}=
		\prod_{i=1}^{N}\frac{1-z^{-1}y_i}{1-zx_i}
		\prod_{i=1}^{N}
		\frac
		{1-wx_i}
		{1-w^{-1}y_i}
		\ssp
		\frac{
		\prod_{r=s}^{N-1}
		(1-\beta_r w^{-1})
		}{
		\prod_{r=t}^{N-1}
		(1-\beta_r z^{-1})
		}
	\end{equation*}
	as geometric series.
	Observe that to get the integral \eqref{eq:Kernel_Schur},
	we also needed to shift the indices $(a,b)$
	by $N-\frac12$
	compared to formulas in \cite{okounkov2001infinite}, \cite{okounkov2003correlation}. Indeed, 
	in these references the point configuration associated to a partition 
	$\mu$ is $\left\{ \mu_i-i+\frac12 \right\}_{i\in \mathbb{Z}_{\ge1}}$,
	while we work with
	$\left\{ \mu_i+N-i \right\}_{i=1}^{N}$.

	Extending our formula \eqref{eq:Kernel_Schur} to $t \ne s$ in a natural way leaves only the 
	question of the ordering of the integration contours
	($|z|>|w|$ or $|z|<|w|$)
	for $t\ne s$. This
	can be resolved by comparing \eqref{eq:Kernel_Schur_proof_new} with \cite[(20)]{okounkov2003correlation}. We see that 
	$H_{-\hat \beta_t,\ldots,-\hat \beta_{N-1}; y_1,\ldots,y_N }(z^{-1})$ should be matched to the product $\prod_{m<t}\phi^+[m](z^{-1})$. 
	In the latter product, increasing $t$ will \emph{increase} the number of factors, which is opposite to how the number of factors depends on $t$ in \eqref{eq:Kernel_Schur_proof_new}. Thus, we must choose $|z|>|w|$ for $t\le s$, which is opposite to \cite[Theorem~1]{okounkov2003correlation}. This completes the proof.
\end{proof}

\section{Absence of determinantal structure}
\label{sec:non_determinantal}

The Grothendieck measure
$\mathscr{M}_{\vec\beta;\mathrm{Gr}}$
\eqref{eq:Grothendieck_measure_definition_complete} has
probability weights expressed as products of two
determinants. This structure is very similar to that of
biorthogonal ensembles
\eqref{eq:biorthogonal_ensembles_classical}, which are
well-known determinantal point processes. However, this
section shows that the Grothendieck measures are \emph{not}
determinantal point processes. This question is deeply
linked to the \emph{principal minor assignment problem} from
linear algebra and algebraic geometry.
We describe this problem 
in \Cref{sub:principal_minor},
and discuss its
long history in \Cref{sub:history}.
Then in
\Cref{sub:Nanson_det_test_derivation}, we present a
self-contained derivation of a determinantal test 
for minors of a $4\times 4$ matrix
originally
obtained by Nanson in 1897 \cite{nanson1897xlvi}, and in
\Cref{sub:higher_order_Nanson} we extend this test
to matrices of arbitrary size. Finally, in
\Cref{sub:applying_Nanson_test}, we apply the original
Nanson's test to show that the Grothendieck measures are not
determinantal.

\subsection{Principal minor assignment problem}
\label{sub:principal_minor}

Let $A$ be an $n\times n$ complex matrix. 
To it, we associate $2^{n}$
principal minors
$A_I=\det[A_{i_a,i_b}]_{a,b=1}^{|I|}$,
where $I$ runs over all subsets of $\left\{ 1,\ldots,n \right\}$, 
and $|I|$ is the number of elements in $I$. This includes the 
empty minor $A_{\varnothing}=1$. 
The map $\mathbb{C}^{n^2}\to \mathbb{C}^{2^n}$, 
$A\to ( A_I )_{I\subseteq \left\{ 1,\ldots,n \right\}}$,
is called the \emph{affine principal minor map}. 
The (affine) \emph{principal minor assignment problem}
\cite{holtz2002open}, \cite{lin2009polynomial}
aims to characterize the image 
under this map in $\mathbb{C}^{2^n}$.
Denote this image by $\mathcal{A}_n\subset \mathbb{C}^{2^n}$. 
This complex algebraic
variety is closed and has dimension $n^2-n+1$
\cite{stouffer1924independence},
\cite{lin2009polynomial}.

For $n\le 3$, the dimension of $\mathcal{A}_n$ is equal to $2^n-1$,
(full available dimension because $A_{\varnothing}=1$), but 
starting with $n=4$, $\mathcal{A}_n$ becomes very complicated.
Indeed, by \cite[Theorem 2]{lin2009polynomial},
the prime ideal of the (13-dimensional) variety $\mathcal{A}_4$ 
is minimally generated by 65 polynomials of degree 12 
in the $A_I$'s.

\medskip

Let us translate the principal minor assignment problem
into the language of point processes.
Let $\mathscr{M}$ be a point process on $\mathbb{Z}_{\ge0}$, that is, 
a
probability measure 
on point configurations in $\mathbb{Z}_{\ge0}$.
This measure
may have complex weights, 
but has to be normalized to have total probability mass $1$,
and has to be bounded in absolute value by a nonnegative probability
measure on point configurations in $\mathbb{Z}_{\ge0}$.
The base space for the point process may be arbitrary and 
is not necessarily finite, and here we take $\mathbb{Z}_{\ge0}$ 
for an easier future application. 
For each finite subset $I\subset\mathbb{Z}_{\ge0}$ consider the correlation function
\begin{equation*}
	\rho_I = \mathscr{M}\left( \textnormal{the random point configuration contains
	all points from $I$} \right).
\end{equation*}
It is natural to ask whether the 
point process $\mathscr{M}$ is determinantal, that is, whether
there exists a kernel $K(x,y)$, $x,y\in \mathbb{Z}_{\ge0}$,
such that for any finite $I\subset \mathbb{Z}_{\ge0}$ we have
$\rho_I=\det[K(a,b)]_{a,b\in I}$.
A clear necessary condition for the process to be determinantal
is as follows:

\begin{proposition}
	\label{prop:det_pp_and_minor_assignment}
	If the process $\mathscr{M}$ is determinantal, then 
	for any $n\ge1$ and any $n$-point subset $\mathsf{J}\subset \mathbb{Z}_{\ge0}$,
	the vector $(\rho_I\colon I\subseteq \mathsf{J})
	\in \mathbb{C}^{2^n}$ belongs to the image $\mathcal{A}_n$ under the 
	principal minor map.
\end{proposition}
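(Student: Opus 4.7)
The plan is to argue that this proposition is a direct consequence of unpacking the two definitions involved: determinantal point process on one side, and the affine principal minor map on the other. The bridge between them is simply the restriction of the correlation kernel to the finite set $\mathsf{J}$.

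First I would fix $n \geq 1$ and an $n$-point subset $\mathsf{J} = \{j_1 < j_2 < \ldots < j_n\} \subset \mathbb{Z}_{\geq 0}$, and assume that $\mathscr{M}$ is determinantal with kernel $K \colon \mathbb{Z}_{\geq 0}^2 \to \mathbb{C}$. I would then introduce the $n \times n$ complex matrix
\begin{equation*}
    A \coloneqq \bigl[ K(j_a, j_b) \bigr]_{a,b=1}^{n} \in \mathbb{C}^{n^2},
\end{equation*}
which is a candidate preimage of the vector $(\rho_I \colon I \subseteq \mathsf{J})$ under the affine principal minor map.

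Next, I would verify entry by entry that the principal minors of $A$ recover the correlations. For any subset $S \subseteq \{1, \ldots, n\}$, the principal minor $A_S = \det[A_{i_a,i_b}]_{a,b \in S}$ equals $\det[K(j_a, j_b)]_{a,b \in S}$, which by the defining property of a determinantal process is precisely $\rho_I$ for $I = \{j_a : a \in S\} \subseteq \mathsf{J}$. The correspondence $S \leftrightarrow I$ is a bijection between subsets of $\{1,\ldots,n\}$ and subsets of $\mathsf{J}$. The only case to handle separately is $S = \varnothing$: the convention $A_\varnothing = 1$ matches $\rho_\varnothing = 1$, which in turn is the total mass of the (normalized) measure $\mathscr{M}$.

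Therefore the tuple $(\rho_I \colon I \subseteq \mathsf{J})$, viewed as a vector in $\mathbb{C}^{2^n}$ indexed via the bijection $I \leftrightarrow S$, coincides with the image of $A$ under the affine principal minor map, and hence lies in $\mathcal{A}_n$. The argument is essentially tautological once one identifies the right $n\times n$ matrix, so there is no real obstacle; the only point worth being explicit about is the bookkeeping matching subsets of $\mathsf{J}$ with subsets of $\{1,\ldots,n\}$, and the empty-minor normalization, which is compatible precisely because $\mathscr{M}$ is assumed to be a (possibly complex-weighted) probability measure.
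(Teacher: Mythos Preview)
Your proof is correct and is exactly the intended argument: the paper states this proposition as ``a clear necessary condition'' without giving a proof, and your write-up simply makes explicit the definitional observation that restricting the kernel $K$ to $\mathsf{J}\times\mathsf{J}$ yields an $n\times n$ matrix whose principal minors are the correlation functions $\rho_I$.
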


Thus, 
if for some $n$ and some $n$-point $\mathsf{J}\subset \mathbb{Z}_{\ge0}$ the vector
$(\rho_I\colon I\subseteq \mathsf{J})
\in \mathbb{C}^{2^n}$ \emph{does not} belong to $\mathcal{A}_n$, then 
the process $\mathscr{M}$ is \emph{not determinantal}.
Due to the complicated nature of $\mathcal{A}_n$ for $n\ge4$, 
checking that a vector belongs to $\mathcal{A}_n$
is hard. However, to show that some vector $(\rho_I)$
\emph{does not} belong to $\mathcal{A}_n$,
it suffices to find a polynomial in the ideal of $\mathcal{A}_n$ that does not 
vanish on $(\rho_I)$. This leads us to the following definition:

\begin{definition}
	\label{def:determinantal_test}
	Fix $n\ge4$. A \emph{determinantal test} of
	order $n$ is any element in the ideal of $\mathcal{A}_n$, that is,
	a polynomial 
	in the indeterminates $(A_I\colon I\subset \left\{ 1,\ldots,n \right\})$
	which vanishes if the $A_I$'s are principal minors of some matrix $A$.
\end{definition}

Thus, to show that the process $\mathscr{M}$ is not
determinantal, it suffices to show that there exists
$\mathsf{J}\subset \mathbb{Z}_{\ge0}$ and a determinantal
test which does not vanish on the vector $(\rho_I\colon I
\subset \mathsf{J})$. Let us describe an example of such a
test of order $4$ which we call the \emph{Nanson's test}
as it first appeared in 1897 
in \cite{nanson1897xlvi}.
First, we need another definition:
\begin{definition}
	\label{def:cluster_cycle_sum}
	Let 
	$A=(a_{ij})$ be a complex $n\times n$ matrix, 
	and fix
	$I\subseteq \left\{ 1,\ldots,n \right\}$ with $|I|=k\ge2$.
	For a $k$-cycle $\pi\in S_n$ with support $I$ (there are $(k-1)!$ such cycles), 
	define
	$t_\pi(A) \coloneqq \prod_{i\colon i\ne \pi(i)}a_{i,\pi(i)}$.
	Let the \emph{cycle-sum}
	\cite{lin2009polynomial}
	be 
	\begin{equation}
		\label{eq:cycle_sum}
		T_I \coloneqq \sum_{\textnormal{all $k$-cycles $\pi$
		with support $I$}}t_\pi(A).
	\end{equation}
\end{definition}

The cycle-sums 
are the same as 
\emph{cluster functions} in the terminology of 
\cite{tracy1998correlation}, and they can be expressed through the 
principal minors $A_I$ as follows:
\begin{equation}
	\label{eq:T_via_principal_minors}
	T_I=\sum_{I=I_1\sqcup \ldots\sqcup I_m }(-1)^{k+m}(m-1)!\ssp
	A_{I_1}\cdots 
	A_{I_m},
\end{equation}
where the sum is taken over all set partitions of $I$ into exactly $m$
nonempty parts.
For example, 
\begin{equation*}
	\begin{split}
		T_{ \left\{ 1,2,3 \right\}}
		&=a_{12}a_{23}a_{31}+a_{13}a_{21}a_{32}
		\\&=
		2
		A_{ \left\{ 1 \right\}}
		A_{ \left\{ 2 \right\}}
		A_{ \left\{ 3 \right\}}
		-\left( 
			A_{ \left\{ 1 \right\}}A_{ \left\{ 2,3 \right\}}
			+
			A_{ \left\{ 2 \right\}}A_{ \left\{ 1,3 \right\}}
			+
			A_{ \left\{ 3 \right\}}A_{ \left\{ 1,2 \right\}}
		\right)
		+
		A_{ \left\{ 1,2,3 \right\}}.
	\end{split}
\end{equation*}

\begin{definition}
	\label{def:Nanson}
	The \emph{Nanson's determinantal test}
	is a polynomial
	$\mathfrak{N}_4$ of order $4$
	in the indeterminates $T_I$
	which has the form
	\begin{equation}
		\label{eq:Nanson_test}
		\mathfrak{N}_{4}=
		\frac{1}{2}\det
		\begin{bmatrix}
			T_{123} T_{14} & T_{124} T_{13} & 
			T_{134} 
			T_{12} 
			& 
			2 T_{12} T_{13} T_{14} T_{234}+T_{123} T_{124} T_{134} 
			\\
			T_{124} T_{23} & T_{123} T_{24} &
			T_{234} 
			T_{12} 
			& 
			2 T_{12} T_{23} T_{24}T_{134} +T_{123} T_{124} T_{234} 
			\\
			T_{134} T_{23} &  T_{234}T_{13} & T_{123} T_{34} &
			2  T_{13} T_{23} T_{34}T_{124} 
			+
			T_{123} T_{134} T_{234}
			\\
			T_{234}T_{14} & T_{134} T_{24} & T_{124} T_{34} & 
			2 T_{14} T_{24} T_{34} T_{123}+T_{124} T_{134} T_{234} 
		\end{bmatrix},
	\end{equation}
	where we abbreviated $T_{ij}=T_{ \left\{ i,j \right\} }$, and so on.
	By \eqref{eq:T_via_principal_minors},
	$\mathfrak{N}_4$ is also a 
	polynomial in the indeterminates $A_I$.
\end{definition}

One readily verifies
(for example, using computer algebra)
that $\mathfrak{N}_4$ is indeed a determinantal test:

\begin{proposition}
	\label{prop:Nanson_test_works}
	If for all $I\subseteq \left\{ 1,2,3,4 \right\}$
	we replace the indeterminates
	$T_I$'s in \eqref{eq:Nanson_test} by the 
	cycle-sums 
	\eqref{eq:cycle_sum}
	coming from a 
	$4\times 4$ matrix $A$, then the polynomial
	$\mathfrak{N}_4$ \eqref{eq:Nanson_test} vanishes identically.
\end{proposition}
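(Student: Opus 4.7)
My plan is to reduce the statement to a polynomial identity in the $16$ entries $a_{ij}$ of the $4\times 4$ matrix $A$, and then verify this identity by a finite symbolic computation. By \eqref{eq:cycle_sum}, each $T_I$ is a specific polynomial in the $a_{ij}$'s: for pairs, $T_{\{i,j\}} = a_{ij}a_{ji}$, and for triples, $T_{\{i,j,k\}} = a_{ij}a_{jk}a_{ki} + a_{ik}a_{kj}a_{ji}$. After substituting these expressions into the matrix of \eqref{eq:Nanson_test} and expanding the determinant, one obtains a polynomial in the $a_{ij}$ of total degree $24$, and the claim is that this polynomial is identically zero.

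I would carry out the verification in any computer algebra system (e.g., SageMath, Mathematica, or Macaulay2): introduce the $a_{ij}$'s as indeterminates in a polynomial ring, form the ten cycle-sums $T_I$ with $|I|\in\{2,3\}$, assemble the matrix of \eqref{eq:Nanson_test}, compute its determinant, and confirm that the expanded and simplified result is $0$. Although the intermediate expressions contain thousands of monomials, the calculation terminates quickly. To lighten the check by hand, one can exploit multihomogeneity: the polynomial decomposes as a sum of pieces each homogeneous in the individual indeterminates $a_{ij}$, and each piece can be verified in isolation.

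Before running the computation, it is instructive to record the combinatorial structure of \eqref{eq:Nanson_test}. The first three columns are indexed by the three perfect matchings of $\{1,2,3,4\}$: in the column associated to matching $\mu$, the entry in row $r$ equals $T_{\{1,2,3,4\}\setminus\{r'\}} \cdot T_{\{r,r'\}}$, where $r'$ is the $\mu$-partner of $r$. In the fourth column, row $r$ combines the triple $T_{\{1,2,3,4\}\setminus\{r\}}$ with the three edges incident to $r$, plus the product of the three triples containing $r$. This symmetry suggests a hidden factorization or a row dependence with coefficients rational in the $T_I$'s, and the main obstacle to a purely structural proof is to make such a dependence explicit. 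For instance, the natural linear combination of rows with coefficients $c_r = \pm T_{\{1,2,3,4\}\setminus\{r\}}$ (with appropriate signs) makes the sums along the first two columns vanish but already fails on the third, so any valid row dependence must also involve the pair-sums $T_{\{i,j\}}$. Since \Cref{thm:non_determinantal_intro} requires only the \emph{existence} of a determinantal test that fails to vanish at a concrete parameter choice, the direct computer algebra verification is complete and adequate.
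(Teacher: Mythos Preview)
Your proposal is correct and matches the paper's own treatment: the paper simply states that the identity is ``readily verified (for example, using computer algebra)'' without giving further detail, and your plan of substituting the cycle-sums $T_I$ as polynomials in the off-diagonal entries $a_{ij}$ and expanding the determinant symbolically is exactly such a verification. Your additional remarks on the combinatorial structure of the matrix and the attempted row dependence are a nice complement, even though (as you note) they do not yield a fully structural proof.
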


We apply Nanson's test $\mathfrak{N}_4$ to show that the
Grothendieck measures are not determinantal in
\Cref{sub:applying_Nanson_test} below. 

\subsection{On the history of the principal minor assignment problem}
\label{sub:history}

Let us briefly
discuss the rich history and variants of the principal minor
assignment problem. Within this history, we can observe at
least two instances where similar questions were
independently formulated and addressed in the context of
algebra (on the original principal minor assignment) and
probability (concerning determinantal processes). 
We hope that 
these two research avenues will become increasingly
aware of one another.

\medskip

The problem itself dates to the late 19th century work of 
MacMahon \cite{MacMahon1894}, with initial results 
due to Muir \cite{Muir1894} \cite{Muir1898} and 
Nanson \cite{nanson1897xlvi}.
In particular, Nanson 
has partially solved the $4\times 4$ principal
minor assignment problem, and
obtained the determinantal test 
$\mathfrak{N}_4$ 
\eqref{eq:Nanson_test}.
He also obtained four other tests
algebraically independent from
$\mathfrak{N}_4$ (which
enter the list of 65 polynomials in Lin--Sturmfels
\cite{lin2009polynomial}).
The question of relations on principal minors 
is investigated by Stouffer \cite{stouffer1924independence},
and in particular
he showed that the dimension of $\mathcal{A}_n$ is $n^2-n+1$.

\medskip

Another question related to the principal minor assignment
problem, when it has a solution, concerns the relationship
between two $n\times n$ complex matrices $A,B$ with the same
principal minors. Under various natural conditions, it has
been shown that the matrix $A$ should be diagonally
conjugate either to $B$, or to $B^{\mathrm{transpose}}$. Here
``diagonally conjugate'' means $A=DBD^{-1}$, where
$D$ is a nondegenerate diagonal matrix. This question was
first addressed in the context of the principal minors
assignment problem by Loewy \cite{loewy1986principal}. More
recently, Stevens \cite{stevens2021equivalent} and Mantelos
\cite{mantelos2023classification} investigated essentially
the same question within the context of determinantal
processes, seemingly unaware of Loewy's work.

\medskip

Griffin--Tsatsomeros \cite{griffin2006principal} proposed
algorithms for finding the solution
of the principal minor assignment problem (that is, the matrix $A$),
which are computationally fast for
particular subclasses of matrices. While this does not
yield explicit polynomial determinantal tests, an algorithm 
can be used to (numerically) demonstrate that a
point process is not determinantal.
In our application to Grothendieck measures in \Cref{sub:applying_Nanson_test} below
we do not use an algorithm like in \cite{griffin2006principal}, but rather perform a symbolic computation 
based on the Nanson's test $\mathfrak{N}_4$.

\medskip

A particularly well-understood case of the
principal minor assignment problem assumes that the initial
complex $n\times n$ matrix $A$ is Hermitian symmetric.
Holtz--Sturmfels \cite{holtz2007hyperdeterminantal} and
Oeding \cite{oeding2011set} use the additional
hyperdeterminantal structure of the variety formed by
principal minors to solve the assignment problem
set-theoretically. More recently, Al Ahmadieh and Vinzant
\cite{alahmadieh2021characterizing},
\cite{alahmadieh2022determinantal} considered the principal
minor assignment problem over other rings and explored
connections to stable polynomials. These latter works
represent the current state of the art of the principal
minor assignment problem from an algebraic perspective. In
particular, \cite[Theorem~8.1]{alahmadieh2022determinantal} is a strong and unexpected
negative result.

\medskip

Finally, let us mention that there are several natural
generalizations of the principal minor assignment problem,
as considered by Borodin--Rains \cite[Section 4]{borodin2005eynard} 
and independently by Lin--Sturmfels
\cite{lin2009polynomial} (unaware at the time
of the work by Borodin--Rains). 
These variants allow more general
\emph{conditional} and/or \emph{Pfaffian} structure of the
correlation functions $\rho_I$. A
conditional determinantal process, by definition, has
correlation functions $\rho_I=\det\left[ K(a,b)
\right]_{a,b\in I\cup S}$, where $S=\left\{ n+1,\ldots,n+m
\right\}$, and $I \subseteq\left\{ 1,\ldots, n \right\}$.
In other words, it is a usual determinantal process on
$\left\{ 1,\ldots,n,n+1,\ldots,n+m  \right\}$ conditioned to
have particles at each of the points $n+1,\ldots,n+m $.  In
the terminology of \cite{lin2009polynomial}, the conditional
determinantal structure is the same as the \emph{projective
principal minor assignment problem}, a more natural setting
for algebraic geometry.  The projective variety analog of
$\mathcal{A}_n$ for $n=4$ is more complicated, with 718
generating polynomials.  The Pfaffian and conditional
Pfaffian structures (considered in \cite{borodin2005eynard};
they are motivated, in particular, by real and quaternionic
random matrix ensembles) are defined similarly to the
determinantal ones, but with determinants replaced by
Pfaffians. 
The $n=4$ conditional Pfaffian (projective)
variety analog of $\mathcal{A}_n$
is even more complicated than the determinantal one,
and experimentation suggests \cite{borodin2005eynard} that a
corresponding test could have degree 1146.

It would be interesting to develop determinantal and
Pfaffian tests for conditional processes (as well as for
further generalizations involving, for example,
$\alpha$-determinants and permanents), but we leave these
directions for future work.

\subsection{A self-contained derivation of Nanson's determinantal test}
\label{sub:Nanson_det_test_derivation}

Here we present a self-contained derivation of Nanson's
determinantal test polynomial $\mathfrak{N}_4$
\eqref{eq:Nanson_test}. This argument differs slightly from
Nanson's original work \cite{nanson1897xlvi} and was
obtained independently by the second author (unaware of the
principal minor assignment problem) over a decade ago
\cite{Petrov2011det_test_unpublished}. Here we see another
instance of the disconnect between the principal minor
assignment problem and determinantal processes
(complementing the two cases discussed in \Cref{sub:history}
above). In \Cref{sub:higher_order_Nanson} below, we discuss
how our derivation of $\mathfrak{N}_4$ can be adapted to
obtain Nanson-like higher-order determinantal tests.

We aim to explain where the polynomial $\mathfrak{N}_4$
\eqref{eq:Nanson_test} comes from. Checking that it is
indeed a determinantal test is a direct verification
(\Cref{prop:Nanson_test_works}), and we do not focus on this
here.

Assume that we are given the cluster functions $T_I$
\eqref{eq:T_via_principal_minors}, where $I$ runs over
subsets of $\left\{ 1,2,3,4 \right\}$ with $\ge2$ elements.
The $T_I$'s are polynomials in the minors $A_I$, but
working with the $T_I$'s is much more convenient. Let us use
the $T_I$'s to try finding the matrix elements $a_{ij}$ of
the original matrix $A$.

Throughout the rest of this section, we will abbreviate expressions
like $T_{ \left\{ 1,2 \right\}}$ as $T_{12}$. Note that all the 
$T_I$'s are symmetric in the indices.
Assume that $a_{1i}\ne 0$
for all $i=2,3,4$, and conjugate the matrix by the diagonal
matrix with the entries
$d_i=\mathbf{1}_{i=1}+a_{1i}\mathbf{1}_{i\ne 1}$. Then we
have for the conjugated matrix (denoted by $\tilde A=(\tilde a_{ij})$):
\begin{equation}
	\label{eq:conjugated_correlation_matrix}
	\tilde a_{1i}=\frac{d_1}{d_i}\ssp a_{1i}=1,\qquad i=2,3,4.
\end{equation}
With this notation, we have $T_{1i}=\tilde a_{i1}$ and
\begin{equation}
 \label{eq:equations_for_kernel}
 T_{ij}=\tilde a_{ij}\tilde a_{ji},\qquad 
 T_{1ij}=\tilde a_{ij}T_{1j}+\tilde a_{ji}T_{1i}.
\end{equation}
The second identity in \eqref{eq:equations_for_kernel}
is by the definition of the cluster
function \eqref{eq:cycle_sum}, simplified thanks to
\eqref{eq:conjugated_correlation_matrix}.
Equations \eqref{eq:equations_for_kernel} allow to find
$\tilde a_{ij}T_{1j}$ and $\tilde a_{ji}T_{1i}$ as two
distinct roots of a quadratic equation. We thus have
\begin{equation}
 \label{eq:equations_for_kernel_solution_quadratic}
 \tilde a_{ij}T_{1j}=\frac{T_{1ij}+R_{ij}}{2},
 \qquad 
 \tilde a_{ji}T_{1i}=\frac{T_{1ij}- R_{ij}}{2},
\end{equation}
where we denoted $R_{ij}\coloneqq \pm
\sqrt{T_{1ij}^2-4T_{1i}T_{1j}T_{ij}}$. Observe that $R_{ij}$
contains an unknown sign that we cannot determine a priori
(it may also depend on $i$ and $j$), but up to sign $R_{ij}$
is symmetric in $i,j$. 

Let us substitute
\eqref{eq:equations_for_kernel_solution_quadratic} into the following
identity (which is again an instance of
\eqref{eq:cycle_sum}):
\begin{equation*}
 T_{234}=\tilde a_{23}\tilde a_{34}\tilde a_{42}+
 \tilde a_{24}\tilde a_{43}\tilde a_{32}.
\end{equation*}
As a result, we obtain the following identity involving
three
square roots $R_{23},R_{34},R_{24}$ with unknown signs:
\begin{equation}
 \label{eq:nanson4_derivation_1}
 \begin{split}
 8T_{12}T_{13}T_{14}T_{234}&=
 \left( T_{123}+R_{23} \right)
 \left( T_{124}+R_{24} \right)
 \left( T_{134}+R_{34} \right)
 \\&\hspace{80pt}+
 \left( T_{123}-R_{23} \right)
 \left( T_{124}-R_{24} \right)
 \left( T_{134}-R_{34} \right).
 \end{split}
\end{equation}
Note that \eqref{eq:nanson4_derivation_1} does not contain
the matrix elements $\tilde a$. Thus, it is an algebraic (but not
yet polynomial) identity on the cluster functions $T_I$.
Simplifying \eqref{eq:nanson4_derivation_1}, we see that
\begin{equation}\label{eq:nanson4_derivation_2}
 4T_{12}T_{13}T_{14}T_{234}-T_{123}T_{124}T_{143}
 -T_{123}R_{24}R_{34}
 -
 T_{124}R_{23}R_{34}
 -
 T_{134}R_{23}R_{24}
 =0.
\end{equation}
The left-hand side contains three summands with
irrationalities $R_{24}R_{34}, R_{23}R_{34}$, and
$R_{23}R_{24}$ with uncertain signs. By choosing all
possible eight combinations of the signs for
$R_{23},R_{34},R_{24}$, we see that there are only four
possible combinations of signs in
\eqref{eq:nanson4_derivation_2}. Thus, by multiplying
together all these four expressions with different signs, we can
get rid of irrationality and obtain a polynomial in the
$T_I$'s:
\begin{equation}
 \label{eq:nanson4_derivation_3_final}
	\begin{split}
		&
		\bigl(
		4T_{12}T_{13}T_{14}T_{234}-T_{123}T_{124}T_{143}
		-T_{123}R_{24}R_{34}
		-
		T_{124}R_{23}R_{34}
		-
		T_{134}R_{23}R_{24}
		\big)
		\\&\hspace{10pt}
		\times
		\bigl(
		4T_{12}T_{13}T_{14}T_{234}-T_{123}T_{124}T_{143}
		+T_{123}R_{24}R_{34}
		+
		T_{124}R_{23}R_{34}
		-
		T_{134}R_{23}R_{24}
		\big)
		\\&\hspace{10pt}
		\times
		\bigl(
		4T_{12}T_{13}T_{14}T_{234}-T_{123}T_{124}T_{143}
		-T_{123}R_{24}R_{34}
		+
		T_{124}R_{23}R_{34}
		+
		T_{134}R_{23}R_{24}
		\big)
		\\&\hspace{10pt}
		\times
		\bigl(
		4T_{12}T_{13}T_{14}T_{234}-T_{123}T_{124}T_{143}
		+
		T_{123}R_{24}R_{34}
		-
		T_{124}R_{23}R_{34}
		+
		T_{134}R_{23}R_{24}
		\big)
		=0.
	\end{split}
\end{equation}
Clearly, expanding the left-hand side of
\eqref{eq:nanson4_derivation_3_final} 
squares all the quantities $R_{ij}$.
Thus, the resulting identity is \emph{polynomial}
in the $T_I$'s, and, moreover, 
all unknown signs present in the $R$'s 
disappear.

One can check (for example, using
computer algebra) that
the resulting polynomial 
\eqref{eq:nanson4_derivation_3_final}
in the $T_I$'s has 19 summands,
and it is symmetric in the indices $1,2,3,4$.
One can also verify
that this polynomial
divided by the common factor $256T_{12}^2T_{13}^2T_{14}^2$
is exactly the same as
the Nanson's test $\mathfrak{N}_4$ \eqref{eq:Nanson_test}. 
This concludes our derivation of the Nanson's determinantal test of order four.

\subsection{Procedure for higher-order Nanson tests}
\label{sub:higher_order_Nanson}

Adapting the derivation of the test $\mathfrak{N}_4$ given
in \Cref{sub:Nanson_det_test_derivation} above,
one can
produce concrete determinantal tests $\mathfrak{N}_{n}$
for minors of general $n\times n$ matrices, 
where $n\ge 4$. Let us explain the necessary steps for general $n$
without going into full detail.
We have from \eqref{eq:cycle_sum}:
\begin{equation}
	\label{eq:T_via_principal_minors_0}
	T_{2,3,\ldots,n }=
	\sum_{\textnormal{$(n-1)$-cycles $\sigma$ on $\{2,\ldots,n\}$}}
	\tilde a_{\sigma(2)\sigma(3)}\ldots \tilde a_{\sigma(n-1)\sigma(n)}\tilde
	a_{\sigma(n)\sigma(2)}.
\end{equation}
For every $i<j$, let us substitute 
the solutions \eqref{eq:equations_for_kernel_solution_quadratic},
so \eqref{eq:T_via_principal_minors_0} becomes
\begin{equation}
	\label{eq:T_general_Nansen_as_sum_of_products}
	2^{n-1} T_{12}\ldots T_{1n} T_{2,\ldots,n}
	=
	\sum_{\sigma}
	\prod_{i=2}^{\circlearrowright n}\left( T_{1\sigma(i)\sigma(i+1)}+(-1)^{\mathbf{1}_{\sigma(i)>\sigma(i+1)}}
	R_{\sigma(i)\sigma(i+1)} \right).
\end{equation}
Here the sum is also over $(n-1)$-cycles $\sigma$ on $\left\{ 2,\ldots,n  \right\}$, 
and ``$\circlearrowright$''
means that the 
product is cyclic in the sense that $n+1$ is identified with $2$.
We see that \eqref{eq:T_general_Nansen_as_sum_of_products}
is an algebraic identity 
on the $T_I$'s which does not contain the matrix
elements $\tilde a_{ij}$.

Opening up the parentheses in
\eqref{eq:T_general_Nansen_as_sum_of_products}, one readily
sees that all terms
with an odd number of the factors $R_{ij}$ cancel out, while
the terms with an even number of the factors $R_{ij}$ appear twice.
Therefore, we can continue
\eqref{eq:T_general_Nansen_as_sum_of_products} as
\begin{equation}
	\label{eq:T_general_Nansen_as_sum_of_products_2}
	2^{n-2} T_{12}\ldots T_{1n} T_{2,\ldots,n}
	-
	\sum_{\substack{\textnormal{ 
	non-oriented $(n-1)$-cycles}\\\textnormal{$\tau$ on $\left\{
	2,\ldots,n  \right\}$}}}
	\ \prod_{i=2}^{\circlearrowright n}
	T_{1\tau(i)\tau(i+1)}=\mathsf{RHS}.
\end{equation}
Here $\mathsf{RHS}$ is a sum over non-oriented $(n-1)$ cycles
$\tau$
on $\left\{ 2,\ldots,n  \right\}$, where the summands are
$(n-1)$-fold cyclic products of the quantities $T_{1ij}$ and
$R_{ij}$ with a nonzero even number of the $R$'s, and each
such monomial has coefficient $\pm1$.
More precisely, the sign is determined by the number
of descents $\tau(i)>\tau(i+1)$ in $\tau$ for which the 
monomial contains $R_{\tau(i)\tau(i+1)}$ (and not $T_{1\tau(i)\tau(i+1)}$).

Next, in $\mathsf{RHS}$ there are 
$\binom{n-1}{2}$ possible elements $R_{ij}$, and each of them contains
an unknown sign in front of the square root. 
Let us take the product over those of the $2^{\binom {n-1}2}$ 
possible sign combinations which lead to the different $\mathsf{RHS}$'s.
Expanding this product
removes all irrationalities and unknown signs, and 
produces a polynomial (denoted by $\mathfrak{N}_n$)
in the cluster functions $T_I$,
where $I$ runs over subsets of $\left\{ 1,\ldots,n  \right\}$
with $\ge2$ elements.
We call $\mathfrak{N}_n$ the 
\emph{Nanson-like determinantal test of order $n$}.

For example, for $n=5$ identity
\eqref{eq:T_general_Nansen_as_sum_of_products_2}
has the form
(recall that the quantities $T_{1ij}$ and $R_{ij}$ 
are symmetric in $i,j$):
\begin{align*}
		&8T_{12}T_{13}T_{14}T_{15}T_{2345}
		-T_{24}T_{43}T_{35}T_{52}
		-T_{23}T_{34}T_{45}T_{52}
		-T_{23}T_{35}T_{54}T_{42}
		\\
		&\hspace{60pt}=R_{24} R_{25} R_{34} R_{35} - R_{23} R_{25} R_{34} R_{45} + R_{23} R_{24} R_{35} R_{45} \\
		&\hspace{80pt}+ R_{34} R_{45} T_{123} T_{125} + R_{24} R_{35} T_{125} T_{134} + R_{23} R_{45} T_{125} T_{134}\\
		&\hspace{80pt}+ R_{24} R_{45} T_{123} T_{135} + R_{25} R_{34} T_{124} T_{135} + R_{23} R_{35} T_{124} T_{145}  \\
		&\hspace{80pt} + R_{23} R_{34} T_{125} T_{145} - R_{23} R_{25} T_{134} T_{145} - R_{23} R_{24} T_{135} T_{145}\\
		&\hspace{80pt}- R_{35} R_{45} T_{123} T_{124} - R_{34} R_{35} T_{124} T_{125} - R_{25} R_{45} T_{123} T_{134}\\
		&\hspace{80pt}- R_{25} R_{35} T_{124} T_{134} - R_{23} R_{45} T_{124} T_{135}- R_{24} R_{34} T_{125} T_{135}		\\
		&\hspace{80pt} - R_{24} R_{25} T_{134} T_{135} - R_{25} R_{34} T_{123} T_{145}- R_{24} R_{35} T_{123} T_{145}.
\end{align*}
In the right-hand side, there are 
$2^{\binom 42}=64$ possible signs in the $R_{ij}$'s, 
but they lead to ``only''
32 distinct identities.
Multiplying all these $32$ expressions
similarly to \eqref{eq:nanson4_derivation_3_final}
and recalling the definition of the $R$'s
leads to a polynomial in the $T_I$'s with no irrationality.
This produces the determinantal test $\mathfrak{N}_5$.

\subsection{Application to Grothendieck measures and proof of \texorpdfstring{\Cref{thm:non_determinantal_intro}}{Theorem}}
\label{sub:applying_Nanson_test}

In this subsection we employ the Nanson
determinantal test $\mathfrak{N}_4$ to 
prove \Cref{thm:non_determinantal_intro} from Introduction.
That is, we will show that
the Grothendieck measure on partitions
$\mathscr{M}_{\vec \beta; \mathrm{Gr}}(\lambda)$
\eqref{eq:Grothendieck_measure_definition_complete}
is not determinantal 
as a point process on $\mathbb{Z}_{\ge0}$
with points $\ell_j=\lambda_N+N-j$, $j=1,\ldots,N$.

We focus on the case $N=2$ and look
at correlations $\rho_I^{\mathrm{Gr}}$ of the random point configuration
$\{\ell_1,\ell_2\}=\{\lambda_1+1,\lambda_2\}$ 
for all subsets
$I\subseteq \left\{ 0,1,2,3 \right\}$. 
Moreover, we will set $\beta_1=\beta$, $x_1=x_2=x$, and
$y_1=y_2=y$.
Clearly, $\rho^{\mathrm{Gr}}_I=0$ if $|I|=3$ or $4$. 
Moreover, we have $\rho^{\mathrm{Gr}}_{\varnothing}=1$, and 
for two-point subsets we have (where $i>j$):
\begin{equation}
	\label{eq:rho_Gr_2_point}
	\begin{split}
		\rho^{\mathrm{Gr}}_{ \{i,j \} }
		&=
		\mathscr{M}_{\vec \beta; \mathrm{Gr}}\bigl( (i-1,j) \bigr)
		\\&=
		\frac
		{(1-x y)^4}
		{(1-x\beta )^2}\ssp
		\ssp
		 x^{i+j-1} y^{i+j-2} (\beta  x (i-j-1)-i+j) ((j-i) (y-\beta )-\beta),
	\end{split}
\end{equation}
where we used 
\eqref{eq:Grothendieck_polynomials}--\eqref{eq:Grothendieck_measure_definition_complete},
and took the limits as $x_2\to x_1=x$ and $y_2\to y_1=y$.

To compute one-point correlations, 
we employ
\Cref{prop:correlations_tilted}
and 
the correlation kernel $K^{2d}_{\vec\beta;\mathrm{Gr}}$
of the ambient Schur process
(\Cref{prop:Schur_correlation_kernel_via_integrals}).
We have by \Cref{prop:correlations_tilted} (specifically,
by its particular case \eqref{eq:one_point_correlation})
\begin{equation}
	\label{eq:rho_Gr_1_point_via_Fredholm}
	\begin{split}
		\rho^{\mathrm{Gr}}_{ \left\{ i \right\}}&=
		[z^{1}w]\det
		\Bigl( 
			\mathbf{1}
			-
			(1-z)\chi_{[0,i]}K^{2d}_{\vec\beta;\mathrm{Gr}}(\cdot,1;\cdot,1) \chi_{[0,i)}
			+
			(1-w)
			\chi_{[0,i]}K^{2d}_{\vec\beta;\mathrm{Gr}}(\cdot,1;\cdot,1) \chi_{ \left\{ i \right\}}
		\Bigr)
		\\&\hspace{20pt}+
		[z^{0}w]\det
		\Bigl( 
			\mathbf{1}
			-
			(1-z)\chi_{[0,i]}K^{2d}_{\vec\beta;\mathrm{Gr}}(\cdot,2;\cdot,2) \chi_{[0,i)}
			+
			(1-w)
			\chi_{[0,i]}K^{2d}_{\vec\beta;\mathrm{Gr}}(\cdot,2;\cdot,2) \chi_{ \left\{ i \right\}}
		\Bigr)
		.
	\end{split}
\end{equation}
This yields
formulas for
$\rho^{\mathrm{Gr}}_{ \left\{ i \right\}}$, $i=0,1,2,3$, 
namely,
\begin{align}
	\nonumber
	\rho^{\mathrm{Gr}}_{ \left\{ 0 \right\}}&= 
	1-x^2 y^2
	;
	\\
	\nonumber
	\rho^{\mathrm{Gr}}_{ \left\{ 1 \right\}}
	&=
	x^2 y^2(1-x^2y^2)+\frac{(1-x y)^4}{(1-\beta  x)^2}
	;
	\\
	\label{eq:rho_Gr_1_point}
	\rho^{\mathrm{Gr}}_{ \left\{ 2 \right\}}
	&=
	x^4y^4(1-x^2y^2)+
	\frac{x (1-x y)^4 \left(\beta  (\beta  x-2)+x y^2+y (4-2 \beta  x)\right)}{(1-\beta  x)^2}
	;
	\\
	\nonumber
	\rho^{\mathrm{Gr}}_{ \left\{ 3 \right\}}
	&=
	x^6 y^6(1-x^2y^2)
	\\
	\nonumber
	&\hspace{15pt}+
	\frac{x^2 y (1-x y)^4 \left(x^2 y^3+y \left(\beta ^2 x^2-8 \beta  x+9\right)+2 \beta  (2 \beta  x-3)-2 x y^2 (\beta  x-2)\right)}{(1-\beta  x)^2}
	.
\end{align}
\begin{remark}
	\label{rmk:no_Schur_process_needed}
	These one-point 
	correlation functions
	$\rho^{\mathrm{Gr}}_{ \left\{ i \right\}}$
	can be also computed without using the finite-dimensional Fredholm-like 
	determinants \eqref{eq:rho_Gr_1_point_via_Fredholm}.
	Namely, 
	\begin{equation*}
		\rho^{\mathrm{Gr}}_{ \left\{ i \right\}}
		=
		\sum_{j=0}^{i-1}
		\mathscr{M}_{\vec \beta; \mathrm{Gr}}\bigl( (i-1,j) \bigr)
		+
		\sum_{j=i}^{\infty}
		\mathscr{M}_{\vec \beta; \mathrm{Gr}}\bigl( (j,i) \bigr),
	\end{equation*}
	and the infinite sum 
	is explicit because the summands have the form
	\eqref{eq:rho_Gr_2_point}.
	However, this simplification 
	of correlation functions
	works only for small $N$ and small order of correlation functions.
	We will use the full two-dimensional determinantal kernel
	to obtain asymptotics of Grothendieck random partitions
	in \Cref{sec:Grothendieck_measures_definition_discussion}
	below.
\end{remark}

Plugging the correlation functions 
\eqref{eq:rho_Gr_2_point},
\eqref{eq:rho_Gr_1_point}
into the Nanson test 
\eqref{eq:Nanson_test}
(with the help of the representation 
of cluster functions via minors
\eqref{eq:T_via_principal_minors}), we find
\begin{equation}
	\label{eq:N4_concrete_for_Grothendieck}
	\mathfrak{N}_4=
	\beta ^4 x^{34} y^{30} (1-x y)^{42} (1+x y)^2 P_{14}(xy)
	+O\left(\beta ^5\right),\qquad \beta\to0,
\end{equation}
where $P_{14}(xy)$ is a certain degree $14$
polynomial in the single variable $xy$.
We see that $\mathfrak{N}_4$ vanishes at $\beta=0$, as it should be
because then the Grothendieck measure reduces to the
Schur measure which is determinantal. 
On the other hand, for $\beta\ne0$ the test does not vanish in general. For example,
at $x=y=1/2$, we have
\begin{equation*}
	\begin{split}
		&\mathfrak{N}_4=
		\frac{(\beta -4) (\beta -1) \beta ^4}{(\beta -2)^{32}}
		\ssp
		Q_3(\beta)
		Q_5(\beta)
		Q_7(\beta)
		Q_9(\beta),
		\\
		&\frac
		{(\beta-2 )^{32}}
		{(\beta-4 ) (\beta-1 ) \beta ^4}
		\ssp
		\mathfrak{N}_4\Big\vert_{x=y=1/2,\ \beta=-1}\approx
		0.00005021> 0,
	\end{split}
\end{equation*}
where $Q_3,Q_5,Q_7,Q_9$ are certain polynomials in $\beta$ of degrees $3,5,7,9$, respectively.
Since the Nanson determinantal test does not vanish for these values of $x,y,\beta$, this implies 
\Cref{thm:non_determinantal_intro}
from Introduction.

\section{Limit shape of Grothendieck random partitions}
\label{sec:limit_shape}

In this section we employ the standard steepest descent
asymptotic analysis of the correlation kernel
of the Schur process (for example, explained in 
\cite[Section 3]{Okounkov2002})
to derive the limit shape result for Grothendieck
random partitions.

\subsection{Limit shape of the Schur process}
\label{sub:Schur_limit_shape}

In this subsection we assume that all the parameters
$x_i,y_j,\beta_r$ are homogeneous and satisfy
\eqref{eq:tableau_formula_for_G_and_easy_positivity}, that is,
for all $i,j,r$ we have
\begin{equation}
	\label{eq:xyb_homogeneous_parameter}
	x_i=x,\quad y_j=y,\quad \beta_r=\beta;
	\qquad x>0,\quad
	y>0,\quad xy<1,\quad \beta<0.
\end{equation}
We require the parameters be nonzero, otherwise the measure 
degenerates and may not produce asymptotic limit shapes.

Under conditions \eqref{eq:xyb_homogeneous_parameter}, the Schur process 
$\mathscr{M}^{2d}_{\vec \beta;\mathrm{Gr}}$
\eqref{eq:Schur_process_from_Grothendieck}
is 
a
well-defined probability measure on integer
arrays $X^{2d}=\{x_i^m\colon 1\le m,i\le N\}$.
With each such array, we associate a \emph{height function}
on $\mathbb{Z}_{\ge0}\times \left\{ 1,\ldots,N  \right\}$ as follows:
\begin{equation}
	\label{eq:prelimit_height_function}
	H_N(a,t)\coloneqq \#\{j\colon x^t_j\ge a \},
	\qquad a\in \mathbb{Z}_{\ge0},\quad t=1,\ldots,N .
\end{equation}
In words, $H_N(a,t)$ is the number of particles of the configuration $X^{2d}$
at level $t$
which are to the right of $a$.
In particular, we have
\begin{equation}
	\label{eq:H_N_condition_for_x_i_i}
	H_N(x^t_t,t)=t,\qquad t=1,\ldots,N .
\end{equation}

The following limit shape result for the Schur process can
be obtained in a standard manner via the steepest descent
analysis of the correlation kernel
$K_{\vec\beta;\mathrm{Gr}}^{2d}$
\eqref{eq:Kernel_Schur}--\eqref{eq:F_t_function_for_kernel}.
We refer to 
\cite{okounkov2003correlation},
\cite{Okounkov2005},
\cite{BorFerr2008DF},
\cite{Duits2011GFF},
\cite{SaenzKnizelPetrov2018}
for similar steepest descent arguments.

\begin{theorem}[Limit shape for Schur processes]
	\label{thm:limit_shape_Schur}
	There exists a function 
	$\mathfrak{H}(\xi,\tau)$ in $\xi\in \mathbb{R}_{\ge0}$,
	$\tau\in[0,1]$,
	depending on the parameters $x,y,\beta$ \eqref{eq:xyb_homogeneous_parameter} such that in probability, 
	\begin{equation}
		\label{eq:limit_shape_Schur}
		\lim_{N\to+\infty}\frac{H_N(\lfloor \xi N \rfloor ,\lfloor \tau N \rfloor )}{N}
		=
		\mathfrak{H}(\xi,\tau).
	\end{equation}
	The function 
	$\mathfrak{H}(\xi,\tau)$ is continuous,
	piecewise differentiable,
	weakly decreases in both $\xi$ and $\tau$, 
	and its gradient $\nabla \mathfrak{H}=
	(\partial_\xi\mathfrak{H},\partial_\tau \mathfrak{H})$
	belongs to the triangle
	\begin{equation}
		\label{eq:gradient_triangle_condition}
		-1\le \partial_\xi\mathfrak{H}\le 0,
		\qquad 
		-1\le \partial_\tau\mathfrak{H}\le 0,
		\qquad 
		\partial_\tau \mathfrak{H}\ge  \partial_\xi \mathfrak{H}.
	\end{equation}
	See \Cref{fig:limit_shape_Schur} for an illustration.
\end{theorem}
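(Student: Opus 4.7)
The plan is to carry out a steepest descent analysis of the kernel $K^{2d}_{\vec\beta;\mathrm{Gr}}$ from \Cref{prop:Schur_correlation_kernel_via_integrals} in the regime $a=\lfloor \xi N\rfloor$, $t=\lfloor \tau N\rfloor$, and to read off from the limiting density of particles at the slice $t=\lfloor \tau N\rfloor$ the partial derivative $-\partial_\xi\mathfrak{H}(\xi,\tau)$. Because the Schur process is a determinantal point process, the height function $H_N(a,t)$, viewed as a linear statistic on the point configuration in the column $t$, has variance uniformly bounded by its expectation; standard concentration arguments (as in \cite{BorFerr2008DF}, \cite{Okounkov2005}) then upgrade convergence of the mean to convergence in probability. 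So the real content is to identify the limit $\lim_{N\to\infty} N^{-1} \mathbb{E} H_N(\lfloor \xi N\rfloor,\lfloor \tau N\rfloor)$.

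Under the homogeneous specialization \eqref{eq:xyb_homogeneous_parameter}, the function $F_t(z)$ in \eqref{eq:F_t_function_for_kernel} takes the form
\begin{equation*}
F_t(z)=\Bigl(\tfrac{1-z^{-1}y}{1-zx}\Bigr)^N(1-\beta z^{-1})^{t-N},
\end{equation*}
so the diagonal kernel $K^{2d}_{\vec\beta;\mathrm{Gr}}(a,t;a,t)$ becomes a double contour integral whose integrand is, up to a rational prefactor $(z-w)^{-1}(w/z)^{a-N}$, equal to $\exp\bigl(N\ssp S(z;\xi,\tau)-N\ssp S(w;\xi,\tau)\bigr)$ with the action
\begin{equation*}
S(z;\xi,\tau)\coloneqq \log(1-z^{-1}y)-\log(1-zx)+(\tau-1)\log(1-\beta z^{-1})-(\xi-1)\log z.
\end{equation*}
The critical points of $S$ in $z$ are the roots of $\partial_z S=0$, which, after clearing denominators, is a cubic equation in $z$ whose coefficients are polynomials in $(\xi,\tau,x,y,\beta)$; this is the cubic $z_c=z_c(\xi,\tau)$ mentioned in \Cref{sub:asymp_intro}. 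For $(\xi,\tau)$ in the so-called \emph{liquid region}, this cubic has a single pair of complex conjugate roots $z_c,\bar z_c$ and one real root, so the steepest descent contours can be deformed to cross at $z_c$, and the standard argument (see \cite[Section 3]{Okounkov2002}) gives
\begin{equation*}
\lim_{N\to\infty} K^{2d}_{\vec\beta;\mathrm{Gr}}\bigl(\lfloor \xi N\rfloor,\lfloor \tau N\rfloor;\lfloor \xi N\rfloor,\lfloor \tau N\rfloor\bigr)=\frac{\arg z_c(\xi,\tau)}{\pi},
\end{equation*}
which is precisely the local density $-\partial_\xi \mathfrak{H}(\xi,\tau)$. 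Outside the liquid region all three roots of the cubic are real and the density freezes at $0$ or $1$; these are the frozen facets.

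Integrating this density in $\xi$ (with the boundary value $\mathfrak{H}(\xi,\tau)\to 0$ as $\xi\to\infty$) defines $\mathfrak{H}(\xi,\tau)$. The monotonicity and gradient constraints follow from interpreting the slopes probabilistically: $-\partial_\xi \mathfrak{H}$ is the particle density at level $t=\lfloor \tau N\rfloor$, hence in $[0,1]$; $-\partial_\tau \mathfrak{H}$ is the probability that a point at $(a,t)$ is an endpoint of an edge going up-right in the path ensemble of \Cref{fig:nonintersecting_path_ensemble}, also in $[0,1]$; and the inequality $\partial_\tau \mathfrak{H}\ge \partial_\xi \mathfrak{H}$ expresses the fact that horizontal steps of the paths are forbidden so density can only decrease or stay the same from row $t$ to row $t+1$ at any given $a$. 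Piecewise differentiability and continuity are inherited from the algebraic dependence of $z_c$ on $(\xi,\tau)$ via the cubic.

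The main obstacle in this program is the bookkeeping of the steepest descent contours: one must verify that the contours from \Cref{prop:Schur_correlation_kernel_via_integrals} can be deformed through the saddle points $z_c,\bar z_c$ without crossing poles of the integrand at $0$, $y$, $x^{-1}$, $\beta$ in a way that respects the $|z|>|w|$ ordering, and one must check that the resulting residue contributions are subleading in $N$. A secondary subtlety is to control the boundary of the liquid region (the arctic curve), where the cubic degenerates and two roots collide; here the density formula is continuous and attains the values $0$ or $1$, so the gradient inequalities \eqref{eq:gradient_triangle_condition} remain valid, but a careful discussion of the cubic discriminant is required to match the domains. Apart from these analytical points, all pieces are standard.
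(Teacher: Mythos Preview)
Your proposal is correct and follows essentially the same route as the paper. The paper itself only gives an ``idea of proof'' (explicitly forgoing a detailed steepest descent argument and referring to \cite{okounkov2003correlation}, \cite{Okounkov2005}, \cite{BorFerr2008DF}, etc.): it rewrites the integrand via the same action $S(z;\xi,\tau)$ you wrote down, identifies the cubic critical point equation, and then expresses $\nabla\mathfrak{H}$ through the argument of the complex root $z_c$. One small methodological difference: where you compute the density $-\partial_\xi\mathfrak{H}=\tfrac{1}{\pi}\arg z_c$ directly from the saddle point contribution to the diagonal kernel, the paper instead invokes an \emph{a priori} harmonicity of the gradient in $z_c$ and then pins down both components of $\nabla\mathfrak{H}$ by matching boundary values against the frozen-zone trichotomy (their Proposition~\ref{proposition:trichotomy}); this yields the second formula $\partial_\tau\mathfrak{H}=\tfrac{1}{\pi}\bigl(\arg(z_c-\beta)-\arg z_c\bigr)$, which your sketch does not write out explicitly but would recover by the same mechanism.
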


\begin{figure}[htpb]
	\centering
	\includegraphics[width=.9\textwidth]{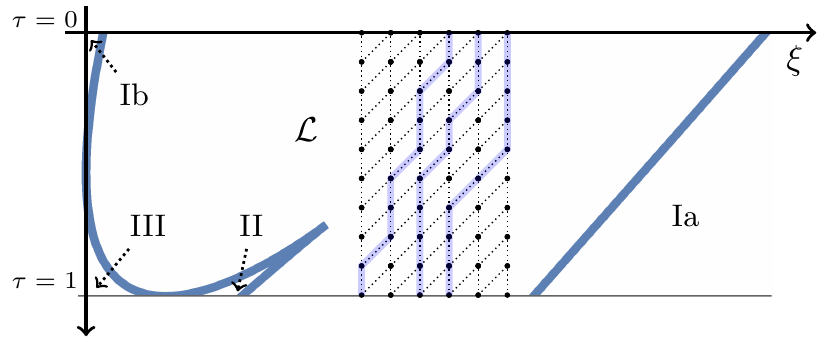}
	\caption{An example of the frozen boundary 
	curve in the $(\xi,\tau)$ coordinates, and an example of several up-diagonal
	paths as in \Cref{fig:nonintersecting_path_ensemble} serving as
	the level lines for the pre-limit height function 
	$H_N$ \eqref{eq:prelimit_height_function}.
	There are no up-diagonal paths in the frozen zones Ia-b, so $\nabla\mathfrak{H}=(0,0)$.
	In zone II, the paths go diagonally, so $\nabla\mathfrak{H}=(-1,-1)$.
	Finally, in zone III, the paths go vertically, so 
	$\nabla\mathfrak{H}=(-1,0)$. These frozen zone 
	gradients correspond to the vertices of the 
	triangle \eqref{eq:gradient_triangle_condition}.
	In this example, we have $x=1/3$, $y=1/5$, $\beta=-6$.
	For other values of parameters, zones Ib and II may be present.
	Zone III is always present, see \Cref{lemma:uniquely} below.}
	\label{fig:limit_shape_Schur}
\end{figure}

Throughout the rest of this subsection we will give an idea of 
proof of \Cref{thm:limit_shape_Schur}
together with the necessary formulas for the gradient $\nabla \mathfrak{H}$.
The integrand in the kernel $K_{\vec\beta;\mathrm{Gr}}^{2d}$ \eqref{eq:Kernel_Schur}
can be rewritten as
\begin{equation*}
	\frac{e^{N \bigl(S\bigl(z;\tfrac{a}{N},\tfrac{t}{N}\bigr) - S\bigl(w;\tfrac{b}{N},\tfrac{s}{N}\bigr)\bigr)}}{z-w},
\end{equation*}
where
\begin{equation}
	\label{eq:S_function_for_Schur}
	S(z;\xi,\tau)\coloneqq
	-(\xi-1)\log z+\log(1-z^{-1}y)-\log(1-zx)-(1-\tau)\log(1-\beta z^{-1}).
\end{equation}
The critical point equation $\frac{\partial}{\partial z}\ssp S(z;\xi,\tau)=0$
is equivalent to a cubic polynomial equation on~$z$:
\begin{equation}
	\label{eq:cubic_equation_z}
	\begin{split}
	&\xi  x z^3
	-(\xi +\beta  x (\xi +\tau -1)+(\xi +1) x y-1)z^2 \\
	&\hspace{100pt}
	+ (\beta  (\xi +\tau +\xi  x y+\tau  x y-2)+\xi  y)z-
	\beta  y (\xi +\tau -1)
	=0.
	\end{split}
\end{equation}
The region in the $(\xi,\tau)$ plane
where \eqref{eq:cubic_equation_z} has two complex conjugate nonreal roots
is called the \emph{liquid region} $\mathcal{L}$.
Inside it,
the gradient $\nabla\mathfrak{H}(\xi,\tau)$ belongs to the
interior of the 
triangle \eqref{eq:gradient_triangle_condition}.
In \Cref{fig:limit_shape_Schur}, $\mathcal{L}$ is the region inside 
the frozen boundary curve which we denote by $\partial\mathcal{L}$. 
For $(\xi,\tau)\in \mathcal{L}$,
denote by $z_c=z_c(\xi,\tau)$ the unique root of \eqref{eq:cubic_equation_z}
in the upper half complex plane. This is a critical point of the function $S$
\eqref{eq:S_function_for_Schur}.

\medskip

We forgot a standard steepest descent analysis of the kernel
$K_{\vec\beta;\mathrm{Gr}}^{2d}$ which would constitute a
detailed proof of \Cref{thm:limit_shape_Schur}. Instead, we
briefly explain how to derive explicit formulas for the
gradient $\nabla \mathfrak{H}$. We need only the \emph{a
priori} assumption (which would follow from the steepest
descent) that this gradient depends on the critical point
$z_c$ in a harmonic way when $z_c$ belongs to the upper
half-plane. When the point $(\xi,\tau)$ approaches the
boundary of the liquid region, the critical points $z_c$ and
$\bar z_c$ merge and become a real double critical point
which is a double root of the cubic equation
\eqref{eq:cubic_equation_z}.  Therefore, the frozen boundary
curve $\partial\mathcal{L}$ can be obtained in parametric
form by solving the equations
\begin{equation}
	\label{eq:double_crit_equations}
	\frac{\partial}{\partial z}\ssp S(z;\xi,\tau)=
	\frac{\partial^2}{\partial z^2}\ssp S(z;\xi,\tau)=0,
\end{equation}
in $(\xi,\tau)$, and taking $z_c=\bar z_c\in \mathbb{R}$ as a parameter. 
Equivalently, $\partial\mathcal{L}$ is the discriminant curve of the cubic equation \eqref{eq:cubic_equation_z}.
See \eqref{eq:xi_tau_equations}
below for this parametrization of the frozen boundary curve 
(we do not an explicit parametrization just yet).

We are only interested in the ``physical'' part
of the frozen boundary which lives in the
half-infinite
rectangle
$(\xi,\tau)\in[0,\infty)\times [0,1]$,
and so not all values of $z_c=\bar z_c\in \mathbb{R}$
correspond to points of the frozen boundary $\partial\mathcal{L}$.
Modulo this remark,
we get the following trichotomy of the frozen zones:

\begin{proposition}[Frozen zone trichotomy]
	\label{proposition:trichotomy}
	Depending on the 
	location of the double critical point,
	we have:
	\begin{enumerate}[$\bullet$]
		\item 
			If $\partial\mathcal{L}$ is adjacent to zones Ia or
			Ib, then $z_c=\bar z_c>0$.
		\item 
			If $\partial\mathcal{L}$ is adjacent to zone II, then $\beta<z_c=\bar z_c<0$.
		\item 
			If $\partial\mathcal{L}$ is adjacent to zone III, then $z=\bar z_c<\beta$.
	\end{enumerate}
\end{proposition}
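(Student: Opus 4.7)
The plan is to use the harmonic formulas \eqref{eq:gradient_via_angles} for the two components of $\nabla\mathfrak{H}$, which express them as $(1/\pi)$ times signed arguments of linear functions of $z_c$ dictated by the exponent $S$ in \eqref{eq:S_function_for_Schur}. Specifically, from $\partial_\xi S = -\log z$ and $\partial_\tau S = \log(1-\beta z^{-1})$, the standard steepest descent of the kernel $K^{2d}_{\vec\beta;\mathrm{Gr}}$ yields
\begin{equation*}
\partial_\xi \mathfrak{H}(\xi,\tau) = -\frac{1}{\pi}\arg z_c,
\qquad
\partial_\tau \mathfrak{H}(\xi,\tau) = \frac{1}{\pi}\bigl(\arg(z_c-\beta)-\arg z_c\bigr),
\end{equation*}
for $z_c$ in the open upper half plane, with $\arg$ chosen with values in $(0,\pi)$. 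These expressions are harmonic in $z_c$ and extend continuously to the frozen boundary $\partial\mathcal{L}$, where $z_c = \bar z_c \in \mathbb{R}$ is a real double root of the cubic \eqref{eq:cubic_equation_z}.

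On the real line, each $\arg$ above takes value $0$ or $\pi$ according to the sign of its argument. Since $\beta<0$, the real axis is split by the points $0$ and $\beta$ into three intervals, and the plan is to treat each separately. If $z_c>0$, then $z_c>0>\beta$ forces $\arg z_c = \arg(z_c-\beta) = 0$, so both components of $\nabla\mathfrak{H}$ vanish, matching the flat zones Ia and Ib (gradient vertex $(0,0)$). If $\beta<z_c<0$, then $\arg z_c = \pi$ while $z_c-\beta>0$ gives $\arg(z_c-\beta)=0$, producing $\nabla\mathfrak{H}=(-1,-1)$, the gradient vertex of zone II. If $z_c<\beta$, then $z_c<0$ and $z_c-\beta<0$, so $\arg z_c = \arg(z_c-\beta) = \pi$, giving $\nabla\mathfrak{H}=(-1,0)$, the gradient vertex of zone III. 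This exhausts the three possible positions of $z_c$ and matches them with the three vertex gradients, yielding the trichotomy.

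The hard part is justifying the continuous extension of the harmonic gradient formulas from $\mathcal{L}$ to $\partial\mathcal{L}$ and controlling the tangency points $z_c\in\{0,\beta\}$ at which adjacent frozen zones meet and the parametrization of $\partial\mathcal{L}$ by the real double root degenerates; one also needs to verify that the components of $\partial\mathcal{L}$ lying in the subintervals $(0,y),(y,1/x),(1/x,+\infty)$ of the positive ray either fall outside the physical rectangle $[0,+\infty)\times[0,1]$ via the parametrization \eqref{eq:xi_tau_equations} or consistently yield the same gradient $(0,0)$. Both are handled by the standard framework of \cite{okounkov2003correlation}, after which the trichotomy itself is a direct sign check.
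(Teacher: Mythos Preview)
Your argument is sound in outline, and the sign checks in the three intervals $(0,\infty)$, $(\beta,0)$, $(-\infty,\beta)$ are correct. However, you should be aware of a logical ordering issue relative to the paper. In the paper's presentation, the gradient identities \eqref{eq:gradient_via_angles} are explicitly stated as a \emph{consequence} of the trichotomy (``Using \Cref{proposition:trichotomy}, one can show that\ldots''), not the other way around. So citing \eqref{eq:gradient_via_angles} as input to prove the trichotomy is circular if you take the paper's logic at face value. Your way out is what you already sketched: derive the gradient formulas directly from the steepest descent of $K^{2d}_{\vec\beta;\mathrm{Gr}}$, independent of the trichotomy, via $\partial_\xi S=-\log z$ and $\partial_\tau S=\log(1-\beta z^{-1})$. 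This is indeed standard (it is the mechanism from \cite{okounkov2003correlation}, \cite{OkounkovKenyon2007Limit}), and once it is in place your argument goes through. Just make the independence from Proposition~\ref{proposition:trichotomy} explicit.

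As for the paper's own proof: there isn't one. The proposition is stated without proof, embedded in the sketch of the limit shape argument (the paper explicitly forgoes a detailed steepest descent). The surrounding discussion gives only a heuristic picture via up-diagonal paths and dual paths bounding the respective zones. So your write-up, once the circularity is resolved, actually supplies more detail than the paper does. Your closing remarks about the degenerate parameter values $z_c\in\{0,\beta\}$ and the subintervals $(0,y),(y,1/x),(1/x,\infty)$ are well placed; these are exactly the points where care is needed, and the parametrization \eqref{eq:xi_tau_equations} (derived later in the paper) is the right tool to check them.
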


Parts of 
$\partial\mathcal{L}$ 
bounding zones Ia-b
are asymptotically formed by
up-diagonal paths. In particular, 
the slope of these parts of $\partial\mathcal{L}$ 
in the $(\xi,\tau)$ coordinates cannot
exceed $1$. One can check that 
in \Cref{fig:limit_shape_Schur},
the
rightmost part of the frozen boundary $\partial\mathcal{L}$ is not linear and
has slope slightly less than 1.
On the other hand,
the boundaries of zones II and III are 
\emph{not} formed by our up-diagonal paths.
Instead, one should use suitably chosen ``dual paths''
defined through the complement
of the particle configuration $X^{2d}=\{x^m_j\}$.

\medskip

Using \Cref{proposition:trichotomy}, one can show that
inside the liquid region, we have the following expressions 
for the gradient of the limit shape in terms of the critical point $z_c(\xi,\tau)$:
\begin{equation}
	\label{eq:gradient_via_angles}
	\partial_\xi\ssp \mathfrak{H}(\xi,\tau)=
	-\frac{\mathop{\mathrm{Arg}}z_c(\xi,\tau)}{\pi}
	,\qquad 
	\partial_\tau\ssp \mathfrak{H}(\xi,\tau)=
	\frac{\mathop{\mathrm{Arg}}\bigl(z_c(\xi,\tau)-\beta\bigr)-\mathop{\mathrm{Arg}}z_c(\xi,\tau)}{\pi}
	.
\end{equation}
That is, $1+\partial_\xi \ssp \mathfrak{H}$ and $-\partial_\tau\ssp\mathfrak{H}$
are the normalized angles of the triangle in the complex plane with 
vertices $0,\beta$, and $z_c$, adjacent to $0$ and $z_c$, respectively (recall that both partial derivatives are negative). 
See \Cref{fig:triangle} for an illustration.

\begin{figure}[htpb]
	\centering
	\includegraphics[width=.4\textwidth]{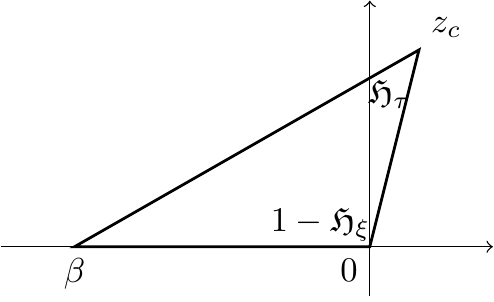}
	\caption{Triangle in the complex plane with vertices $0,\beta$, and $z_c$. 
		When $z_c$ approaches the real line at $(0,+\infty)$,
		$(\beta,0)$, and $(-\infty,\beta)$,
		we have, respectively,
		$\nabla\mathfrak{H}=(0,0)$, 
		$\nabla\mathfrak{H}=(-1,-1)$, 
		and
		$\nabla\mathfrak{H}=(-1,0)$.}
	\label{fig:triangle}
\end{figure}

\begin{remark}
	\label{rmk:complex_Burgers}
	From the cubic equation 
	$\frac{\partial}{\partial z}\ssp S(z;\xi,\tau)=0$,
	one can readily check that the complex critical point $z_c(\xi,\tau)$ satisfies
	the following version of the \emph{complex Burgers equation}:
	\begin{equation}
		\label{eq:complex_Burgers}
		\frac{\partial z_c(\xi,\tau)}{\partial \xi}=
		\left( 1-\frac{z_c(\xi,\tau)}{\beta} \right)
		\frac{\partial z_c(\xi,\tau)}{\partial \tau}.
	\end{equation}
	We refer to \cite{OkounkovKenyon2007Limit} for general details on how 
	the complex Burgers equation arises for limit shapes
	of planar dimer models.
\end{remark}

\subsection{From Schur to Grothendieck limit shapes}
\label{sub:schur_to_groth_shapes}

From the limit shape result for the Schur process
(\Cref{thm:limit_shape_Schur} above), we readily get the
limit shape of Grothendieck random partitions.  Indeed,
recall from \Cref{prop:Grothendieck_measures_embedding} that
the shifted random variables $\ell_i=\lambda_i+N-i$, $i=1,\ldots,N $, under the
Grothendieck measure $\mathscr{M}_{\vec \beta;\mathrm{Gr}}$
\eqref{eq:Grothendieck_measure_definition_complete}
are equal in distribution to the particle coordinates
$x^i_i=\mu^i_i+N-i$ corresponding to the random partitions
under the Schur process
$\mathscr{M}^{2d}_{\vec\beta;\mathrm{Gr}}$
\eqref{eq:Schur_process_from_Grothendieck}. 
The Schur process possesses a limit shape, so
when $i$ grows proportionally to
$N$, the random variables $x^i_i$ also scale proportionally to
$N$.
More precisely, \Cref{thm:limit_shape_Schur} and
the observation
\eqref{eq:H_N_condition_for_x_i_i}
imply that for all $\tau\in[0,1]$ we have
\begin{equation}
	\label{eq:x_ii_convergence}
	\frac{x^{i}_i}{N}\to
	\mathfrak{L}(\tau),\qquad i=\lfloor \tau N \rfloor , \qquad N\to+\infty,
\end{equation}
where the convergence is in probability.
Here $\mathfrak{L}(\tau)$ is a weakly decreasing function satisfying
the equation 
\begin{equation}
	\label{eq:functional_equation}
	\mathfrak{H}
	\left( 
	\mathfrak{L}(\tau),\tau
	\right)
	=\tau \quad \textnormal{for all }\tau\in[0,1],
\end{equation}
where $\mathfrak{H}(\xi,\tau)$ is the limit shape of the Schur process.
In other words, the shape $\mathfrak{L}(\tau)$ is the cross-section of the
Schur process limit shape surface $\eta=\mathfrak{H}(\xi,\tau)$
in the $(\xi,\tau,\eta)$ coordinates by the plane $\eta=\tau$.

\begin{lemma}
	\label{lemma:uniquely}
	The function $\mathfrak{L}(\tau)$, $\tau\in[0,1]$, is 
	continuous and is
	uniquely determined by equation \eqref{eq:functional_equation} and by continuity at the endpoints $\tau=0,1$. In particular, $\mathfrak{L}(1)=0$.
\end{lemma}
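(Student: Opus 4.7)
The plan is to handle existence, uniqueness, and continuity of $\mathfrak{L}$ on the open interval $(0,1)$ via the intermediate value theorem together with the gradient constraints of \Cref{thm:limit_shape_Schur}, and then extend to the endpoints $\tau \in \{0,1\}$ by one-sided limits. The main obstacle is the final step: verifying $\mathfrak{L}(1) = 0$ is the only substantive part and requires information beyond the abstract monotonicity/triangle-gradient properties of $\mathfrak{H}$.

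First, for a fixed $\tau \in (0,1)$, I would produce $\mathfrak{L}(\tau)$ as the unique solution of $\mathfrak{H}(\xi,\tau) = \tau$. For existence, the function $\xi \mapsto \mathfrak{H}(\xi,\tau)$ is continuous and weakly decreasing. Since $H_N(0,t) = N$ for every $t$, we have $\mathfrak{H}(0,\tau) = 1$, while $x^1_1/N$ converges to a finite constant in probability (by standard steepest descent on the kernel \eqref{eq:Kernel_Schur}), so $\mathfrak{H}(\xi,\tau) = 0$ for all sufficiently large $\xi$; the intermediate value theorem produces a solution. For uniqueness, suppose the level set $\{\xi : \mathfrak{H}(\xi,\tau) = \tau\}$ were a non-trivial interval $[\xi_-,\xi_+]$. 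Then $\partial_\xi \mathfrak{H} \equiv 0$ on its interior, and the triangle constraint \eqref{eq:gradient_triangle_condition} in the form $\partial_\xi \mathfrak{H} \le \partial_\tau \mathfrak{H} \le 0$ would force $\partial_\tau \mathfrak{H} \equiv 0$ there as well. The only zones where $\nabla \mathfrak{H} = (0,0)$ are the frozen facets Ia and Ib, on which $\mathfrak{H}$ is identically $1$ and $0$ respectively, which forces $\tau \in \{0,1\}$, contradicting $\tau \in (0,1)$.

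Next, I would observe that $\mathfrak{L}$ is weakly decreasing on $(0,1)$: if $\tau_1 < \tau_2$ but $\mathfrak{L}(\tau_1) < \mathfrak{L}(\tau_2)$, then the double monotonicity of $\mathfrak{H}$ gives $\tau_2 = \mathfrak{H}(\mathfrak{L}(\tau_2),\tau_2) \le \mathfrak{H}(\mathfrak{L}(\tau_1),\tau_1) = \tau_1$, a contradiction. Continuity on $(0,1)$ then follows because any subsequential limit of $\mathfrak{L}(\tau_n)$ with $\tau_n \to \tau$ solves $\mathfrak{H}(\cdot,\tau) = \tau$ by continuity of $\mathfrak{H}$ and thus equals $\mathfrak{L}(\tau)$ by uniqueness. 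Monotonicity also provides one-sided limits $\mathfrak{L}(0^+), \mathfrak{L}(1^-) \in [0,\infty)$, which define $\mathfrak{L}(0)$ and $\mathfrak{L}(1)$ by continuity. To verify $\mathfrak{L}(1) = 0$, I would use the observation from \Cref{sub:2d_Schur_process} that the bottom-row marginal $\mu^N$ of the Schur process \eqref{eq:Schur_process_from_Grothendieck} is distributed as the classical Schur measure with parameters $(x_1,\ldots,x_N; y_1,\ldots,y_N)$ (the $\beta$-specializations drop out at $t = N$). For such a Schur measure the smallest part obeys $\mu^N_N / N \to 0$ in probability, a standard consequence of the limit-shape analysis going back to \cite{okounkov2001infinite,okounkov2003correlation}. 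Hence for every $\xi > 0$ the event $x^N_N = \mu^N_N < \xi N$ occurs with probability tending to $1$, giving $H_N(\lfloor \xi N \rfloor, N) < N$ with high probability, and therefore $\mathfrak{H}(\xi,1) < 1$. Combined with $\mathfrak{H}(c,1) = 1$ (from continuity of $\mathfrak{H}$ applied to $c \coloneqq \mathfrak{L}(1^-)$), this forces $c = 0$ and completes the argument.
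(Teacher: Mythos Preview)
Your treatment of $\tau\in(0,1)$ is correct and essentially matches the paper's: both reduce uniqueness to the observation that $\partial_\xi\mathfrak{H}=0$ can only occur in the frozen zones Ia/Ib where $\mathfrak{H}\in\{0,1\}$ (you have the labels of Ia and Ib swapped relative to the paper, but this is harmless).

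There is, however, a genuine gap in your final step for $\tau=1$. From $\mu^N_N/N\to 0$ you correctly deduce that $H_N(\lfloor\xi N\rfloor,N)<N$ with high probability, but this only says $H_N/N\le 1-1/N$, and in the limit you recover merely $\mathfrak{H}(\xi,1)\le 1$, which is vacuous. To conclude the strict inequality $\mathfrak{H}(\xi,1)<1$ you need a \emph{positive fraction} of the particles $x^N_j$ to lie below $\xi N$, not just the single particle $x^N_N$. Equivalently, you need the Schur-measure limit shape to have density $1$ at $\xi=0^+$ (so that $\mu^N_{\lfloor(1-\epsilon)N\rfloor}/N\to 0$ for every small $\epsilon>0$). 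This stronger input is true, and your idea of reading it off from the $\beta$-free Schur-measure marginal at level $t=N$ is a legitimate alternative route; but you must invoke the full edge-density statement, not merely $\mu^N_N=o(N)$. The paper establishes the same fact by a direct computation: at $\tau=1$ the cubic \eqref{eq:cubic_equation_z} acquires the root $z=\beta$, and analyzing the discriminant of the remaining quadratic near $\xi=0$ shows that $(0,1)$ lies in frozen zone~III, where $\nabla\mathfrak{H}=(-1,0)$ and hence $\mathfrak{H}(\xi,1)=1-\xi<1$ for small $\xi>0$.
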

\begin{proof}
	Observe that
	$\mathfrak{H}(\xi,\tau)$
	strictly decreases in
	$\xi$ as long as $\mathfrak{H}\ne 0,1$. Indeed, 
	$\mathfrak{H}$ is strictly monotone
	in the liquid region thanks to the first of the identities
	in
	\eqref{eq:gradient_via_angles}.
	In the frozen zones, we have
	$\partial_\xi\ssp \mathfrak{H}=0$ only in zones Ia-b, 
	and there we have $\mathfrak{H}=0$ in Ia or $\mathfrak{H}=1$ in Ib.
	Indeed, 
	$\partial_{\xi} \mathfrak{H}=0$ implies $z_c(\xi, \tau) \in \mathbb{R}$,
	see \eqref{eq:gradient_via_angles}. 
	Since we only consider the ``physical'' part $(\xi,\tau)\in[0,\infty)\times [0,1]$, it follows that
	$(\xi,\tau)$ must lie on the frozen boundary.
	This implies that
	the function $\mathfrak{L}(\tau)$
	is determined by \eqref{eq:functional_equation} uniquely for $\tau\ne 0,1$, and is 
	continuous.

	For $\tau=0$, is it natural to set
	$\mathfrak{L}(0)=\max\{\xi\colon \mathfrak{H}(\xi,0)=0 \}$ by continuity.
	For $\tau=1$, it suffices to show that
	$(\mathfrak{L}(\tau),\tau)$ for $\tau$ close to $1$
	must belong to the frozen zone III and not Ib.
	For $\tau=1$, the critical point 
	equation \eqref{eq:cubic_equation_z} has a root $z=\beta$
	independently of $\xi$. 
	The discriminant of the remaining quadratic equation is negative for
	\begin{equation*}
		\frac{2}{1+\sqrt{xy}}
		<1+\xi
		<\frac{2}{1-\sqrt{xy}}.
	\end{equation*}
	In particular, the discriminant is positive for $\xi$ close to zero,
	so the point $(\xi,\tau)=(0,1)$
	lies in a frozen zone. 
	By looking at the 
	value of $z_c=\bar z_c$
	at a point of the adjacent frozen boundary, one can verify that 
	the neighborhood of $(\xi,\tau)=(0,1)$ is always in zone III.
	Thus, 
	$\mathfrak{H}(\xi,1)$
	is strictly monotone in $\xi$ in this neighborhood.
	Setting $\mathfrak{L}(1)=0$, we get the
	continuity of $\mathfrak{L}(\tau)$ at $\tau=0$, as desired.
\end{proof}

We arrive at the following
limit shape result for Grothendieck random partitions:

\begin{theorem}
	\label{thm:Groth_measures_convergence}
	Let $\lambda=(\lambda_1,\ldots,\lambda_N )$ be the
	Grothendieck random partition 
	distributed as
	$\mathscr{M}_{\vec \beta;\mathrm{Gr}}$
	\eqref{eq:Grothendieck_measure_definition_complete}
	with 
	parameters $x,y,\beta$ as in \eqref{eq:xyb_homogeneous_parameter}
	(in particular, $\beta<0$).
	For any fixed $\tau\in[0,1]$
	we have the convergence in probability:
	\begin{equation}
		\label{eq:lambda_i_limit_shape}
		\frac{\lambda_{\lfloor \tau N \rfloor }}{N}\to \mathfrak{L}(\tau)+\tau-1,\qquad N\to+\infty,
	\end{equation}
	where the function $\mathfrak{L}(\tau)$ is defined before \Cref{lemma:uniquely}.
\end{theorem}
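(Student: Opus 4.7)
My plan is to deduce the theorem from the Schur process limit shape (Theorem~\ref{thm:limit_shape_Schur}) via the distributional identity of \Cref{prop:Grothendieck_measures_embedding}. By that proposition, $(\lambda_1,\ldots,\lambda_N)$ has the same joint law as $(\mu^1_1,\ldots,\mu^N_N)$, and since $x^i_i = \mu^i_i + N - i$, the convergence stated in the theorem is equivalent to
\[
\frac{x^{\lfloor \tau N\rfloor}_{\lfloor \tau N\rfloor}}{N} \longrightarrow \mathfrak{L}(\tau),\qquad N\to+\infty,
\]
in probability, which is precisely \eqref{eq:x_ii_convergence}. Adding and subtracting $\lfloor \tau N\rfloor/N$ then recovers the additive shift $\tau-1$.

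The remaining task is therefore to justify \eqref{eq:x_ii_convergence}. I would use the identity $H_N(x^t_t,t)=t$ from \eqref{eq:H_N_condition_for_x_i_i} together with \Cref{thm:limit_shape_Schur}. Fix $\tau\in(0,1)$ and set $t=\lfloor \tau N\rfloor$. By \Cref{lemma:uniquely}, $\mathfrak{L}(\tau)$ is the unique $\xi\ge 0$ solving $\mathfrak{H}(\xi,\tau)=\tau$, and $\mathfrak{H}(\cdot,\tau)$ is strictly decreasing through this level (since $\partial_\xi\mathfrak{H}$ vanishes only where $\mathfrak{H}\in\{0,1\}$). Since $H_N(\cdot,t)$ is deterministically weakly decreasing in its first argument, a standard monotone inversion converts the pointwise convergence $N^{-1}H_N(\lfloor \xi N\rfloor,t)\to \mathfrak{H}(\xi,\tau)$ into $x^t_t/N\to \mathfrak{L}(\tau)$ in probability. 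Concretely, for any $\epsilon>0$ pick $\xi_\pm=\mathfrak{L}(\tau)\pm\epsilon$ so that $\mathfrak{H}(\xi_-,\tau)>\tau>\mathfrak{H}(\xi_+,\tau)$; with high probability $N^{-1}H_N(\lfloor \xi_- N\rfloor,t)>t/N>N^{-1}H_N(\lfloor \xi_+ N\rfloor,t)$, and combined with $H_N(x^t_t,t)=t$ and monotonicity this forces $x^t_t/N\in[\xi_-,\xi_+]$.

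The boundary cases $\tau=0$ and $\tau=1$ need separate attention but follow from the continuity of $\mathfrak{L}$ at the endpoints (already recorded in \Cref{lemma:uniquely}): for $\tau=1$ one uses that $(\mathfrak{L}(\tau),\tau)$ lies in frozen zone III for $\tau$ near $1$, forcing $\mathfrak{L}(1)=0$ and hence $\lambda_N/N\to 0$, and symmetrically for $\tau=0$. The main potential obstacle is the monotone-inversion step of the second paragraph: one has to rule out anomalous fluctuations of $x^t_t$ beyond the window $[\xi_-,\xi_+]$, but this is handled cleanly because $H_N$ is deterministically monotone in $\xi$ while the limit $\mathfrak{H}(\cdot,\tau)$ is strictly monotone at the critical level. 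Everything else is bookkeeping once \Cref{thm:limit_shape_Schur} and \Cref{lemma:uniquely} are in hand.
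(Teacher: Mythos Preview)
Your proposal is correct and follows essentially the same route as the paper: reduce via \Cref{prop:Grothendieck_measures_embedding} to the Schur process, invoke \Cref{thm:limit_shape_Schur} together with $H_N(x^t_t,t)=t$ \eqref{eq:H_N_condition_for_x_i_i}, and use \Cref{lemma:uniquely} to extract $\mathfrak{L}(\tau)$. The paper states \eqref{eq:x_ii_convergence} as an immediate consequence without spelling out the monotone-inversion argument you give in your second paragraph; your version is a more careful write-up of the same step.
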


In particular, the shift by $\tau-1$ in \eqref{eq:lambda_i_limit_shape}
comes from $\lambda_i=\ell_i+i-N$, $i=1,\ldots,N $.
When we need to indicate the dependence of $\mathfrak{L}(\tau)$ on the parameters, 
we will write
$\mathfrak{L}(\tau\mid x,y,\beta)$.

\medskip

To help visualize the Grothendieck limit shape
determined by the function $\mathfrak{L}(\tau)$,
we employ the coordinate system rotated by 
$45^\circ$ (for illustration, see \Cref{fig:YD_45_and_limsh}, left, in the Introduction).
In this way, Young diagrams and their limit shapes become functions
$\mathfrak{W}(u)$, $u\in \mathbb{R}$, satisfying
\begin{equation}
	\label{eq:continual_YD}
	|\mathfrak{W}(u)-\mathfrak{W}(v)|\le |u-v|,\qquad \mathfrak{W}(u)=|u| \quad\textnormal{for all large enough $|u|$}.
\end{equation}
Define the \emph{norm} of a continuous Young diagram by
\begin{equation}
	\label{eq:cont_YD_norm}
	\|\mathfrak{W}\|\coloneqq \frac{1}{2}\int_{-\infty}^{+\infty}\left( \mathfrak{W}(u)-|u| \right)du.
\end{equation}
Before the limit, the functions 
$\mathfrak{W}_N(u)$ corresponding to Young diagrams $\lambda$ with at most $N$ rows
are piecewise linear with derivatives $\pm1$
and integer maxima and minima. Note that $\|\mathfrak{W}_N\|=|\lambda|$ is the number of boxes in the
Young diagram.

The space of all functions satisfying 
\eqref{eq:continual_YD}
is called the \emph{space of} \emph{continuous Young diagrams} by 
Kerov, see \cite[Chapter~4]{Kerov-book}.\footnote{Our continuous Young diagrams 
are centered at zero, while Kerov considered a slightly more general framework.
This difference is not essential for us here.}
Young diagrams in the coordinate system rotated by $45^\circ$
were first considered in connection with
the Vershik--Kerov--Logan--Shepp (VKLS)
limit shape of Plancherel random partitions, see
\cite{logan_shepp1977variational},
\cite{VershikKerov_LimShape1077}.

Let the pre-limit continuous Young diagrams
$\mathfrak{W}_N(u)$ correspond to the
Grothendieck random partitions with parameters $(x,y,\beta)$.
The convergence
\eqref{eq:lambda_i_limit_shape} in
\Cref{thm:Groth_measures_convergence} implies
the pointwise convergence in probability as $N\to+\infty$ of the
rescaled functions $\frac{1}{N}\ssp\mathfrak{W}_N(uN)$ to a limit shape.
This limit shape is a
continuous Young diagram
$u\mapsto \mathfrak{W}(u)$ which has
parametric form
\begin{equation}
	\label{eq:W_curve_parametric_form}
	u=\mathfrak{L}(\tau)-1
	,\qquad 
	\mathfrak{W}=\mathfrak{L}(\tau)-1+2\tau,
	\qquad 
	\tau\in[0,1].
\end{equation}
This parametric form 
follows from
the change of coordinates from
$\left( \tau,\mathfrak{L}(\tau)+\tau-1 \right)$
to $(u,\mathfrak{W}(u))$ under the $45^\circ$ rotation.
When we need to indicate the dependence of $\mathfrak{W}(u)$ on the parameters of the Grothendieck
measure, 
we will write
$\mathfrak{W}(u\mid x,y,\beta)$.
Thus, we have established \Cref{thm:G_limsh} from the Introduction.

In \Cref{sub:simulations_and_particular_cases} below we present graphs of the limit
shapes \eqref{eq:W_curve_parametric_form}
for several choices of parameters $(x,y,\beta)$ of the Grothendieck measure.

\subsection{Properties of Grothendieck limit shapes}
\label{sub:grothendieck_limit_properties}

Here let us make several general observations 
in connection with the limit shape result for Grothendieck random partitions
(\Cref{thm:Groth_measures_convergence}).

\subsubsection{Differential equations}

Differentiating \eqref{eq:functional_equation}
in $\tau$, we see that 
$\mathfrak{L}(\tau)$ satisfies the differential equation
$\mathfrak{L}'(\tau)=
\frac{1-\partial_\tau\ssp \mathfrak{H}(\mathfrak{L}(\tau),\tau)}{\partial_\xi\ssp \mathfrak{H}(\mathfrak{L}(\tau),\tau)}$.
In terms of the critical point,
with the help of \eqref{eq:gradient_via_angles},
this equation has the form
\begin{equation}
	\label{eq:diff_eq_for_L}
	\mathfrak{L}'(\tau)
	=
	-\frac{\pi-\mathop{\mathrm{Arg}}\bigl(z_c(\mathfrak{L}(\tau),\tau)-\beta\bigr)
	+
	\mathop{\mathrm{Arg}}z_c(\mathfrak{L}(\tau),\tau)}
	{\mathop{\mathrm{Arg}}z_c(\mathfrak{L}(\tau),\tau)}.
\end{equation}
Here $z_c=z_c(\xi,\tau)$ is the root of the cubic equation 
\eqref{eq:cubic_equation_z} in the upper half plane if $(\xi,\tau)$ belongs to the liquid region.
When $(\xi,\tau)$ is in a frozen zone, 
$z_c$ should be taken real such that the arguments 
in \eqref{eq:gradient_via_angles}
give the gradient $\nabla\mathfrak{H}$ in this frozen zone. 
We refer to the 
trichotomy in \Cref{proposition:trichotomy}, see also
\Cref{fig:limit_shape_Schur} for an illustration.

The limit shape continuous Young diagram 
$\mathfrak{W}(u)$ 
\eqref{eq:W_curve_parametric_form}
in the $45^\circ$ rotated coordinate system
satisfies
a more symmetric
differential equation
\begin{equation}
	\label{eq:diff_eq_for_W}
	\mathfrak{W}'(u)=
	\frac{\pi-\mathop{\mathrm{Arg}}\bigl(z_c(u+1,\frac{\mathfrak{W}(u)-u}{2})-\beta\bigr)
	-
	\mathop{\mathrm{Arg}}z_c(u+1,\frac{\mathfrak{W}(u)-u}{2})}
	{\pi-\mathop{\mathrm{Arg}}\bigl(z_c(u+1,\frac{\mathfrak{W}(u)-u}{2})-\beta\bigr)
	+
	\mathop{\mathrm{Arg}}z_c(u+1,\frac{\mathfrak{W}(u)-u}{2})}.
\end{equation}

We remark that the root $z_c(\xi,\tau)$ of the cubic equation
\eqref{eq:cubic_equation_z} depends on $(\xi,\tau)$ in a
nonlinear and somewhat implicit manner.
Therefore, it may be challenging to extract
useful information about the
Grothendieck limit shape 
from the 
differential
equations \eqref{eq:diff_eq_for_L}--\eqref{eq:diff_eq_for_W}.
Even for producing the plots in \Cref{sub:simulations_and_particular_cases}
below we relied not on these differential equations, but rather on the 
original implicit equation 
\eqref{eq:functional_equation}.

\subsubsection{Staircase frozen facet}

Observe that when $(\xi,\tau)$ is in the frozen zone II, 
we have $\nabla\mathfrak{H}=(-1,-1)$,
which corresponds to taking
$z_c$ from $(\beta,0)$.
Thus, in this frozen zone we have from 
\eqref{eq:diff_eq_for_L} and \eqref{eq:diff_eq_for_W}:
\begin{equation}
	\label{eq:staircase_frozen_derivatives}
	\mathfrak{L}'(\tau)=-\frac{1}{2},\qquad 
	\mathfrak{W}'(u)=0.
\end{equation}

Notice that for \eqref{eq:staircase_frozen_derivatives}
to hold, the point $(\mathfrak{L}(\tau),\tau)$
of the Grothendieck limit shape
must belong to the frozen zone II. 
In fact, this is possible for certain choices of the 
parameters $(x,y,\beta)$, namely, when $\beta$ is sufficiently
large in the absolute value:
\begin{lemma}
	\label{lemma:large_negative_beta_always_zone_II}
	Let the parameters $(x,y,\beta)$ satisfy
	\eqref{eq:xyb_homogeneous_parameter}.
	For any fixed $x,y$,
	there exists $\beta_0<0$ such that for all
	all $\beta<\beta_0$, the frozen zone II
	extends through the whole horizontal strip $\xi>0$, $0<\tau<1$ in the $(\xi,\tau)$ coordinates.
	See \Cref{fig:large_beta} for an illustration.
\end{lemma}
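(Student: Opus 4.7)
The plan is to analyze the critical-point cubic \eqref{eq:cubic_equation_z} in the asymptotic regime $\beta\to-\infty$. First, separating the equation by powers of $\beta$, rewrite it as $P_0(z)+\beta\ssp P_1(z)=0$, where
\begin{equation*}
P_1(z)\coloneqq -x(\xi+\tau-1)z^2+\bigl((\xi+\tau)(1+xy)-2\bigr)z-y(\xi+\tau-1)
\end{equation*}
and $P_0$ is the $\beta$-independent remainder. A Newton-polygon / perturbation argument shows that two roots of the cubic converge to the roots of $P_1$ as $|\beta|\to\infty$, while the third root has leading asymptotics $z_3(\beta)=\beta(\xi+\tau-1)/\xi+O(1)$.

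The next key step is to show that $P_1$ has two real roots on the relevant subregion of the strip. Its discriminant factors as
\begin{equation*}
\mathrm{Disc}(P_1)=(1-xy)\bigl[(\xi+\tau)(1-\sqrt{xy})-2\bigr]\bigl[(\xi+\tau)(1+\sqrt{xy})-2\bigr],
\end{equation*}
which is positive on $\{\xi+\tau<2/(1+\sqrt{xy})\}$ because both bracketed factors are then negative (using $xy<1$). Combined with the previous step, the cubic has three real roots in this region for $|\beta|$ sufficiently large. Moreover, when additionally $1<\xi+\tau$ and $\tau<1$, the quantity $(\xi+\tau-1)/\xi$ lies in $(0,1)$, so $z_3(\beta)\in(\beta,0)$. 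By \Cref{proposition:trichotomy} and the gradient formula \eqref{eq:gradient_via_angles} continued from the adjacent liquid region, $z_c=z_3(\beta)$ is the critical point associated with zone II, so $\nabla\mathfrak{H}=(-1,-1)$ at $(\xi,\tau)$ and these points belong to zone II.

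Finally, for each $\tau_0\in(0,1)$ the set of $\xi$ with $\max(1-\tau_0,0)<\xi<2/(1+\sqrt{xy})-\tau_0$ is a nonempty open interval since $2/(1+\sqrt{xy})>1$, and its points lie in zone II for $|\beta|$ exceeding a threshold that can be chosen uniformly on compact $\tau$-subranges of $(0,1)$; exhausting $(0,1)$ by such subranges yields the desired extension of zone II through the strip. The main obstacle is the uniformity of the perturbation bounds near $\xi+\tau=2/(1+\sqrt{xy})$, where the bounded roots are almost coincident and nearly complex, and near $\tau\to 0,1$, where the diverging-root asymptotics degenerate; these are most directly handled via quantitative implicit-function estimates for the cubic, or by explicit resolution through Cardano's formula together with the compactness argument above.
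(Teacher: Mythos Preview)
Your approach---perturb the cubic \eqref{eq:cubic_equation_z} in $1/\beta$ and read off the limiting root structure from the quadratic $P_1$---is genuinely different from the paper's, which instead parametrizes the frozen boundary $\partial\mathcal{L}$ explicitly by the real double root and tracks the cusp point (triple root) as $\beta\to-\infty$, showing that the cusp escapes to infinity along the diagonal and hence that every horizontal line $\tau=\mathrm{const}\in(0,1)$ meets $\partial\mathcal{L}$ exactly four times, with zone~II between the middle two crossings. That global curve-geometric argument sidesteps the pointwise perturbation estimates entirely.

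Your argument, however, has two real gaps. First, the identification of zone~II is not justified. You show the cubic has three real roots (hence frozen) and that the diverging root $z_3$ lies in $(\beta,0)$; you then invoke \Cref{proposition:trichotomy} to conclude $\nabla\mathfrak{H}=(-1,-1)$. But the trichotomy is a statement about the \emph{double} root on $\partial\mathcal{L}$, not about single roots inside a frozen zone. In a frozen region all three roots are real, and in your regime all three in fact lie in $(\beta,0)$, so the presence of one such root carries no information. To pin down the zone you must identify which pair of roots coalesces on the adjacent piece of $\partial\mathcal{L}$ and where; this requires exactly the kind of frozen-boundary analysis the paper carries out.

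Second, the uniformity claim fails as written. You obtain a threshold $\beta_0$ on each compact $\tau$-subrange and then write ``exhausting $(0,1)$ by such subranges yields the desired extension.'' But $(0,1)$ is not compact, and nothing prevents the thresholds from diverging as $\tau\to 0,1$; the exhaustion argument does not produce a single $\beta_0$. You correctly flag the degenerations near $\tau\to 0,1$ and near $\xi+\tau=2/(1+\sqrt{xy})$ as obstacles, but the suggested fixes (implicit-function estimates, Cardano) are not carried out and are precisely where the difficulty lies. The paper's cusp-tracking argument is uniform by construction because it establishes the four-intersection property of the entire curve at once.
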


\begin{figure}[htpb]
	\centering
	\includegraphics[width=.55\textwidth]{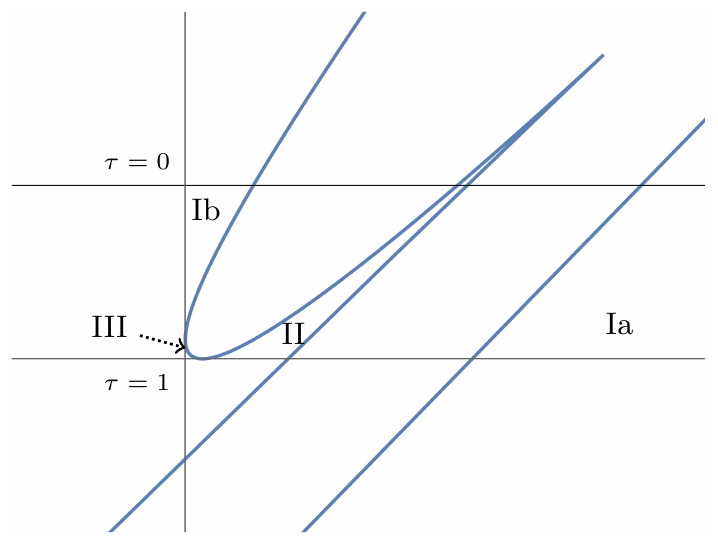}
	\caption{Frozen boundary curve in the full space $\mathbb{R}^2$
	with $x=1/3$, $y=1/5$, $\beta=-25$.
	As $\beta$ decays to $-\infty$, the cusp point goes to
	infinity along the main diagonal in the first quadrant in
	the coordinates $(\xi,1-\tau)$.}
	\label{fig:large_beta}
\end{figure}

\begin{proof}
	An explicit parametrization of the frozen
	boundary curve by $z\in \mathbb{R}$ is obtained by 
	solving the double critical point equations
	\eqref{eq:double_crit_equations}. This parametrization
	has the form
	\begin{equation}
		\label{eq:xi_tau_equations}
		\xi=
		\frac{(1-x y) \left(y \left(\beta +x z^2-2
   z\right)+z^2 (1-\beta  x)\right)}{(1-x z)^2
	 (y-z)^2},\quad 
		\tau=1+
		(z-\beta)^2\ssp
		\frac{(1-x y) \left(y-x z^2\right)}{(-\beta)  
		(1-x z)^2 (y-z)^2}.
	\end{equation}
	One can check the following facts about the frozen boundary curve $\partial\mathcal{L}$
	in the whole space $(\xi,\tau)\in \mathbb{R}^2$:
	\begin{enumerate}[$\bullet$]
		\item $\partial\mathcal{L}$ is tangent to the horizontal coordinate line 
			at a unique point $(\xi,1)$ with $\xi>0$. This point
			corresponds to $z=\beta$ which is a double zero of $\tau-1$.
			Substituting $z=\beta$ into $\xi$ produces a positive quantity.
		\item $\partial\mathcal{L}$ is 
			tangent to the vertical coordinate line at a unique point $(0,\tau)$ with $\tau<1$.
			This point corresponds to $z\to \infty$. Taking this limit in $\tau$ shows that the tangent point
			has $\tau<1$.
		\item The slope of the curve $\partial\mathcal{L}$ 
			in the coordinates $(\xi,1-\tau)$ (as in \Cref{fig:limit_shape_Schur})
			is
			\begin{equation*}
				-\frac{\partial\tau/\partial z}{\partial\xi/\partial z}=1-\frac{z}{\beta},
			\end{equation*}
			which changes sign only at $z=\beta$ and $z=\infty$.
		\item For $z=y$ and $z=1/x$, the curve $\partial \mathcal{L}$
			goes to infinity in two different asymptotic directions.
			For $z\to y$ we have $\xi,1-\tau\to-\infty$, 
			and for $z\to 1/x$ we have $\xi,1-\tau\to+\infty$.
			Each of these asymptotic directions has degree 2, that is,
			there are exactly two components of $\partial\mathcal{L}$
			escaping to infinity in each of the first and the third quadrants in the coordinates
			$(\xi,1-\tau)$.
	\end{enumerate}

	Now let us look at the ``cusp'' point of $\partial\mathcal{L}$,
	that is, where the third derivative of $S(z;\xi,\tau)$ vanishes
	along with the first two.
	In \Cref{fig:limit_shape_Schur}, the cusp is at the tip of the 
	frozen zone~II.
	Let us show that the cusp point is always unique and exists in the full space
	$(\xi,\tau)\in \mathbb{R}^{2}$.
	Take
	$\frac{\partial^3}{\partial z^3}\ssp S(z;\xi,\tau)$,
	and
	substitute into it $\xi,\tau$ as in \eqref{eq:xi_tau_equations}.
	We obtain a rational function in $z$ and the parameters 
	$(x,y,\beta)$ whose numerator 
	is a cubic polynomial
	\begin{equation*}
		\mathsf{P}(z)=
		z^3 x (1 + x y - x\beta )
		-
		3z^2xy
		+3zx y \beta
		+
		y (y -\beta - x y \beta).
	\end{equation*}
	The discriminant of $\mathsf{P}(z)$ 
	is $-27\ssp x^2 y^2 (1-x\beta)^2 (1-x y)^2 (y-\beta )^2$,
	which is manifestly negative. Therefore, 
	there is a unique real root $z$ of $\mathsf{P}(z)$, and it corresponds to the cusp point.
	
	Let us look at the behavior of the cusp point for large negative $\beta$.
	We have 
	\begin{equation*}
		\mathsf{P}(z) = \mathsf{P}_0(z)\beta+O(1),\qquad
		\beta\to-\infty,
		\quad \textnormal{where}\quad
		\mathsf{P}_0(z)\coloneqq -x^2 z^3+3 x y z-y (1+x y).
	\end{equation*}
	The polynomial $\mathsf{P}_0(z)$ has a unique real root,
	denote it by $z_0$. Clearly, the root of $\mathsf{P}(z)$
	becomes close to $z_0$ as $\beta\to-\infty$.
	Next, for $z=z_0$, we have
	\begin{equation*}
		\xi=1-\tau+O(1)=
		\frac{(1-x y) \left(y-x z_0^2\right)}{(1-x z_0)^2
		(y-z_0)^2}\ssp\beta+O(1),\qquad \beta\to-\infty.
	\end{equation*}
	Since $\mathsf{P}_0(-\sqrt{y/x})>0$ and the coefficient by $z^3$ in $\mathsf{P}_0$ is negative, we see that $y-x z_0^2>0$.  This inequality implies that the cusp point of $\partial\mathcal{L}$ goes to infinity along the main diagonal in the first quadrant in the coordinates $(\xi,1-\tau)$.

	We conclude that for large $\beta$, four components of the frozen boundary escape as $\xi,1-\tau\to+\infty$, and two components escape as $\xi,1-\tau\to-\infty$. Together with the tangence properties observed at the beginning of the proof, this implies that each horizontal line at height $\tau$, $\tau\in[0,1]$, intersects the frozen boundary precisely four times. The frozen zone II is between the middle two intersections. This completes the proof.
\end{proof}
	
By \Cref{lemma:large_negative_beta_always_zone_II}, if $|\beta|$ is sufficiently large and $\beta<0$, the limit shape of the Grothendieck random partition always has a part where the derivative satisfies \eqref{eq:staircase_frozen_derivatives}. In particular, the density of the particles $\ell_i$ is $\frac{1}{2}$. We call the part of the Grothendieck limit shape where \eqref{eq:staircase_frozen_derivatives} holds the \emph{staircase frozen facet}.

Let us discuss how the partition $\lambda$ looks in the staircase facet. For the two-dimensional Schur process, in zone II the up-diagonal paths are densely packed and move diagonally. In terms of the particle configuration $X^{2d}=\{x_i^m\colon 1\le m,i\le N\}$, this means that
\begin{equation*}
	x^{m+1}_i=x^m_i-1,\qquad x^m_{i+1}=x^m_i-1.
\end{equation*}
Thus, for the coordinates $\lambda_i$ and $\ell_i=\lambda_i+N-i$ of the Grothendieck random partition, where $\ell_i=x^i_i$ in distribution (\Cref{prop:Grothendieck_measures_embedding}), we have in the staircase facet:
\begin{equation*}
	\ell_{i+1}
	=\ell_i-2,\qquad 
	\lambda_{i+1}
	=\lambda_i-1.
\end{equation*}
Thus, in this facet, the Young diagram $\lambda$ locally looks like a staircase with no fluctuations. This justifies the name ``staircase frozen facet''. In the $45^\circ$ rotated coordinates, the limit shape 
$\mathfrak{W}(u\mid x,y,\beta)$
of $\lambda$ is horizontal in this facet. We refer to \Cref{sub:simulations_and_particular_cases} for illustrations of staircase facets.

\subsubsection{Reduction to Schur measures and Plancherel limit shapes}
\label{subsub:Plancherel_reduction}

Observe that for all $\beta\le 0$, under the Schur process $\mathscr{M}^{2d}_{\vec\beta;\mathrm{Gr}}$ \eqref{eq:Schur_process_from_Grothendieck}, the marginal distribution of the partition $\mu^N$ is simply the Schur measure with probability weights
\begin{equation}
	\label{eq:Schur_m_for_beta_zero}
	(1-xy)^{N^2}\ssp
	s_{\mu^N}(\underbrace{x,\ldots,x}_N )\ssp
	s_{\mu^{N}}(\underbrace{y,\ldots,y}_N ).
\end{equation}
When $\beta=0$, the weights $(-\beta)$ of the diagonal edges
in the directed graph in
\Cref{fig:nonintersecting_path_ensemble} vanish. Thus, for
$\beta=0$, all up-diagonal paths must go vertically, and
almost surely $\mu^m=\mu^N$ for all $m=1,\ldots, N $. This
implies that for fixed $x,y$ and as $\beta\nearrow 0$, the
limit shape $\mathfrak{W}(u\mid x,y,\beta)$ of the
Grothendieck random partition $\lambda_i=\mu^i_i$ converges 
to that of the Schur measure
\eqref{eq:Schur_m_for_beta_zero}. Denote the latter limit shape by
$\mathfrak{S}_{\tau=1}(u)$. It depends on the parameters 
$x,y$ only through their product, and is independent of $\beta$.

On the other hand, the marginal distribution of the partition $\mu^{1}$ is the following Schur measure:
\begin{equation}
	\label{eq:Schur_m_for_beta_final}
	(1-xy)^{N^2}(1-x\beta)^{-N(N-1)}
	s_{\mu^1}(\underbrace{x,\ldots,x}_N)\ssp
	s_{\mu^1}(\underbrace{y,\ldots,y}_N;\underbrace{-\hat \beta,\ldots,-\hat \beta}_{N-1} ),
\end{equation}
where $(-\hat \beta,\ldots,-\hat \beta )$ is the dual specialization (e.g.,
see \cite[Section 2]{BorodinGorinSPB12} for the definition).
Denote the limit shape of $\mu^1$ by
$\mathfrak{S}_{\tau=0}(u)$. It depends on all our parameters $x,y,\beta$.

Using, for example, the Robinson--Schensted--Knuth
correspondence \cite{Knuth1970}, \cite{fulton1997young}, one
can show that the expected numbers of boxes in the partitions 
$\mu^N$ and $\mu^1$ are, respectively,
\begin{equation*}
	\mathbb{E}|\mu^N|=\frac{N^2xy}{1-xy},\qquad 
	\mathbb{E}|\mu^1|=\frac{N^2xy}{1-xy}-N(N-1)\beta y.
\end{equation*}
Dividing by $N^2$ (which comes from rescaling both coordinate directions
of the continuous Young diagram by 
$N^{-1}$), we see that the
norms \eqref{eq:cont_YD_norm} of the limit 
shapes are
\begin{equation*}
	\|\mathfrak{S}_{\tau=1}\|=\frac{xy}{1-xy},\qquad 
	\|\mathfrak{S}_{\tau=0}\|=\frac{xy}{1-xy}-\beta y.
\end{equation*}

When $x,y$ are small,
$\|\mathfrak{S}_{\tau=1}\|$ is of order $xy$, and 
the rescaled
limit shape
\begin{equation*}
	\frac{1}{\sqrt{xy}}\ssp\mathfrak{S}_{\tau=1}(u\sqrt{xy})
\end{equation*}
converges to the celebrated Vershik--Kerov--Logan--Shepp
(VKLS) curve
\begin{equation}
	\label{eq:VKLS}
	\Omega(u)\coloneqq
	\begin{cases}
		\frac{2}{\pi} \left(u \arcsin(\frac{u}{2}) + \sqrt{4 - u^2}\right)
		,&
		|u|\le 2;\\
		|u|,&|u|>2.
	\end{cases}
\end{equation}
Note that $\|\Omega\|=1$.

By \Cref{prop:Grothendieck_measures_embedding} and
\Cref{thm:Groth_measures_convergence}, the Grothendieck
random partition $\lambda$ should have a limit shape which
is between those of $\mu^1$ and $\mu^N$. When $-\beta y\ll
xy\ll 1$, the limit shapes of $\mu^1$ and $\mu^N$ should be close
to each other and to the VKLS shape \eqref{eq:VKLS}. 
Together with numerical experimentation 
in \Cref{sub:simulations_and_particular_cases} below,
this prompts the following conjecture:

\begin{conjecture}
	\label{conj:Groth_Plancherel}
	Let $x,y\searrow 0$ and $\beta=\beta(x)\nearrow 0$ such that $-\beta(x)\ll x$. 
	Then the rescaled Grothendieck limit shape 
	$\frac{1}{\sqrt{xy}}\ssp \mathfrak{W}(u\sqrt{xy}\mid x,y,\beta(x))$ 
	converges to the VKLS shape $\Omega(u)$.
\end{conjecture}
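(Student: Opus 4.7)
The plan is to prove the conjecture by sandwiching the Grothendieck random partition between the two Schur measures that appear as the extreme marginals of the ambient Schur process \eqref{eq:Schur_process_from_Grothendieck}, and showing that both of these Schur limit shapes degenerate to the VKLS curve $\Omega(u)$ in the regime $x,y\searrow 0$, $-\beta(x)\ll x$. By \Cref{prop:Grothendieck_measures_embedding}, $\lambda_j = \mu^j_j$ in distribution. Moreover, since for each $m$, the weight $s_{(\mu^m)'/(\mu^{m+1})'}(-\beta)$ is nonzero only when $\mu^m/\mu^{m+1}$ is a vertical strip, we have $\mu^N\subseteq \mu^{N-1}\subseteq\ldots\subseteq\mu^1$ almost surely, which implies the deterministic containment $\mu^N\subseteq \lambda\subseteq \mu^1$ of Young diagrams. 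In the rotated coordinates \eqref{eq:continual_YD}, this translates to the pointwise bound
\begin{equation*}
	\mathfrak{W}_N^{(N)}(u) \;\le\; \mathfrak{W}_N(u) \;\le\; \mathfrak{W}_N^{(1)}(u),\qquad u\in\mathbb{R},
\end{equation*}
where $\mathfrak{W}_N^{(N)},\mathfrak{W}_N^{(1)}$ denote the continuous-Young-diagram encodings of $\mu^N,\mu^1$.

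The second step is to identify the $N\to\infty$ limit shapes $\mathfrak{W}^{(N)}(u\mid x,y)$ and $\mathfrak{W}^{(1)}(u\mid x,y,\beta)$ of these two Schur measures (cf.~\eqref{eq:Schur_m_for_beta_zero} and \eqref{eq:Schur_m_for_beta_final}), which may be carried out with the standard steepest descent analysis as outlined in \Cref{sub:Schur_limit_shape}. Both limit shapes exist for fixed $x,y,\beta$ satisfying \eqref{eq:xyb_homogeneous_parameter}, and the containment above passes to the limit:
\begin{equation*}
	\mathfrak{W}^{(N)}(u\mid x,y) \;\le\; \mathfrak{W}(u\mid x,y,\beta)\;\le\; \mathfrak{W}^{(1)}(u\mid x,y,\beta).
\end{equation*}
Rescaling by $\sqrt{xy}$ reduces the conjecture to the two statements
\begin{equation*}
	\frac{1}{\sqrt{xy}}\ssp\mathfrak{W}^{(N)}(u\sqrt{xy}\mid x,y) \to \Omega(u),\qquad \frac{1}{\sqrt{xy}}\ssp\mathfrak{W}^{(1)}(u\sqrt{xy}\mid x,y,\beta(x)) \to \Omega(u),
\end{equation*}
as $x,y\searrow 0$ with $-\beta(x)\ll x$.

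The third step is to verify these two degenerations. The first one is essentially the classical fact that the Schur measure with two geometric specializations (a.k.a.~the Meixner/geometric LPP ensemble) degenerates to the Plancherel measure of parameter $\theta = N^2 xy$ in the limit $xy\to 0$, whose VKLS limit shape after the $\sqrt{xy}$ rescaling is exactly $\Omega(u)$ \cite{logan_shepp1977variational}, \cite{VershikKerov_LimShape1077}. This reduction to Plancherel can be seen either directly from the double contour integral kernel \eqref{eq:Kernel_Schur}--\eqref{eq:F_t_function_for_kernel} (evaluated at $t=s=N$, for which the $\beta$-factors in $F_t$ disappear) by performing a steepest descent analysis with $x,y\to 0$, or from the moment-type computation $\mathbb{E}|\mu^N|=N^2xy/(1-xy)\sim N^2xy$ combined with concentration. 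For the second degeneration, the extra specialization $(-\hat\beta,\ldots,-\hat\beta)$ of length $N-1$ contributes to the mean $\mathbb{E}|\mu^1|$ an amount $-N(N-1)\beta y$, which when normalized by $N^2 xy$ becomes $-\beta/x$; under the hypothesis $-\beta(x)\ll x$ this perturbation is negligible, so $\mu^1$ is asymptotically also a Plancherel measure of parameter $N^2 xy(1+o(1))$ and converges to the same rescaled VKLS shape.

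The main obstacle is the interchange of the two limits (first $N\to\infty$ and then $x,y\to 0$): the results of \Cref{sub:Schur_limit_shape} give a limit shape for each fixed triple $(x,y,\beta)$, but one must quantitatively control the uniformity of this convergence as the parameters vary. A clean way to bypass this difficulty is to argue only at the level of the norms $\|\mathfrak{W}^{(N)}\|$ and $\|\mathfrak{W}^{(1)}\|$ (whose values are explicit and differ by $|\beta|y = o(xy)$) together with the sandwich, which forces $\|\mathfrak{W}^{(1)} - \mathfrak{W}^{(N)}\|_{L^1}\to 0$ in the rescaled variable; since both shapes are 1-Lipschitz the $L^1$-closeness upgrades to uniform closeness on compact sets, and the classical VKLS theorem applied to the Schur (Meixner) ensemble identifies the common limit as $\Omega$. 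The subtler variant, proving convergence jointly (i.e.~letting $x,y$ depend on $N$), would require a uniform two-parameter steepest descent analysis of the kernel \eqref{eq:Kernel_Schur}, which is outside the scope of this conjecture as stated.
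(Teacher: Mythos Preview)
The statement is a \emph{Conjecture} in the paper; there is no proof given. The authors only offer heuristic motivation (the norms $\|\mathfrak{S}_{\tau=1}\|=\frac{xy}{1-xy}$ and $\|\mathfrak{S}_{\tau=0}\|=\frac{xy}{1-xy}-\beta y$, and the observation that ``the Grothendieck random partition $\lambda$ should have a limit shape which is between those of $\mu^1$ and $\mu^N$'') together with numerical evidence.

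Your proposal goes further than the paper and is, in fact, essentially a valid proof of the conjecture. The key step the paper leaves imprecise and you make explicit is the \emph{deterministic} sandwich $\mu^N\subseteq\lambda\subseteq\mu^1$: since $\lambda_j=\mu^j_j$ and $\mu^N\subseteq\mu^j\subseteq\mu^1$, one has $\mu^N_j\le\lambda_j\le\mu^1_j$ for every $j$, hence $\mathfrak{S}_{\tau=1}\le\mathfrak{W}\le\mathfrak{S}_{\tau=0}$ pointwise after $N\to\infty$. After rescaling by $\sqrt{xy}$ the nonnegative gap $\mathfrak{S}_{\tau=0}-\mathfrak{S}_{\tau=1}$ has $L^1$-norm equal to $-\beta/x\to 0$, and the $1$-Lipschitz property upgrades this to uniform smallness. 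Since the paper already asserts that the rescaled $\mathfrak{S}_{\tau=1}$ converges to $\Omega$, the squeeze gives the conclusion.

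One comment: your final paragraph about ``interchange of the two limits'' is a distraction. The conjecture, as stated, concerns the \emph{iterated} limit---first $N\to\infty$ for fixed $(x,y,\beta)$ to obtain $\mathfrak{W}(\cdot\mid x,y,\beta)$, then $x,y,\beta\to 0$---and your sandwich argument addresses precisely this iterated limit without any uniformity in $N$ required. The only external input you need is the Meixner$\to$VKLS degeneration of $\mathfrak{S}_{\tau=1}$, which the paper treats as known; granting that, the argument is complete and there is no remaining obstacle.
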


Consider another regime when $x,y\searrow 0$ but $\beta=\beta(y) \to-\infty$ such that $-\beta y$ is fixed. 
This change in $\beta$ does not affect $\mu^N$, and the norm of $\mathfrak{S}_{\tau=1}$
goes to zero. After rescaling, $\mathfrak{S}_{\tau=1}$ is close to
VKLS shape.
The limit shape of $\mu^1$ at $\tau=0$, on the other hand,
grows macroscopically, but one can show that 
$\mathfrak{S}_{\tau=0}$ still contains a Plancherel-like part 
at scale $xy$ in the neighborhood of $u=1$.
In the two-dimensional
Schur process picture, for $\beta\to-\infty$
the up-diagonal paths strongly prefer to go diagonally. 
Therefore, we expect that in this regime, the Grothendieck limit shape 
contains a 
(shifted)
Plancherel-like part. However, 
numerical experimentation 
\Cref{sub:simulations_and_particular_cases} below
suggests that this shape is not exactly the 
VKLS shape. Let us formulate a conjecture:

\begin{conjecture}
	\label{conj:Groth_Plancherel_II}
	Let $x,y\searrow 0$ and $\beta=\beta(y)=-K/y$, where $K>0$ is fixed. 
	There exists $K_0>0$ such that for all $K>K_0$, in the
	$O(\sqrt{xy})$-neighborhood of $u=1$, the Grothendieck limit shape 
	$\mathfrak{W}(u\mid x,y,\beta(y))$
	is close to
	\begin{equation}
		\label{eq:Omega_for_shPl}
		\frac{u+1}{2}+\frac{\sqrt{xy}}{2}\ssp \Omega_{(K)}\left( \frac{u-1}{\sqrt{xy}} \right),
	\end{equation}
	where
	$\Omega_{(K)}$ is a suitable $K$-dependent deformation of the VKLS shape $\Omega$. 
	We expect that as $K\to -\infty$, the shapes $\Omega_{(K)}$ approach $\Omega$.
\end{conjecture}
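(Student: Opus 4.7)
The plan is to prove the conjecture by asymptotic analysis of the Schur process \eqref{eq:Schur_process_from_Grothendieck} in the regime $x,y\searrow 0$ with $\beta y=-K$, and then to transfer the result to the Grothendieck shape through the cross-section identity $\mathfrak{H}(\mathfrak{L}(\tau),\tau)=\tau$ and the rotated parametrization \eqref{eq:W_curve_parametric_form}. First I would show that for $K$ above an explicit threshold $K_0$, the staircase zone~II of $\mathfrak{H}$ reaches the cross-section curve all the way up to its top end $\tau=0$, by an argument of the same flavor as \Cref{lemma:large_negative_beta_always_zone_II}. This forces $\mathfrak{L}(0)=2$, so that $\lambda_1/N\to 1$ and $u=1$ becomes the corner where the staircase facet $\mathfrak{W}=1$ meets the straight edge $\mathfrak{W}=u$; the conjectured local profile is then the finite-$N$ rounding of that corner at Plancherel scale.

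Second, I would perform the Plancherel-scale zoom. Setting $\xi=2+\tilde\xi\sqrt{xy}$, $\tau=\tilde\tau\sqrt{xy}$, substituting $\beta=-K/y$ into \eqref{eq:cubic_equation_z}, and using $|z|\ll|\beta|$ to expand the $\beta$-contribution in $\partial_z S$ from \eqref{eq:S_function_for_Schur} as $(1-\tau)/z+O(y/K)$, the cubic collapses to a quadratic of the form $xz^2-(2-\xi-\tau)z+y\approx 0$ plus a $K$-dependent subleading correction. This quadratic has complex conjugate roots precisely when $|2-\xi-\tau|<2\sqrt{xy}$, so the liquid region shrinks to a strip of width $O(\sqrt{xy})$ around the line $\xi+\tau=2$, and $z_c$ traces the upper semicircle of radius $\sqrt{y/x}$. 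Plugging into \eqref{eq:gradient_via_angles} recovers the same trigonometric integrals that produce the VKLS shape $\Omega$ for an ordinary Plancherel/Schur measure, plus a $K$-dependent correction at the next order. Translating via \eqref{eq:W_curve_parametric_form} then yields the conjectured local form, with $\Omega_{(K)}$ defined implicitly from the modified quadratic and satisfying $\Omega_{(K)}\to\Omega$ as $K\to+\infty$ (when the correction vanishes).

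To upgrade from this heuristic critical-point picture to convergence in probability, I would run a uniform steepest-descent analysis of the correlation kernel \eqref{eq:Kernel_Schur}, following \cite{okounkov2003correlation} and \cite[Section~3]{Okounkov2002}. Density and gap probabilities of $\{\lambda_i\}$ in the $\sqrt{xy}$-window around $u=1$ are then controlled via \Cref{prop:correlations_tilted} by finite-dimensional Fredholm-like determinants built from the Eynard--Mehta kernel $K_{\vec\beta;\mathrm{Gr}}^{2d}$, which should converge to Fredholm determinants of an Airy/sine-type kernel attached to $\Omega_{(K)}$.

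The hardest step will be pinning down $K_0$ and producing an explicit implicit equation for $\Omega_{(K)}$. Since the $O(y/K)$ correction acts on the same Plancherel scale as the leading quadratic, $\Omega_{(K)}$ is genuinely a new curve rather than a rescaled $\Omega$, and its continuous-Young-diagram features (1-Lipschitz, $\Omega_{(K)}(v)\to|v|$ as $|v|\to\infty$) must be verified directly from the modified quadratic. A secondary technical issue is maintaining uniformity of the steepest-descent bounds as $x,y\to 0$ while $\beta\to-\infty$ along $\beta y=-K$, since the contours in \eqref{eq:Kernel_Schur} must be deformed consistently around poles at $x^{-1},y,\beta$ which separate at different rates in this double scaling.
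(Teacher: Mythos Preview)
The statement you are attempting to prove is labeled as a \emph{Conjecture} in the paper and is not proven there. The paper offers no argument beyond the heuristic discussion in \Cref{subsub:Plancherel_reduction} and the numerical evidence in \Cref{fig:sim5}; in fact the authors explicitly write that ``numerical experimentation \ldots\ suggests that this shape is not exactly the VKLS shape'' and then \emph{formulate} the conjecture on that basis. So there is no paper proof to compare your proposal against.

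As for the proposal itself, it is a plausible outline but remains a sketch with substantial gaps you already flag. A few specific concerns: your claim that zone~II forces $\mathfrak{L}(0)=2$ (equivalently $\lambda_1/N\to 1$) needs justification in this double-scaling regime, since $\mu^1$ has a genuinely macroscopic shape of norm $\approx K$ while $\mu^N$ shrinks to zero, and it is not obvious a priori where the cross-section lands at $\tau=0$. The reduction of the cubic \eqref{eq:cubic_equation_z} to a Plancherel-type quadratic via $|z|\ll|\beta|$ is exactly the kind of expansion one wants, but the ``$K$-dependent subleading correction'' is doing all the work in defining $\Omega_{(K)}$, and you have not shown it survives the limit as a nontrivial deformation rather than vanishing or blowing up. Finally, the convergence $\Omega_{(K)}\to\Omega$ is stated for $K\to+\infty$ in your text but $K\to-\infty$ in the conjecture as printed; the former is what makes sense given $K>0$, and this is likely a typo in the paper, but it should be clarified. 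In short: your strategy is reasonable as a research plan for an open problem, not a proof, and the paper does not claim otherwise.
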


We have formulated Conjectures \ref{conj:Groth_Plancherel} and \ref{conj:Groth_Plancherel_II} only for limit shapes, but similar Plancherel-like behavior should arise for Grothendieck random partitions themselves.

\subsection{Grothendieck limit shape plots}
\label{sub:simulations_and_particular_cases}

The limit shape surface $\mathfrak{H}(\xi,\tau)$ of the two-dimensional Schur process has the normal vector $\nabla\mathfrak{H}(\xi,\tau)$. This gradient is expressed through the solution $z_c(\xi,\tau)$ to the cubic equation \eqref{eq:cubic_equation_z}, see \eqref{eq:gradient_via_angles}.
However, $\mathfrak{H}(\xi,\tau)$ itself is not explicit, making it necessary to employ numerical integration to graph the surface. This is achieved by integrating the gradient along the $\xi$ direction, starting from $+\infty$ and moving towards the point $(\xi,\tau)$.

Recall that the Grothendieck limit shape $\mathfrak{L}(\tau)$ is the cross-section of the Schur process limit shape surface $\eta=\mathfrak{H}(\xi,\tau)$ in the $(\xi,\tau,\eta)$ coordinates, at the plane $\eta=\tau$. This cross-section is not explicit either. Therefore, we need to solve numerically the implicit equation \eqref{eq:functional_equation} to get the desired function $\mathfrak{L}(\tau)$. After obtaining $\mathfrak{L}(\tau)$, we use it to graph the shape $\mathfrak{W}(u)$ in the coordinate system rotated by $45^\circ$ using the parametric representation \eqref{eq:W_curve_parametric_form}. 

We remark that the differential equations \eqref{eq:diff_eq_for_L} and \eqref{eq:diff_eq_for_W} for $\mathfrak{L}(\tau)$ or $\mathfrak{W}(u)$, respectively, are not very useful for graphing directly, as they cannot be solved explicitly. While a numerical solution of these differential equations is possible, it would require specific convergence estimates, which we avoid with our more direct approach.

\medskip

We implement a cubic equation solver in Python to find the roots $z_c(\xi,\tau)$ along a regular grid of $(\xi,\tau)$, utilizing the code from \cite{shril_halder_CubicEquationSolver}. Then (also with Python) we perform direct numerical integration of the $\xi$-gradient of the height function \eqref{eq:gradient_via_angles}. After that we solve the implicit equation \eqref{eq:functional_equation}. This procedure yields the values of $\mathfrak{W}(u)$ along a non-regular grid in $u$, which is sufficient for graphing the limit shape of Grothendieck random partitions. Our Python code is available at \cite{gavrilova_petrov_2023_code}.

\begin{remark}
	Here we do not perform probabilistic simulations of Grothendieck random partitions, but rather focus on numerically graphing the Grothendieck limit shapes which exist due to \Cref{thm:Groth_measures_convergence}.
\end{remark}

Next we descibe the resulting plots of Grothendieck limit shapes. The images are located at the end of the paper.

\subsubsection{Basic example ($x=1/3,\,y=1/5,\,\beta=-6$)}

First, we take the same parameters as in \Cref{fig:limit_shape_Schur}. 
In \Cref{fig:sim1}, the
left pane displays the limit shape surface $\mathfrak{H}$ for the Schur
process (red), the plane $\eta=\tau$ (blue), and the curve
$(\mathfrak{L}(\tau),\tau,\tau)$ in the cross-section. The
top right pane shows the projection of the cross-section
onto the bottom coordinate plane $(\xi,1-\tau)$, and also includes the frozen boundary curve. 
The frozen boundary is the same as in
\Cref{fig:limit_shape_Schur}. The bottom right pane presents
the limit shape of Grothendieck random partitions in the
coordinates $(u,\mathfrak{W}(u))$. The limit shape
$\mathfrak{W}(u)$ always lies below the line $u+2$, as the
number of nonzero parts in the Grothendieck random partition
is limited to at most $N$.

\subsubsection{Large negative beta ($x=1/3,\,y=1/5,\,\beta=-25$)}

Second, we consider the case of large negative $\beta$.
In the top left pane in \Cref{fig:sim2}, we have zoomed in around the flat
section of the surface $\mathfrak{H}$ (red).
The blue
plane corresponds to $\eta=\tau$, and the black meshed
plane extends the zone II frozen facet of $\mathfrak{H}$ which has
$\nabla\mathfrak{H}=(-1,-1)$. 
We see that the intersection of the blue plane with the red surface is a straight line in this neighborhood.
The bottom left pane displays the
projection of the cross-section, similar to
\Cref{fig:sim1}. By
\Cref{lemma:large_negative_beta_always_zone_II}, the
black curve must traverse through zone II. On the right
pane, we added a horizontal line to highlight the staircase frozen facet where the
limit shape $\mathfrak{W}(u)$ is horizontal.

\subsubsection{Plancherel-like behavior for small negative beta}

In \Cref{fig:sim3,fig:sim4}
we numerically support \Cref{conj:Groth_Plancherel}
that the rescaled Grothendieck limit shape converges to the 
VKLS shape $\Omega(u)$ \eqref{eq:VKLS} as $x,y\to0$ such that $\beta\ll x$.
In \Cref{fig:sim3} the parameters $x=y=1/40$ are fixed. 
As $\beta$ gets close to zero (we chose three orders, $(xy)^{\frac{1}{2}}, (xy)^{\frac{3}{4}}$, and $xy$),
we see that the plots of $\mathfrak{W}(u)$ get closer to the VKLS shape.
Moreover, \Cref{fig:sim4} demonstrates that for smaller $x=y=1/100$, taking $\beta=1/1000$ (order $(xy)^{\frac{3}{4}}$),
makes the shape $\mathfrak{W}(u)$ closer than for $x=y=1/40$.
Indeed, we have for the uniform norms:
\begin{equation*}
\begin{split}
	(xy)^{-\frac12}\cdot\bigl\|\mathfrak{W}(\cdot \mid x,y,\beta) - \sqrt{xy}\,\Omega(\cdot /\sqrt{xy})\bigr\|_C
	\bigg\vert_{x=y=1/40,\,\beta=1/250}
	&\approx 0.10;
	\\
	(xy)^{-\frac12}\cdot\bigl\|\mathfrak{W}(\cdot \mid x,y,\beta) - \sqrt{xy}\,\Omega(\cdot /\sqrt{xy})\bigr\|_C 
	\bigg\vert_{x=y=1/100,\,\beta=1/1000}
	&\approx 0.06;
	\\
	(xy)^{-\frac12}\cdot\bigl\|\mathfrak{W}(\cdot \mid x,y,\beta) - \sqrt{xy}\,\Omega(\cdot /\sqrt{xy})\bigr\|_C 
	\bigg\vert_{x=y=1/900,\,\beta=1/27000}
	&\approx 
	0.044
	,
\end{split}
\end{equation*}
which suggests that these expressions should
decay to zero.

\subsubsection{Plancherel-like behavior for large negative beta}
\label{subsub:positive_beta}

In \Cref{fig:sim5}, we consider the regime of \Cref{conj:Groth_Plancherel_II},
and take $x=y=1/40$, $\beta=-120$, so $-\beta y=3$.
The Grothendieck limit shape $\mathfrak{W}(u)$ has a staircase
frozen facet, and to the right of it we 
observe a curved part of size $O(\sqrt{xy})$. Zooming in,
we see that this part of $\mathfrak{W}(u)$ does not seem to be close to the shifted and scaled VKLS shape,
see \Cref{fig:sim5}, right.

\subsubsection{Positive beta ($x=1/3,\,y=1/5,\,\beta=1/12$)}
By \Cref{prop:Grothendieck_extended_positivity}, 
the Grothendieck measure $\mathscr{M}_{\vec\beta;\mathrm{Gr}}(\lambda)$
on partitions 
is also nonnegative for 
$0\le \beta<\min (x^{-1},y)$. 
Setting $\beta>0$ 
makes the corresponding two-dimensional Schur process 
\eqref{eq:Schur_process_from_Grothendieck}
a signed probability measure.
However, in this case we can still define the 
surface $\mathfrak{H}(\xi,\tau)$ 
using the root $z_c(\xi,\tau)$ of the cubic equation
\eqref{eq:cubic_equation_z}.
Then we can define the curve $\mathfrak{L}(\tau)$
as the solution of the implicit equation
\eqref{eq:functional_equation}, and 
finally obtain a shape $\mathfrak{W}(u)$
via \eqref{eq:W_curve_parametric_form}.
This leads to the following conjecture:

\begin{conjecture}
	\label{conj:positive_beta}
	The curve $\mathfrak{W}(u)=\mathfrak{W}(u\mid x,y,\beta)$
	is well-defined by the procedure described above for all
	$\beta<\min(x^{-1},y)$. Moreover, this curve $\mathfrak{W}(u)$
	is the limit shape of the random partitions 
	distributed according to the Grothendieck measure with homogeneous parameters
	\eqref{eq:xyb_homogeneous_parameter} in the $45^\circ$ rotated coordinate system.
\end{conjecture}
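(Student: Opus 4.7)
My plan is to split the conjecture into two parts, the well-definedness of the curve $\mathfrak{W}(u\mid x,y,\beta)$ and the probabilistic identification as a limit shape. The first part should be a routine extension of the analysis in Section 5.1. The cubic equation \eqref{eq:cubic_equation_z} and the gradient formula \eqref{eq:gradient_via_angles} are rational in all parameters including $\beta$, so one checks that for $\beta\in (0,\min(x^{-1},y))$ the cubic still has a non-empty liquid region with a unique root $z_c$ in the upper half-plane, and that the angles in \eqref{eq:gradient_via_angles} still define a consistent surface $\mathfrak{H}(\xi,\tau)$. The strict monotonicity argument of \Cref{lemma:uniquely} extends verbatim, so the implicit equation \eqref{eq:functional_equation} still has a unique continuous solution $\mathfrak{L}(\tau)$, and \eqref{eq:W_curve_parametric_form} produces a continuous Young diagram $\mathfrak{W}$.

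For the second part, the key observation is that the algebraic identities underlying our framework remain valid for any $\beta$, even when the two-dimensional process $X^{2d}$ is not a genuine probability measure. Specifically, the marginal identity \eqref{eq:X2d_marginal} in \Cref{thm:properties_of_W_2d} is a purely algebraic relation, and the polynomial-in-kernel formulas of \Cref{prop:correlations_tilted} are algebraic identities as well. Thus, for $\beta\in(0,\min(x^{-1},y))$, the (genuinely nonnegative) Grothendieck probabilities $\mathscr{M}_{\vec\beta;\mathrm{Gr}}(X_{\mathcal{I}}=a_{\mathcal{I}})$ can still be expressed through the explicit double contour integral kernel $K^{2d}_{\vec\beta;\mathrm{Gr}}$ from \Cref{prop:Schur_correlation_kernel_via_integrals}, provided the contours are deformed to pick up the same residues. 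I would then perform the standard steepest descent on \eqref{eq:Kernel_Schur}, which works uniformly in $\beta$ away from the codimension-one loci where the contour topology changes, since the saddle point equation is the same cubic \eqref{eq:cubic_equation_z}. Applying this asymptotics to the $k=1$ formula in \Cref{prop:correlations_tilted} yields convergence in expectation of the Grothendieck height function to $\mathfrak{H}(\mathfrak{L}(\tau),\tau)$, which equals $\tau$ along the desired cross-section.

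The hard part will be concentration. In the $\beta<0$ regime, convergence in probability comes for free from the determinantal variance bound for $X^{2d}$; here, with $X^{2d}$ signed, we must control the variance of the Grothendieck marginal $\#\{i\colon \ell_i\ge \lfloor \xi N\rfloor\}$ directly from the $k=2$ case of \Cref{prop:correlations_tilted}. This two-point formula expands the covariance as a polynomial in entries of $K^{2d}_{\vec\beta;\mathrm{Gr}}$, and the challenge is that signed cancellations must conspire to yield a variance of order $o(N^2)$. I see two plausible routes. The first is to show that the variance, computed as an explicit multiple contour integral, decays by the same steepest descent mechanism as in the determinantal case: the double saddle structure forces cancellation of leading-order contributions, and the polynomial combination in \Cref{prop:correlations_tilted} should inherit this decay term by term. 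The second route, possibly simpler, is analytic continuation in $\beta$: for fixed $(\xi,\tau)$ the variance is a rational function of $\beta$ with poles only on the boundary of the allowed parameter region, and known uniform estimates for $\beta<0$ can potentially be propagated by a Vitali-type argument. I expect the first approach to be more robust but technically heavier, while the second is elegant but depends on uniformity that must be carefully extracted from the contour representation.
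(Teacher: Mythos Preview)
The statement is a \emph{conjecture}: the paper does not prove it, only supports it numerically (\Cref{fig:sim6}). So there is no proof to compare against, and your proposal should be read as an attempted resolution of an open problem.

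Your plan for the first part (well-definedness of $\mathfrak{W}$) is plausible, though not quite ``verbatim'' from \Cref{lemma:uniquely}: for $\beta>0$ the location of the pole at $z=\beta$ relative to $0$ and $y$ changes, so the trichotomy of \Cref{proposition:trichotomy} and the tangency analysis at $\tau=1$ need to be redone. The paper itself flags (\Cref{fig:sim6}, bottom left) that the curve $\mathfrak{L}(\tau)$ becomes tangent to the frozen boundary in this regime, so the strict monotonicity argument may degenerate exactly there.

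The second part has a genuine gap. You propose to feed the steepest descent asymptotics of $K^{2d}_{\vec\beta;\mathrm{Gr}}$ into the formulas of \Cref{prop:correlations_tilted} and read off expectation and variance. But those formulas are Fredholm determinants of matrices whose \emph{size grows linearly in $N$}: the block indexed by $B_{i_p}(a_{i_p})$ has dimension $a_{i_p}+1\sim \xi N$. So the ``polynomial in kernel entries'' has degree of order $N$, not bounded degree, and term-by-term saddle-point control of the entries does not give control of the determinant. The paper's abstract says explicitly that these Fredholm formulas are ``not immediately suitable for asymptotic analysis,'' and the actual proof of \Cref{thm:Groth_measures_convergence} for $\beta<0$ bypasses them entirely, relying instead on the probabilistic determinantal structure of the nonnegative Schur process (which gives variance $O(N\log N)$ for linear statistics by standard arguments). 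That tool is precisely what disappears for $\beta>0$.

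Your analytic-continuation route is also not straightforward: for each fixed $N$ the variance is rational in $\beta$, but the limit shape statement requires uniformity in $N$ before you continue. A Vitali argument would need a uniform bound on $\mathrm{Var}(H_N)/N^2$ on an open complex neighborhood of $[\beta_-,\beta_+]\ni 0$, and establishing that bound for complex $\beta$ is essentially the same difficulty as the original problem. Without a new idea for controlling the growing-size Fredholm determinants or a genuinely different observable, the concentration step remains open.
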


In \Cref{fig:sim6}, we numerically support \Cref{conj:positive_beta}
by considering parameters 
$x=1/3$, $y=1/5$, $\beta=1/12$.
We plot the surface $\mathfrak{H}(\xi,\tau)$ in 
\Cref{fig:sim6}, top left. In the bottom left pane
we plot
the curve $\mathfrak{L}(\tau)$ together with the
``frozen boundary''. An interesting feature is that
here $\mathfrak{L}(\tau)$ is tangent to this
``frozen boundary''. 
Finally, in \Cref{fig:sim6}, right, 
we plot the conjectural limit shape $\mathfrak{W}(u\mid\frac{1}{3},\frac{1}{5},\frac{1}{12})$.
From additional numerical examples we also noticed that as $\beta\nearrow \min(x^{-1},y)$,
we have 
$\|\mathfrak{W}(\cdot\mid x,y,\beta)\|\to0$.

\subsection{Exact sampling Grothendieck random partitions by Schur dynamics}
\label{sub:sampling}

It is known that Schur processes can be exactly sampled
using push-block type dynamics or Robinson--Schensted--Knuth
(RSK) correspondences. We refer to \cite{BorFerr2008DF},
\cite{Borodin2010Schur}, \cite{BorodinGorinSPB12},
\cite[Section~4]{MatveevPetrov2014}, or
\cite{Betea_etal2014} for various expositions of general
sampling mechanisms for Schur processes. An application to
our Schur process $\mathscr{M}^{2d}_{\vec\beta;\mathrm{Gr}}$
\eqref{eq:Schur_process_from_Grothendieck} is implemented in
Python \cite[file
\texttt{RSK\_code.py}]{gavrilova_petrov_2023_code} and
follows the RSK dynamics on interlacing arrays as in
\cite{MatveevPetrov2014}. We only work with homogeneous parameters, but a straightforward modification would cover 
the fully inhomogeneous case.
The results of the simulation are given in \Cref{fig:samples}.

Let us briefly describe our sampling mechanism in terms of semistandard Young tableaux (which are in a well-known bijection 
with interlacing arrays). Start from an empty Young tableau $T(0)=\varnothing$.

In the first stage, insert into this tableau
an $N\times N$ matrix $A$ of independent geometric random variables with distribution $\mathrm{Prob}(\xi=k)=
(1-xy)\cdot(xy)^{k}$, $k=0,1,2,\ldots $ using the classical
RSK correspondence \cite{Knuth1970}.
This procedure is performed in $N$ steps, and in each $t$-th
step we form the word $1^{A_{t1}}\ldots N^{A_{tN}}$, where $A_{ij}\in \mathbb{Z}_{\ge0}$ are the elements
of the matrix $A$, and powers of letters mean repetition. This word is then inserted into the Young tableau
$T(t-1)$
using the usual RSK insertion.
After these $N$ steps, the shape of our Young
tableau $T(N)$ is distributed according to the Schur measure
\eqref{eq:Schur_m_for_beta_zero} with specializations
$(x,x,\ldots,x)$ and $(y,y,\ldots,y )$. 

In the second stage, take an $(N-1)\times N$ matrix of independent Bernoulli random variables with distribution 
$\mathrm{Prob}(\eta=1)=-\beta x/(1-\beta x)$, and insert it into the tableau $T(N)$ using 
the dual RSK correspondence. This procedure is performed in $N-1$ steps, and in each $s$-th step we form the word 
$1^{B_{s1}}\ldots N^{B_{sN}}$ (where $B_{ij}\in\left\{ 0,1 \right\}$ are the elements of $B$), and insert it into the Young tableau 
$T(N+s-1)$ using the dual RSK insertion. An implementation
of the dual RSK insertion for semistandard Young tableaux
(equivalently, interlacing arrays of integers) that we used is the algorithm $\mathscr{Q}_{\mathrm{row }}^{q=0}[-\hat{\beta}]$ from
\cite[Section~4.3]{MatveevPetrov2014}. 
After these $N-1$ steps, the shape of our Young
tableau $T(2N-1)$ is distributed according to the Schur measure
\eqref{eq:Schur_m_for_beta_final} with specializations
$(x,x,\ldots,x)$ and $(y,y,\ldots,y;-\hat \beta,\ldots,-\hat \beta)$.

To obtain the Grothendieck random partition, one has to track different parts of the shape of the 
evolving Young tableau $T(N+s-1)$. Namely, set 
\begin{equation}
	\label{eq:lambda_Groth_from_T}
	\lambda_{N-s+1}=T(N+s-1)_{N-s+1},\qquad s=1,2,\ldots,N ,
\end{equation}
where $T(N+s-1)_j$ means the $j$-th part of the shape of the Young tableau. 

\begin{proposition}
	\label{prop:lambda_as_Groth}
	The distribution of the random Young diagram $\lambda=(\lambda_1,\ldots,\lambda_N )$
	defined by \eqref{eq:lambda_Groth_from_T} 
	coincides with the Grothendieck measure \eqref{eq:Grothendieck_measure_definition_complete_intro} 
	with homogeneous parameters
	$x_i=x$, $y_j=y$, $\beta_r=\beta$.
\end{proposition}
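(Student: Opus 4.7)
The plan is to reduce the proposition to \Cref{prop:Grothendieck_measures_embedding} by identifying the joint distribution of the sequence of shapes $(T(N), T(N+1), \ldots, T(2N-1))$ produced by the sampling algorithm with a realization of the two-dimensional Schur process $\mathscr{M}^{2d}_{\vec\beta;\mathrm{Gr}}$ from \eqref{eq:Schur_process_from_Grothendieck}, and then read off the diagonal entries.

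First, I would analyze stage 1. By the classical RSK correspondence applied to the $N\times N$ matrix of i.i.d.\ geometric random variables with parameter $xy$, the joint distribution of the intermediate shapes $\varnothing = T(0) \subseteq T(1) \subseteq \ldots \subseteq T(N)$ is the standard Schur dynamics: the joint weight is proportional to $\prod_{t=1}^{N} s_{T(t)/T(t-1)}(x) \cdot s_{T(N)}(y, \ldots, y)$, with the normalization $(1-xy)^{N^2}$ dictated by the Cauchy identity. In particular, the marginal of $T(N)$ is the Schur measure \eqref{eq:Schur_m_for_beta_zero}, as already noted in the text.

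Second, I would analyze stage 2. The dual RSK row insertion $\mathscr{Q}^{q=0}_{\mathrm{row}}[-\hat\beta]$ from \cite[Section~4.3]{MatveevPetrov2014} is designed precisely so that inserting a single row of $N$ independent Bernoulli random variables with success probability $-\beta x/(1-\beta x)$ produces a transition $\nu \to \eta$ in which $\eta/\nu$ is a vertical strip, with conditional weight $(1-\beta x)^{-N} \cdot s_{\eta'/\nu'}(-\beta)$. This is the content of the single-variable branching rule: the factor $(-\beta)^{|\eta/\nu|}$ matches $s_{\eta'/\nu'}(-\beta)$ exactly when $\eta/\nu$ is a vertical strip, and the dual RSK bijection accounts for the multiplicity of $0$--$1$ rows producing each such transition. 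Combining the two stages, the joint distribution of $(T(N), T(N+1), \ldots, T(2N-1))$ is proportional to
\begin{equation*}
s_{T(N)}(x, \ldots, x) \cdot s_{T(N)}(y, \ldots, y) \cdot \prod_{s=1}^{N-1} s_{T(N+s)'/T(N+s-1)'}(-\beta).
\end{equation*}
Under the relabeling $\mu^m \coloneqq T(2N-m)$ for $m=1, \ldots, N$ (which reverses the sequence and yields the correct containment $\mu^1 \supseteq \ldots \supseteq \mu^N$ with vertical-strip differences), this coincides with the Schur-process weight of $\mathscr{M}^{2d}_{\vec\beta;\mathrm{Gr}}(\mu^1, \ldots, \mu^N)$ in \eqref{eq:Schur_process_from_Grothendieck} under the homogeneous specialization $\beta_r \equiv \beta$.

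Finally, the sampling rule \eqref{eq:lambda_Groth_from_T} translates under $m = N-s+1$ (so that $2N-m = N+s-1$) into $\lambda_m = T(2N-m)_m = \mu^m_m$ for $m = 1, \ldots, N$. The conclusion then follows immediately from \Cref{prop:Grothendieck_measures_embedding}, which asserts that $(\mu^1_1, \ldots, \mu^N_N)$ under the Schur process has the same joint distribution as the Grothendieck random partition under $\mathscr{M}_{\vec\beta;\mathrm{Gr}}$. The main obstacle is the precise identification in stage 2 of the probability weight for a single Bernoulli-row insertion with the skew Schur factor $s_{\eta'/\nu'}(-\beta)$; this is a consequence of the dual RSK algorithm realizing the dual Cauchy identity $\prod_{i}(1 + (-\beta) x_i) \cdot \sum_\eta \cdots$, and its careful verification in the present specialization relies on the explicit combinatorics worked out in \cite{MatveevPetrov2014}, applied with the substitution of the Bernoulli parameter $-\beta x/(1-\beta x)$ chosen above.
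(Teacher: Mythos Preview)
Your overall strategy is the same as the paper's: identify the shape sequence $(T(N),\ldots,T(2N-1))$ with the Schur process \eqref{eq:Schur_process_from_Grothendieck} via the relabeling $\mu^m=T(2N-m)$, and then invoke \Cref{prop:Grothendieck_measures_embedding}. The paper does not spell out the mechanism and simply cites \cite[Theorem~10]{Borodin2010Schur} together with the RSK realizations in \cite{BorodinPetrov2013NN}, \cite{MatveevPetrov2014} for this identification.

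However, the explicit joint-distribution formula you display is wrong, and your subsequent claim that it ``coincides with the Schur-process weight \ldots\ in \eqref{eq:Schur_process_from_Grothendieck}'' is therefore false as stated. Under your own relabeling your formula becomes
\[
s_{\mu^N}(x,\ldots,x)\,s_{\mu^N}(y,\ldots,y)\prod_{m=1}^{N-1} s_{(\mu^m)'/(\mu^{m+1})'}(-\beta),
\]
whereas \eqref{eq:Schur_process_from_Grothendieck} has $s_{\mu^1}(x,\ldots,x)$ in place of $s_{\mu^N}(x,\ldots,x)$; these are different measures. The error originates in your description of the stage-2 transition: the dual-RSK step $\mathscr{Q}_{\mathrm{row}}^{q=0}[-\hat\beta]$ acts on the full interlacing array, and the induced Markov transition on \emph{shapes} is not proportional to $s_{\eta'/\nu'}(-\beta)$ alone but carries a Doob $h$-transform factor,
\[
P(\nu\to\eta)=(1-\beta x)^{-N}\,\frac{s_\eta(x,\ldots,x)}{s_\nu(x,\ldots,x)}\,s_{\eta'/\nu'}(-\beta),
\]
which is a genuine probability by the dual skew Cauchy identity. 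Iterating this from $T(N)$ telescopes the $s_{\bullet}(x,\ldots,x)$ factors and produces exactly $s_{T(2N-1)}(x,\ldots,x)\prod_s s_{T(N+s)'/T(N+s-1)'}(-\beta)\,s_{T(N)}(y,\ldots,y)$, i.e., the Schur-process weight. With this correction (which is precisely what the cited references establish), the rest of your argument is fine.
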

\begin{proof}
	The joint distribution of the shapes of the semistandard 
	Young tableaux $T(N+s-1)$, $s=1,\ldots,N$,
	is given by the Schur process
	$\mathscr{M}^{2d}_{\vec\beta;\mathrm{Gr}}$ \eqref{eq:Schur_process_from_Grothendieck} (with homogeneous parameters),
	where the shape of $T(N+s-1)$ is $\mu^{N-s+1}$. Indeed, this statement follows from the general Schur dynamics result
	\cite[Theorem~10]{Borodin2010Schur} (where instead of the push-block dynamics one can use the Robinson--Schensted--Knuth one, 
	cf.~\cite{BorodinPetrov2013NN}, \cite{MatveevPetrov2014}). 
	With this identification, the desired claim follows from 
	\Cref{prop:Grothendieck_measures_embedding}.
\end{proof}

\bibliographystyle{alpha}
\bibliography{bib}

\bigskip

\textsc{S. Gavrilova, HSE University (Moscow, Russia) and Massachusetts Institute of Technology (Cambridge, MA, USA)}

E-mail: \texttt{sveta117@mit.edu}

\medskip

\textsc{L. Petrov, University of Virginia (Charlottesville, VA, USA)}

E-mail: \texttt{lenia.petrov@gmail.com}

\newpage

	\begin{figure}[t]
		\centering
		\includegraphics[width=.6\textwidth]{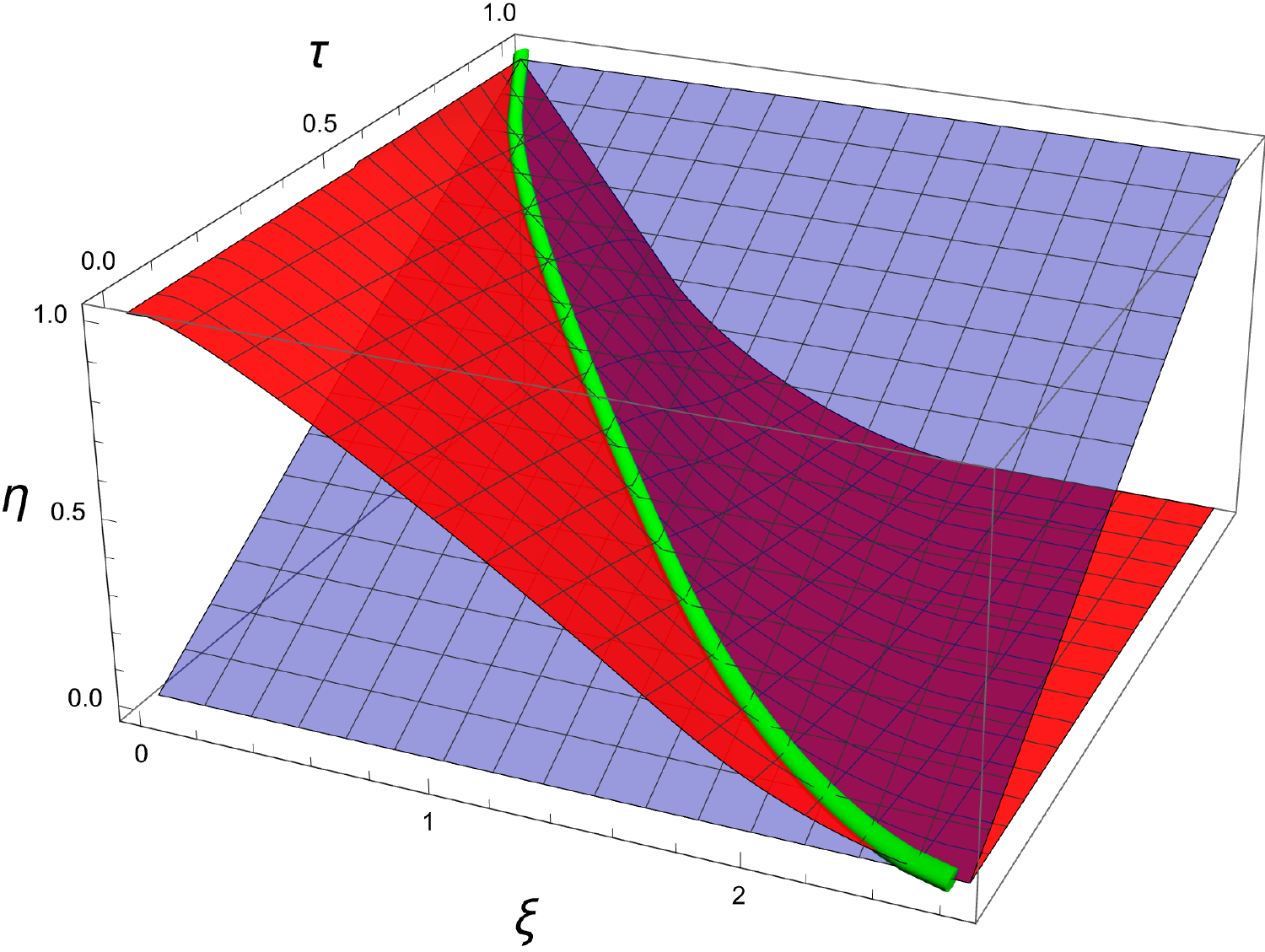}
		\hspace{10pt}
		\raisebox{110pt}{\begin{minipage}{.3\textwidth}
			\includegraphics[width=\textwidth]{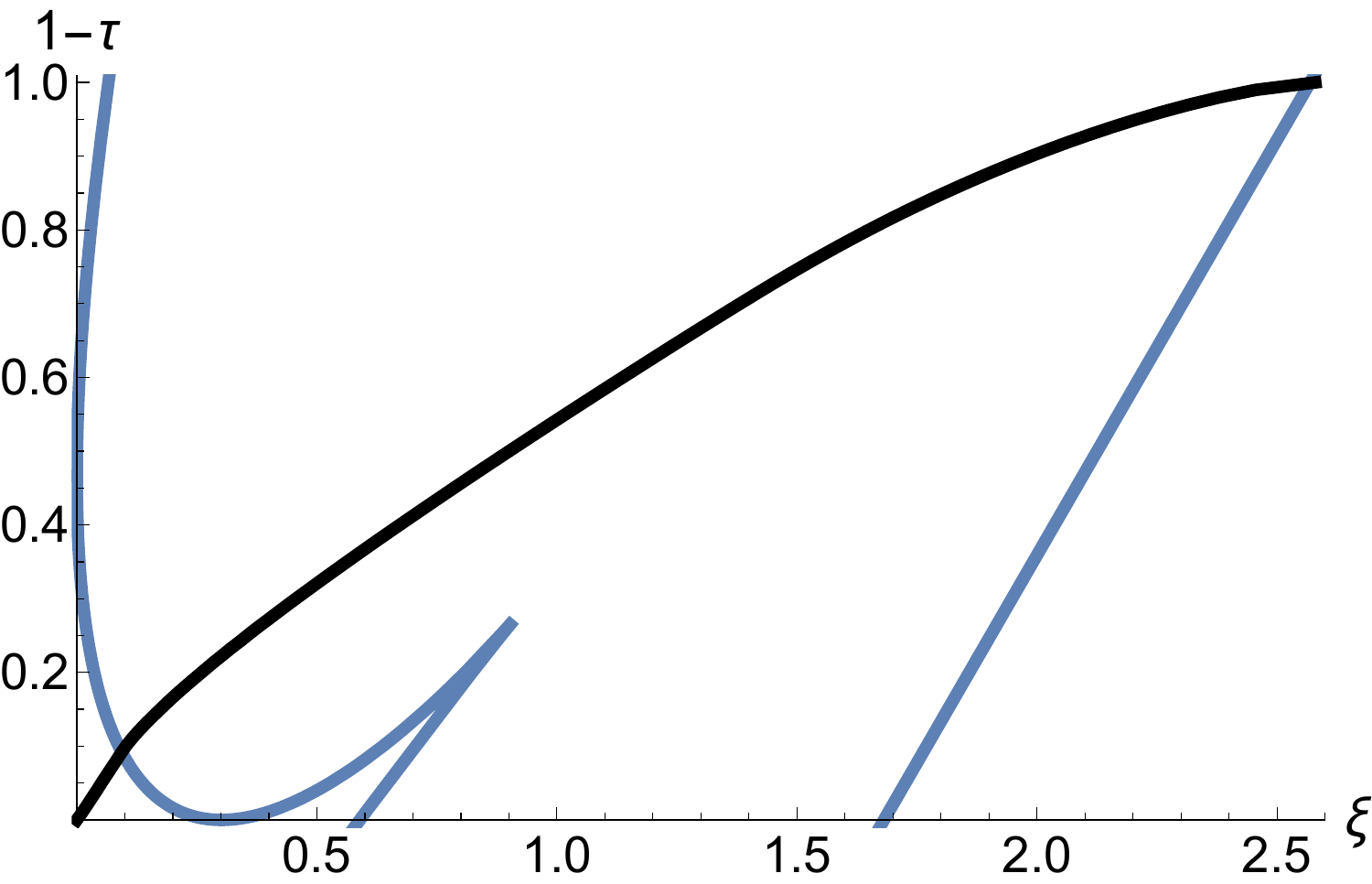}
				\\\vspace{10pt}\\
		\includegraphics[width=\textwidth]{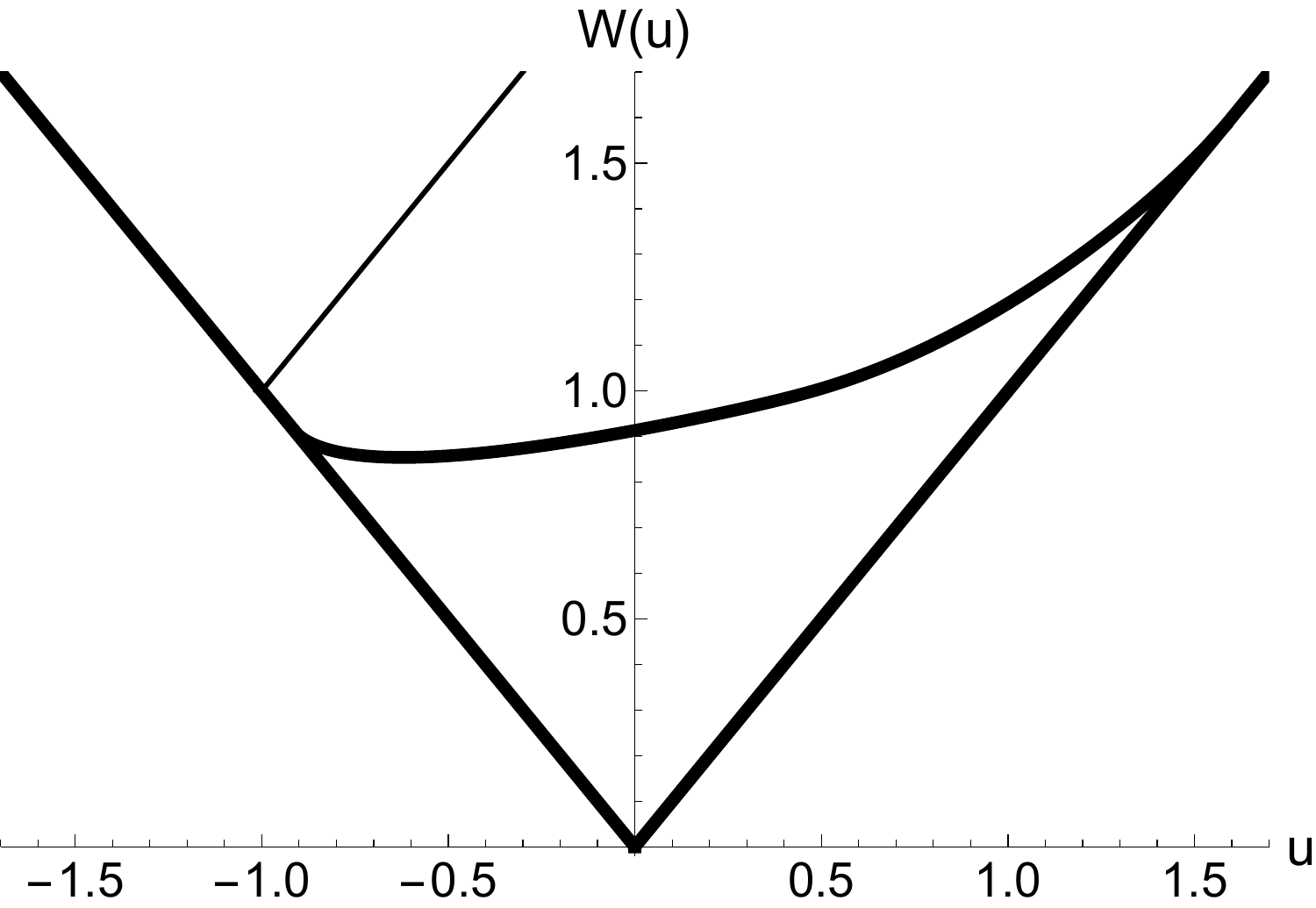}
	\end{minipage}}	
		\caption{Graphs with $x=1/3$, $y=1/5$, $\beta=-6$. See \Cref{sub:simulations_and_particular_cases} for description.}
		\label{fig:sim1}
	\end{figure}

	\begin{figure}[htbp]
		\centering
		\raisebox{75pt}{\begin{minipage}{.41\textwidth}
				\includegraphics[width=\textwidth]{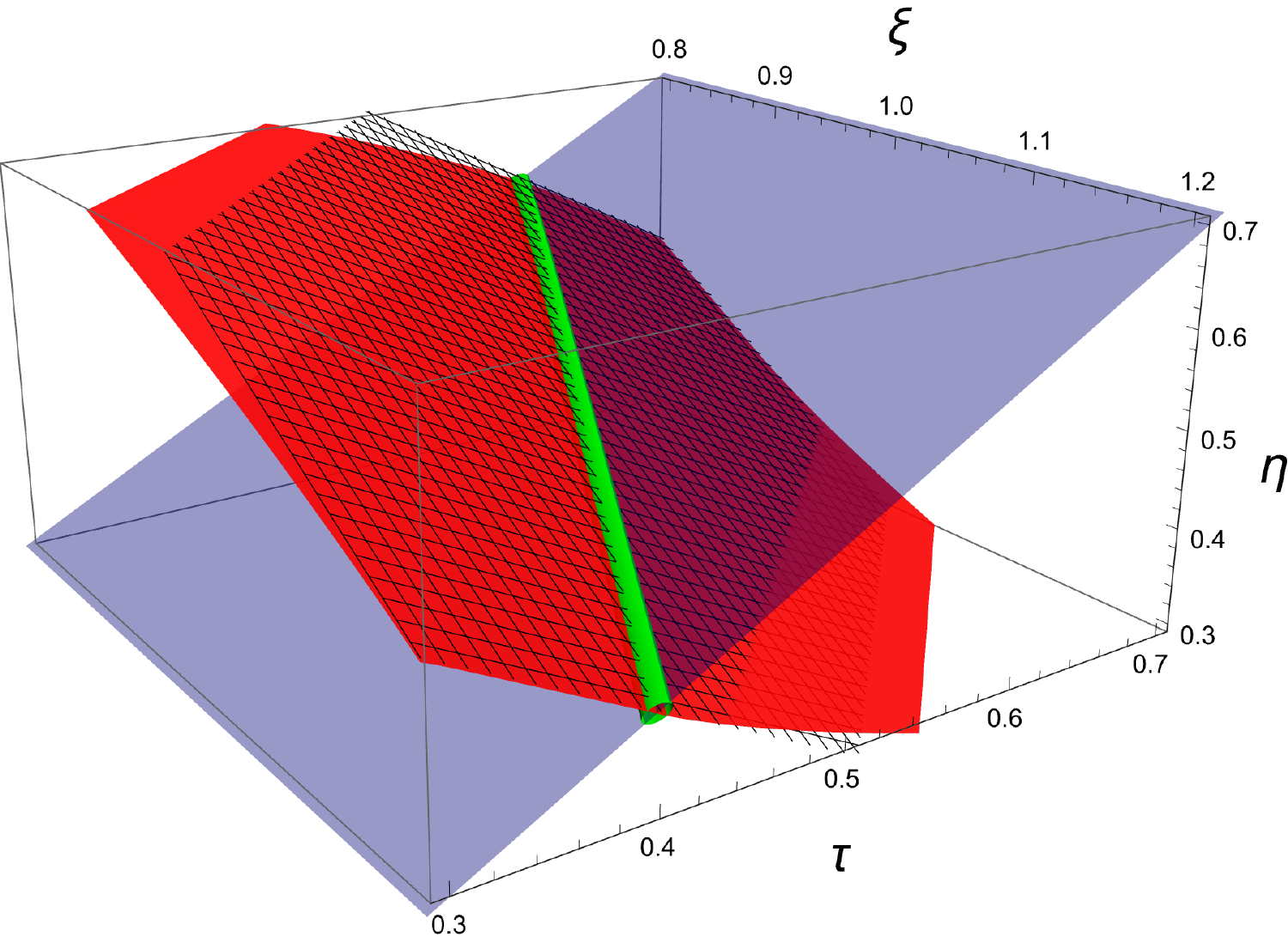}
				\\\vspace{10pt}\\
				\includegraphics[width=\textwidth]{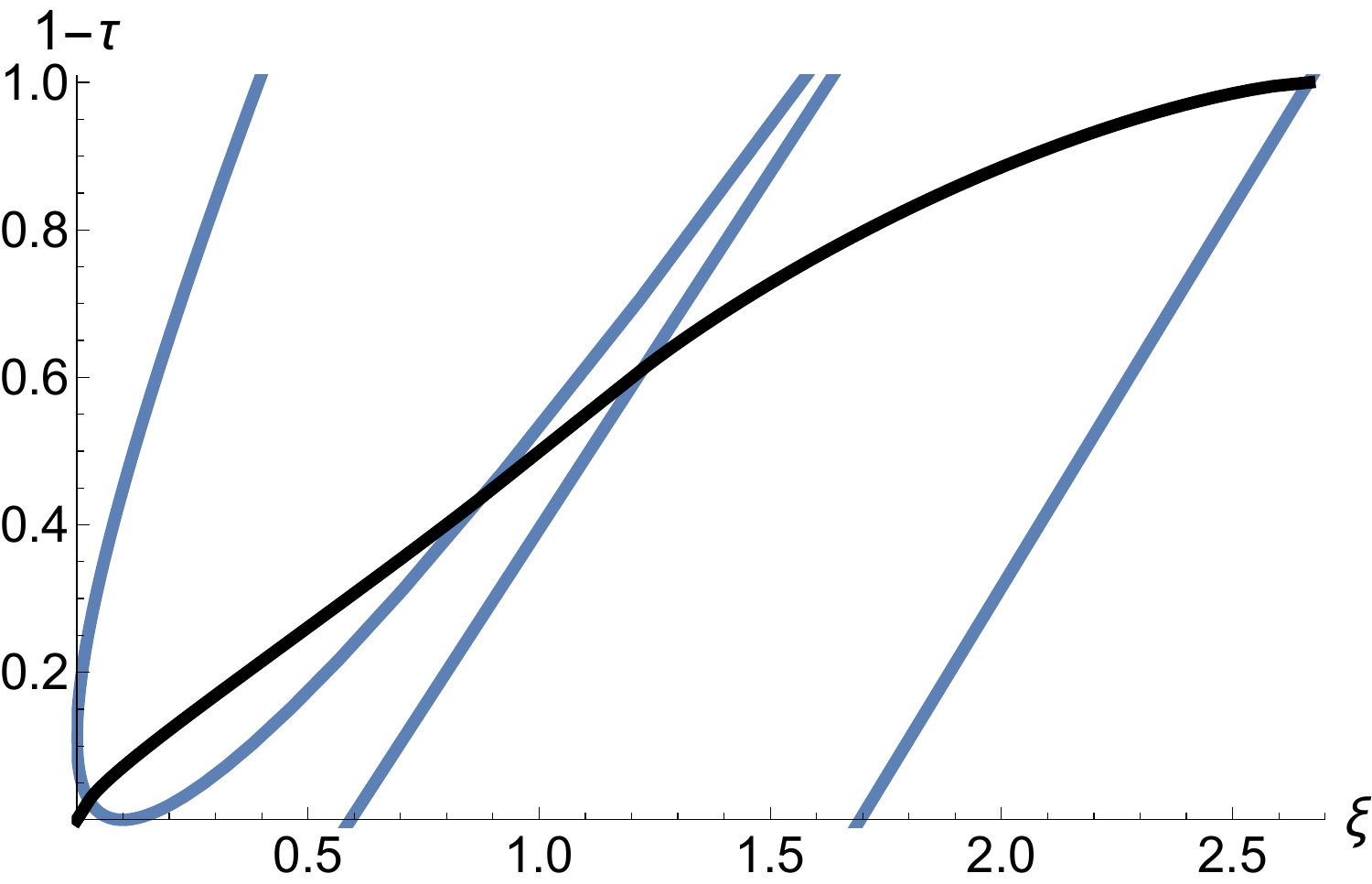}
		\end{minipage}}	
		\hspace{10pt}
		\includegraphics[width=.55\textwidth]{W_graph_large_beta.pdf}
		\caption{Graphs with $x=1/3$, $y=1/5$, $\beta=-25$. See \Cref{sub:simulations_and_particular_cases} for description.}
		\label{fig:sim2}
	\end{figure}

	\begin{figure}[htpb]
		\centering
		\includegraphics[trim={0.7cm 0.6cm .7cm 0cm}, clip, width=.46\textwidth]{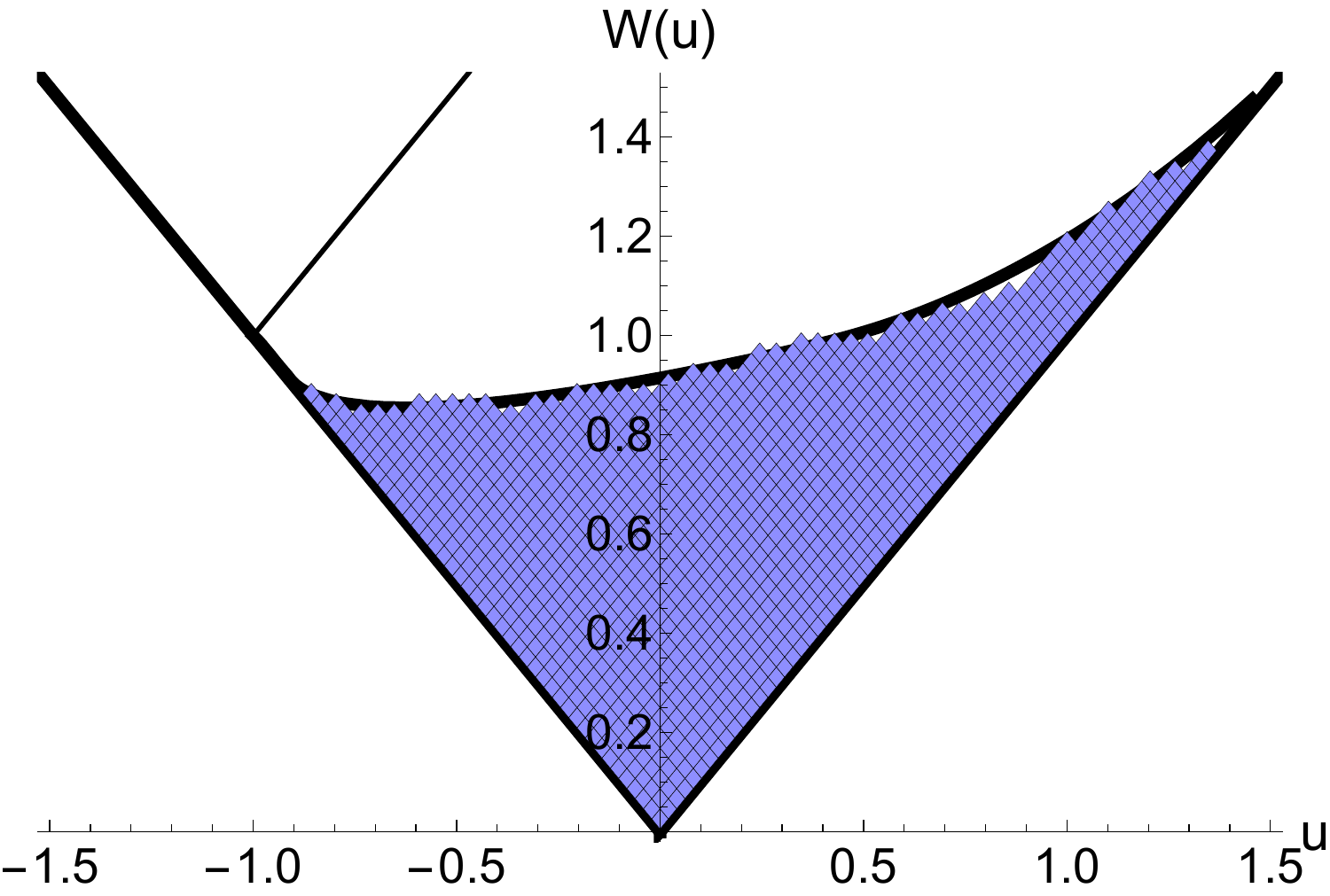}
		\qquad 
		\includegraphics[trim={0.7cm 0.6cm .7cm 0cm}, clip, width=.46\textwidth]{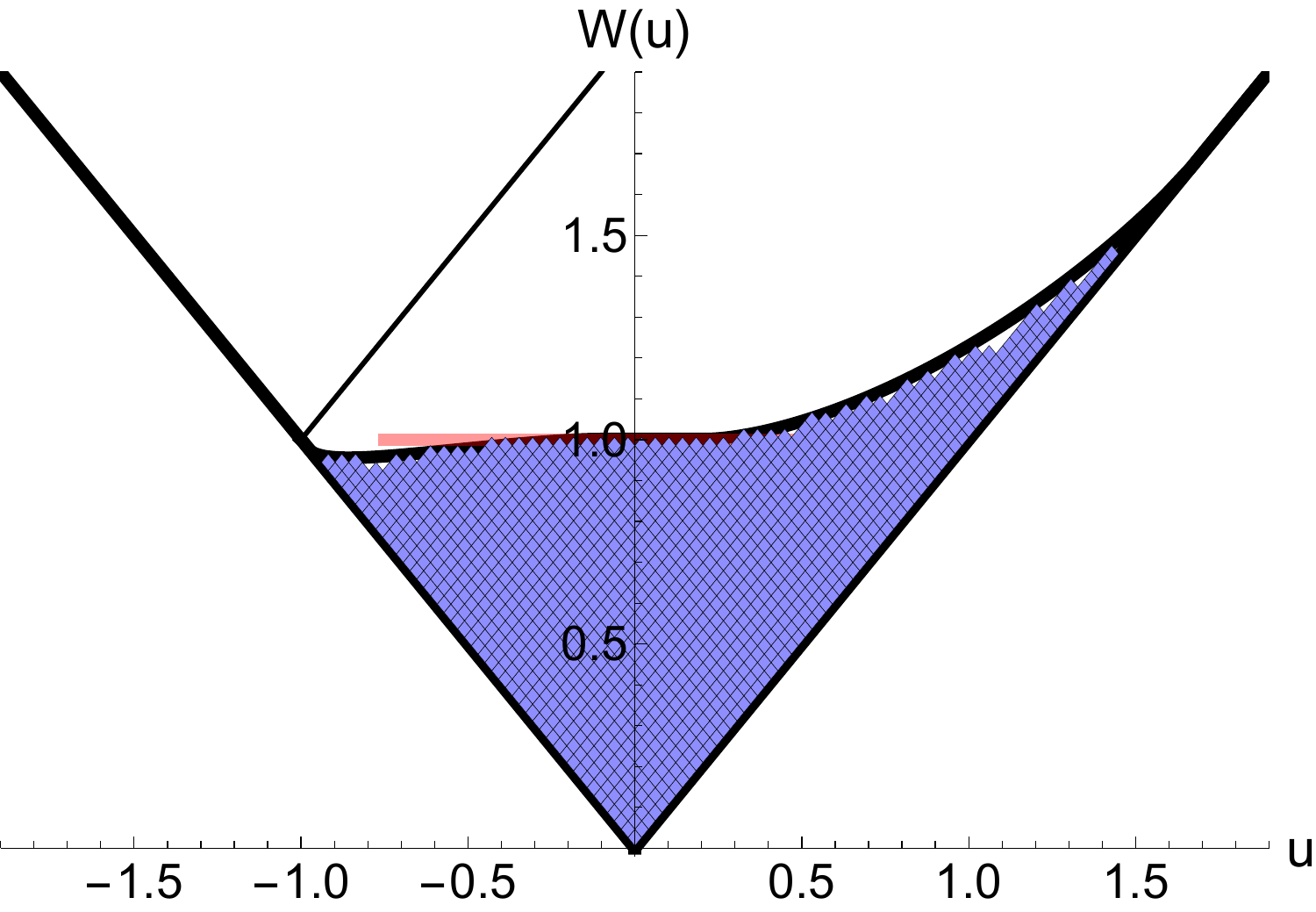}
		\caption{Exact samples of the Grothendieck random partitions with $N=50$ and parameters
			$x=1/3$, $y=1/5$, and $\beta=-6$ (left plot) or $\beta=-25$ (right plot). See \Cref{sub:sampling} 
			for a discussion of the sampling mechanism. We observe that the samples follow the limit shapes from 
		\Cref{fig:sim1,fig:sim2}, as it should be. In particular, notice the staircase frozen facet on the right plot.}
		\label{fig:samples}
	\end{figure}

	\begin{figure}[htpb]
		\centering
		\includegraphics[width=.7\textwidth]{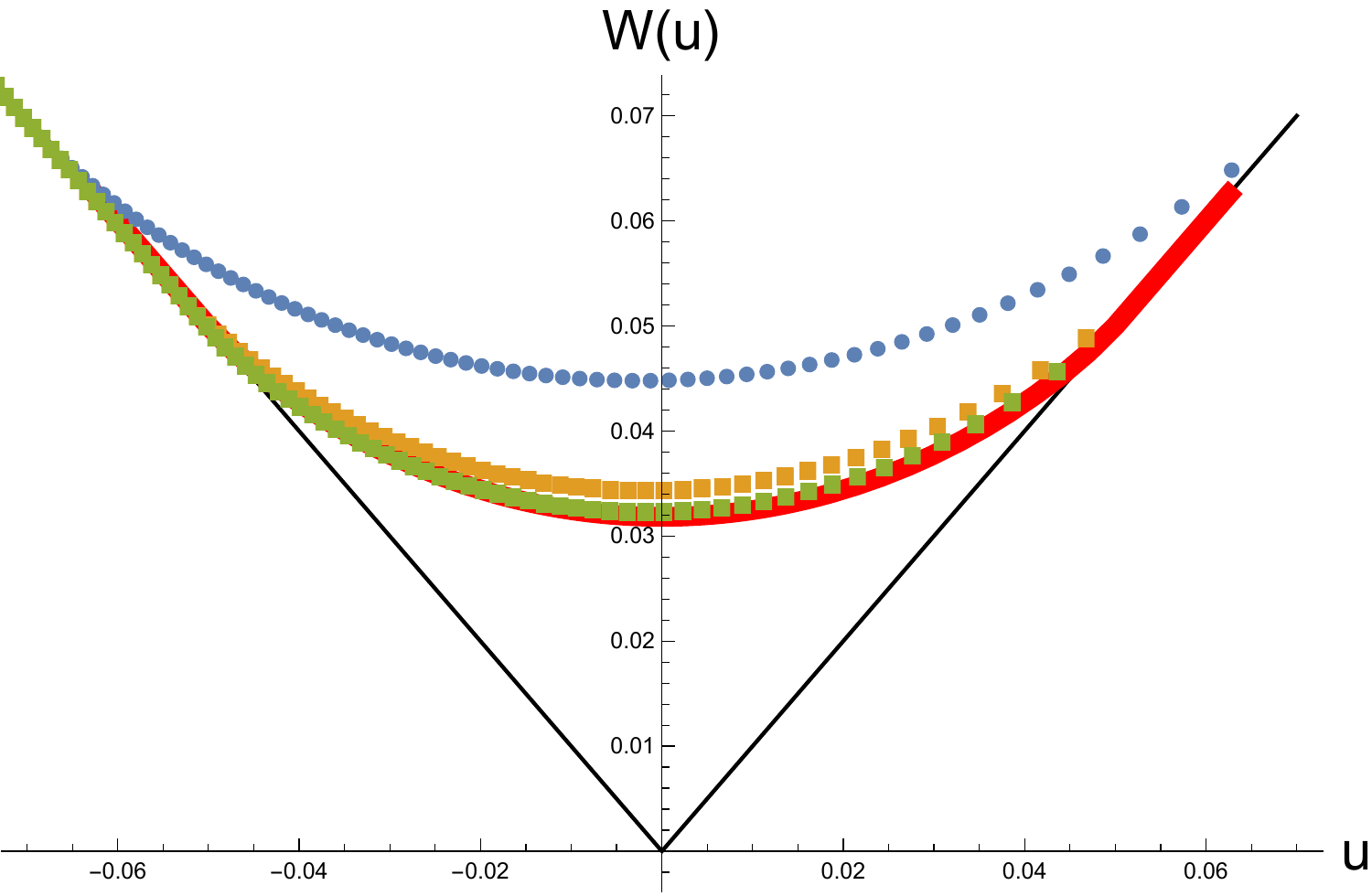}
		\caption{Graphs of $\mathfrak{W}(u)$ with $x=y=1/40$ and $\beta=-1/40$ (round),
			$\beta=-1/250$ (yellow squares), and $\beta=-1/1600$ (green squares).
		We also added the scaled VKLS curve $\sqrt{xy}\,\Omega(u/\sqrt{xy})$. 
		See \Cref{sub:simulations_and_particular_cases} 
		for more detail.}
		\label{fig:sim3}
	\end{figure}

	\begin{figure}[htpb]
		\centering
		\includegraphics[width=.7\textwidth]{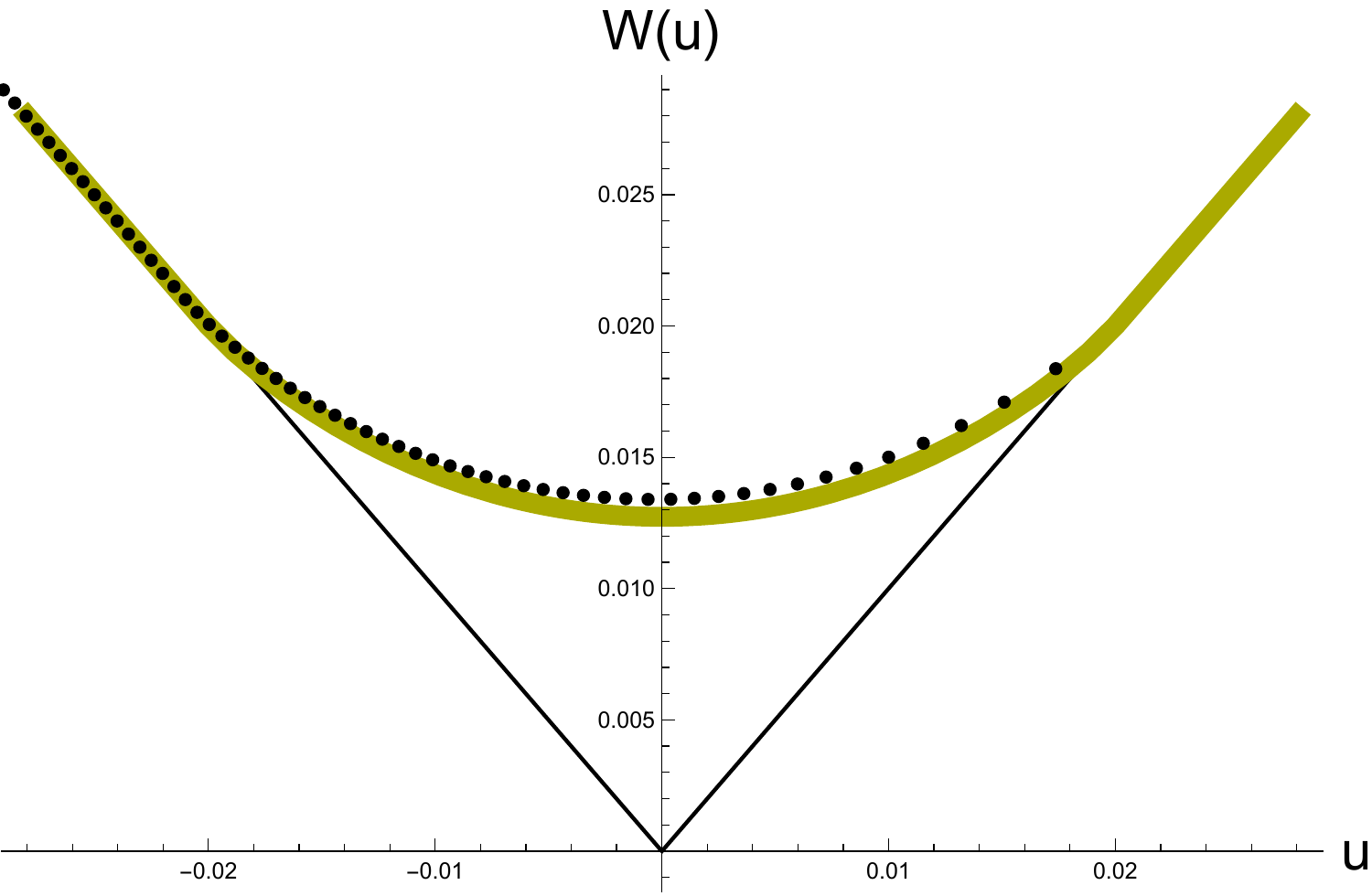}
		\caption{Graph of $\mathfrak{W}(u)$ with $x=y=1/100$ and $\beta=-1/1000$ and the 
		scaled VKLS curve $\sqrt{xy}\,\Omega(u/\sqrt{xy})$.
		See \Cref{sub:simulations_and_particular_cases} 
		for more detail.}
		\label{fig:sim4}
	\end{figure}

	\begin{figure}[htpb]
		\centering
		\raisebox{80pt}{\begin{minipage}{.3\textwidth}
			\includegraphics[width=\textwidth]{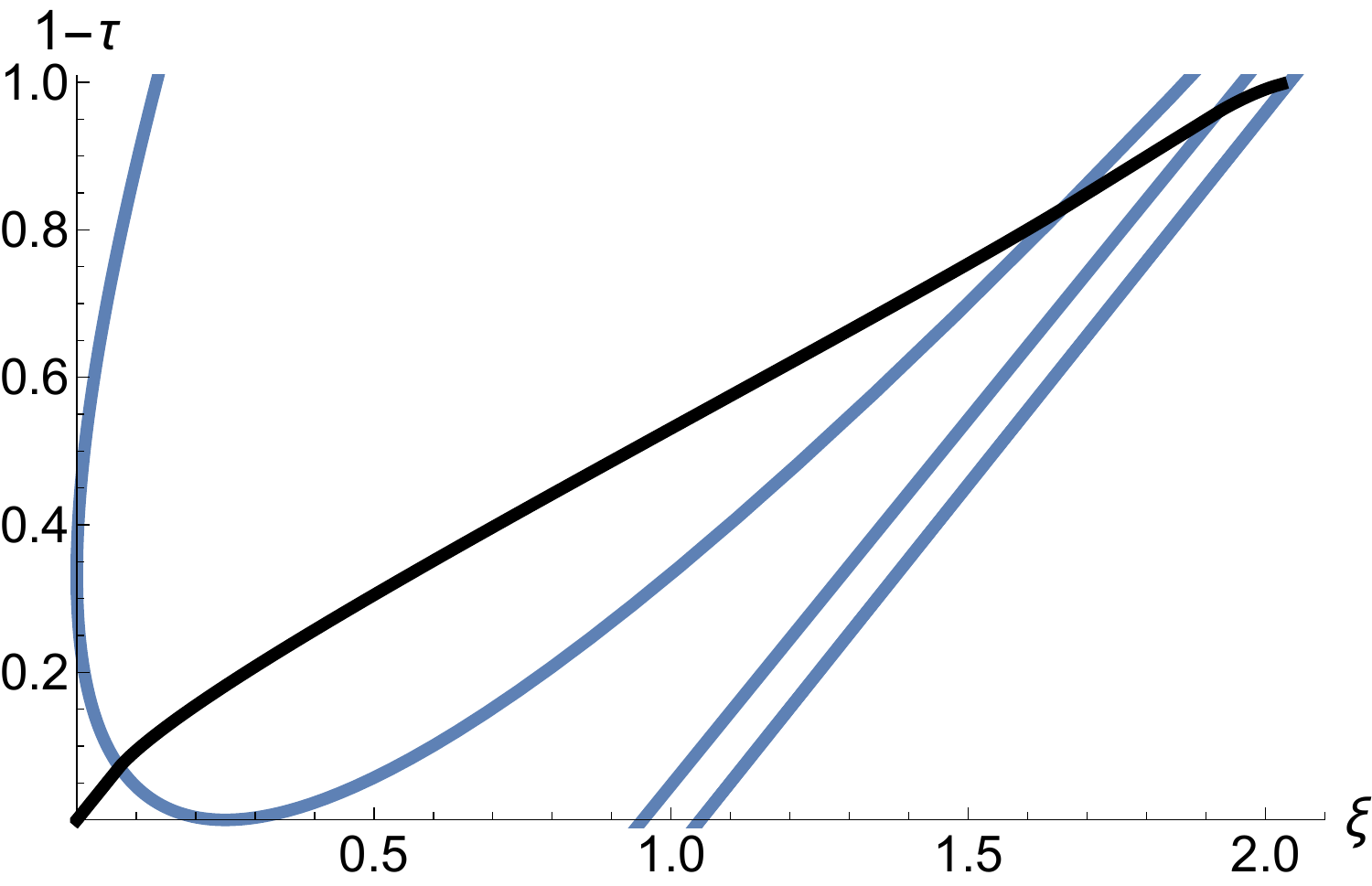}
				\\\vspace{10pt}\\
		\includegraphics[width=\textwidth]{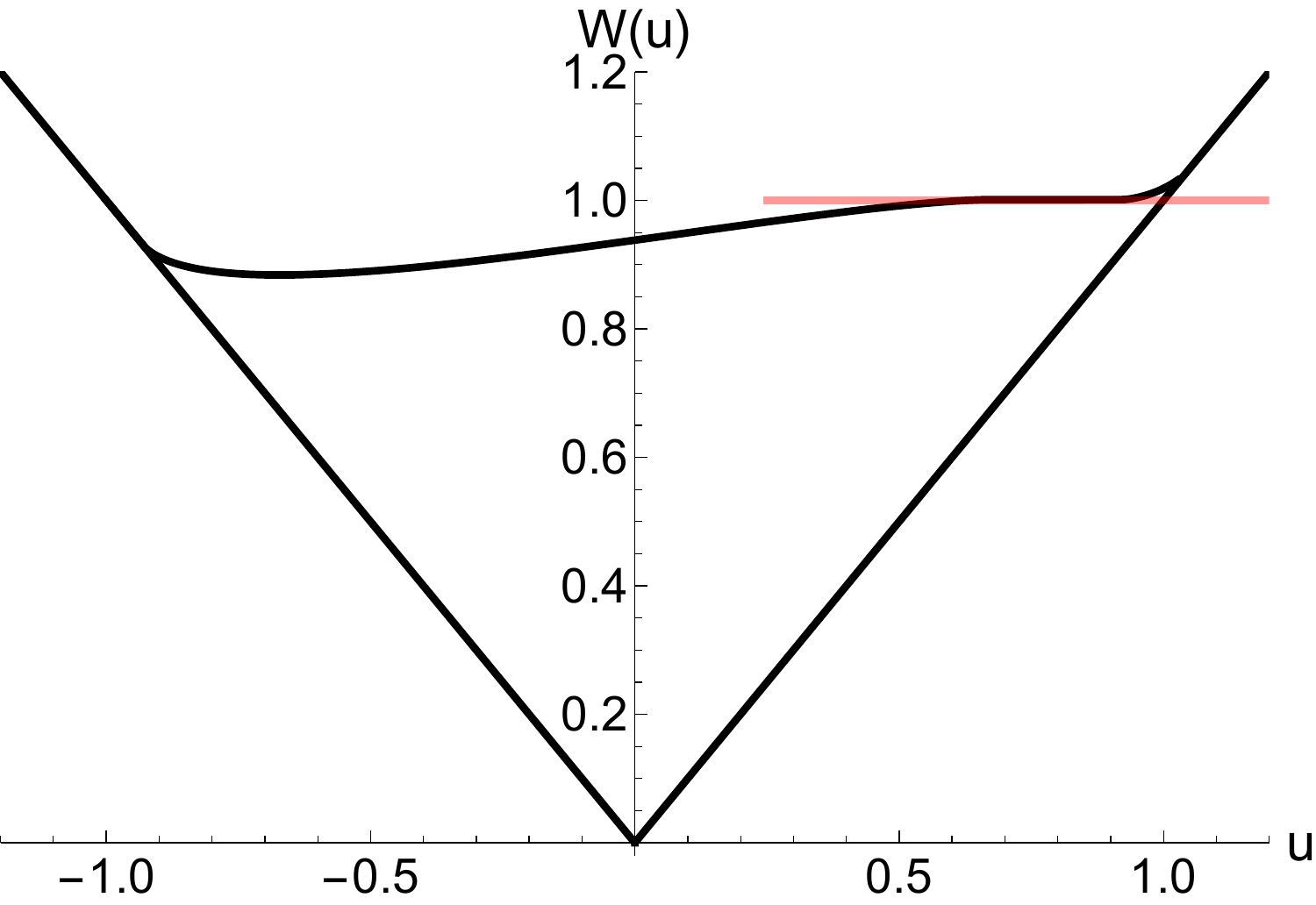}
	\end{minipage}}	
		\hspace{10pt}
		\includegraphics[width=.6\textwidth]{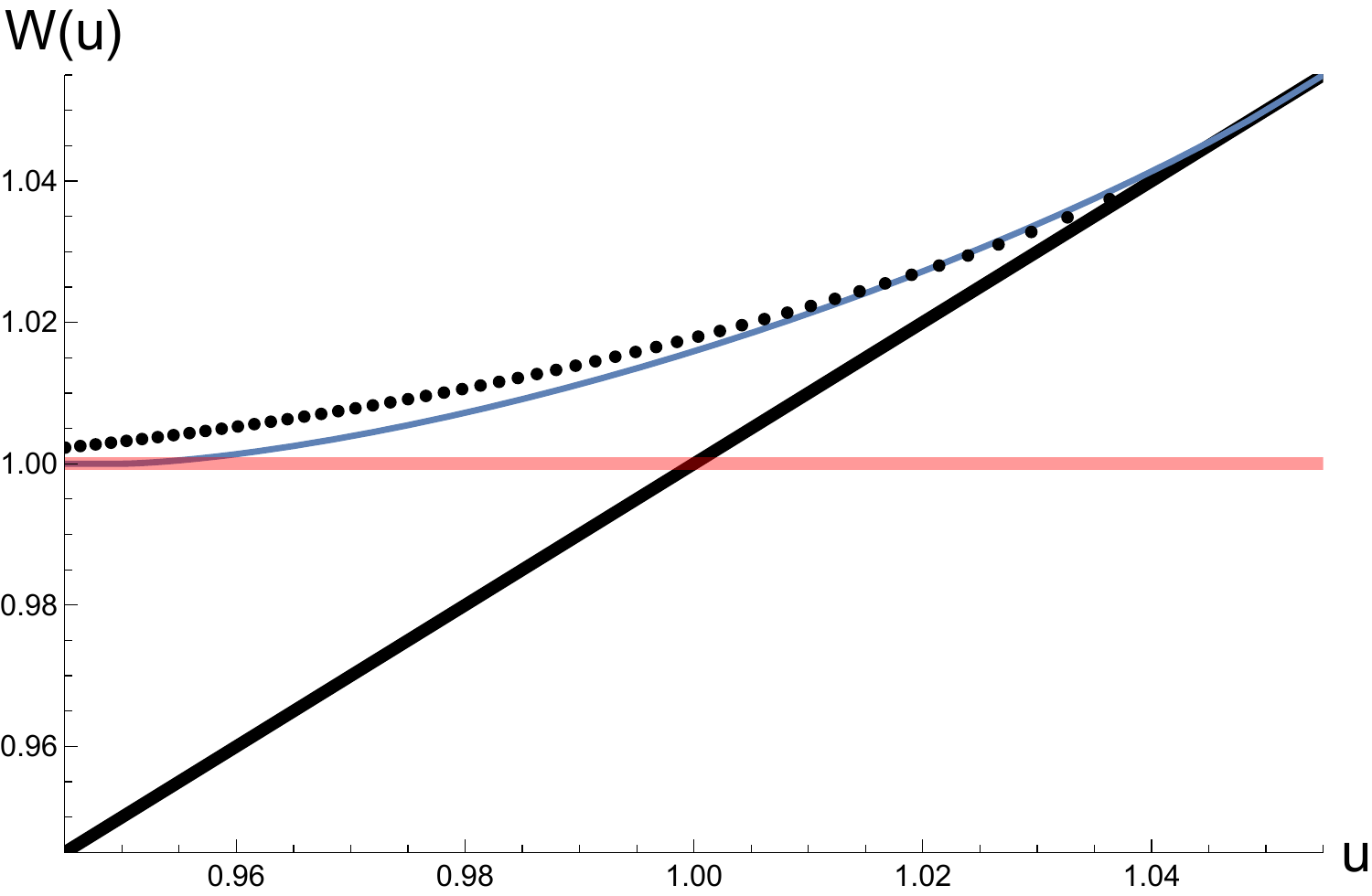}
		\caption{Graphs with $x=y=1/40$ and $\beta=-120$. On the right, we added the shifted and scaled VKLS shape 
		(given by \eqref{eq:Omega_for_shPl} with $\Omega_{(K)}$ replaced
		by $\Omega$ \eqref{eq:VKLS}). The Grothendieck limit 
		shape apparently is not close to the shifted and scaled VKLS shape.
		See \Cref{sub:simulations_and_particular_cases} 
		for more detail.}
		\label{fig:sim5}
	\end{figure}
	
	\begin{figure}[htpb]
		\centering
		\raisebox{80pt}{\begin{minipage}{.33\textwidth}
			\includegraphics[width=\textwidth]{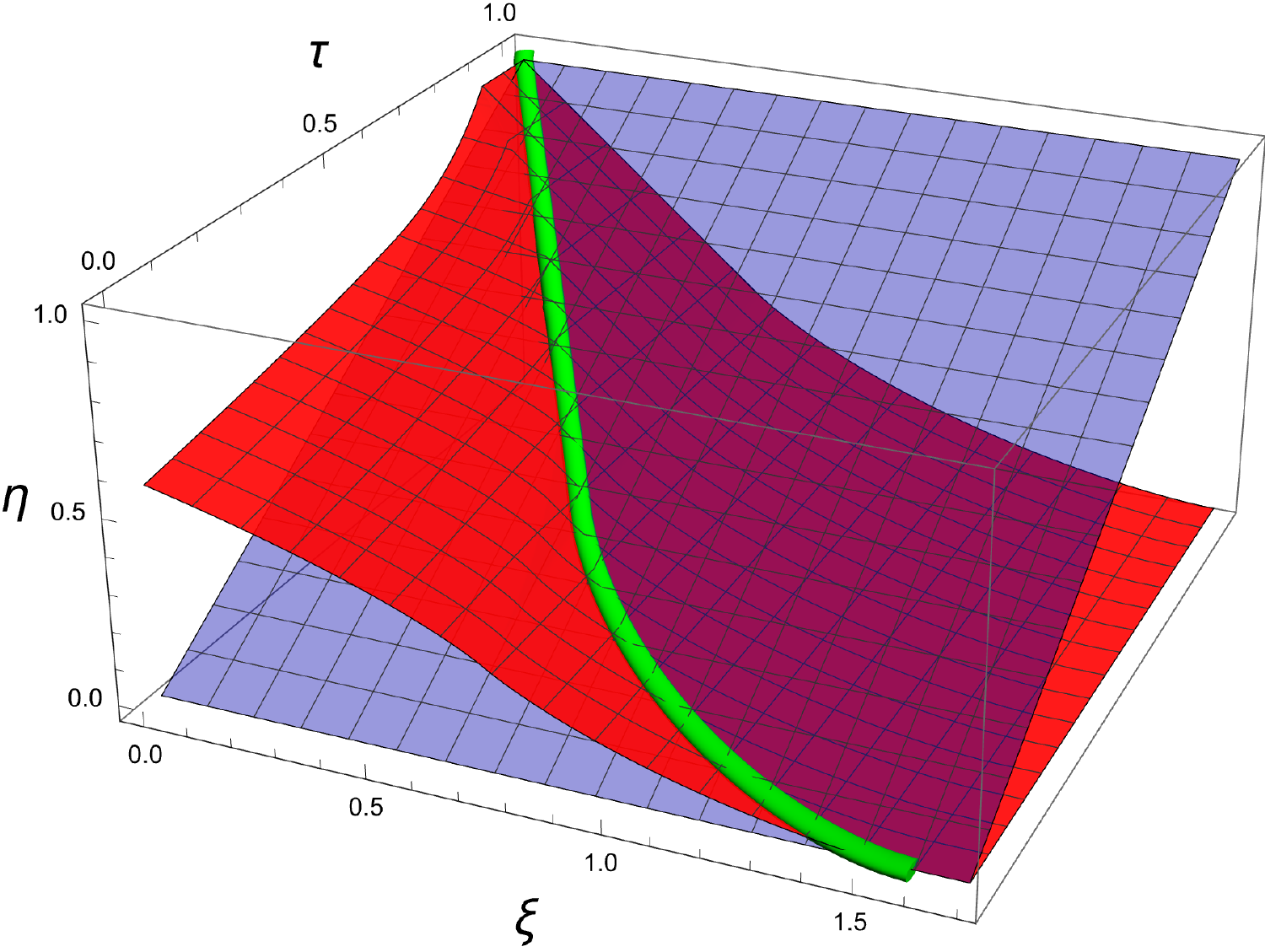}
				\\\vspace{10pt}\\
		\includegraphics[width=\textwidth]{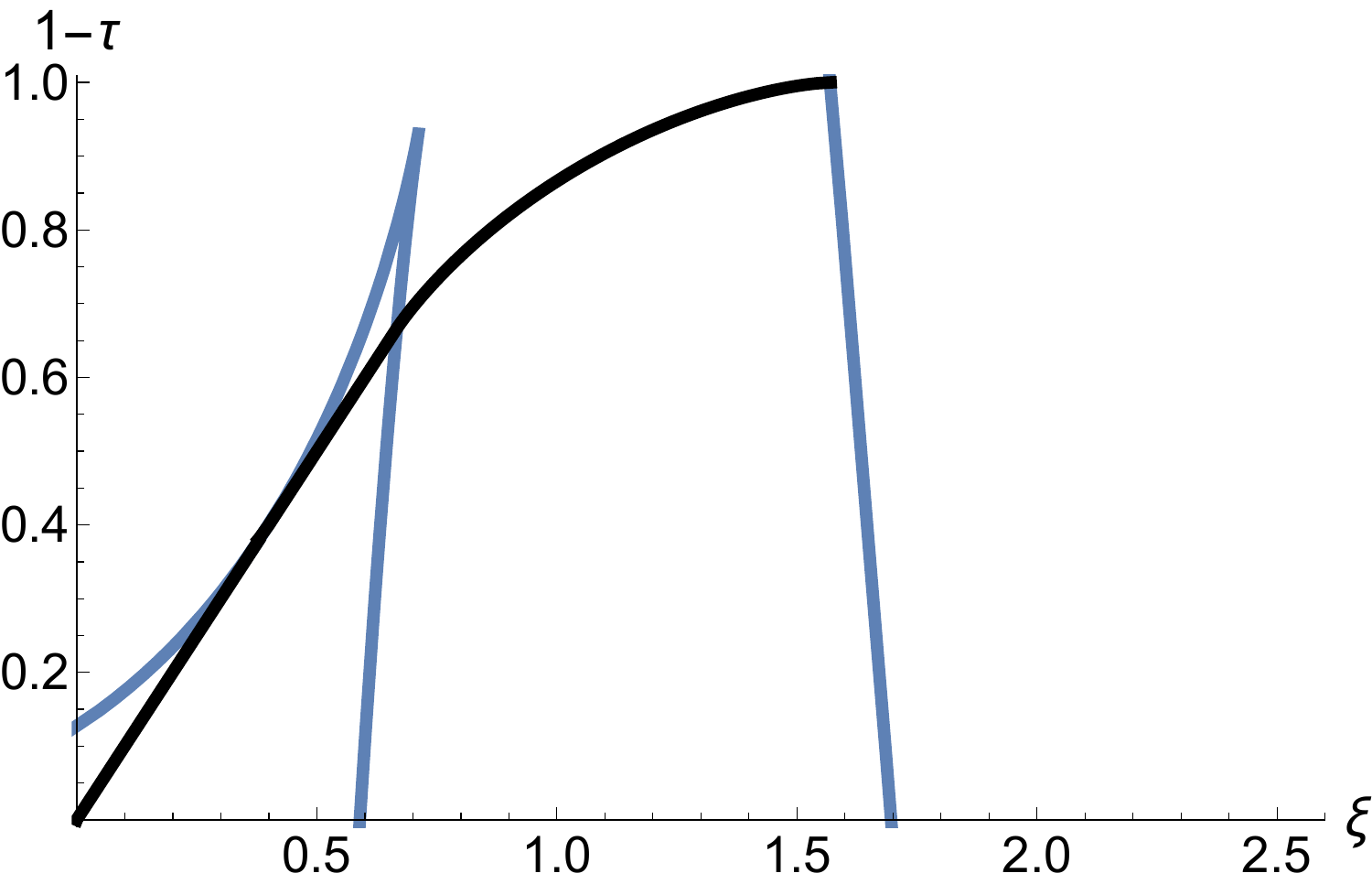}
	\end{minipage}}	
		\hspace{10pt}
		\includegraphics[width=.6\textwidth]{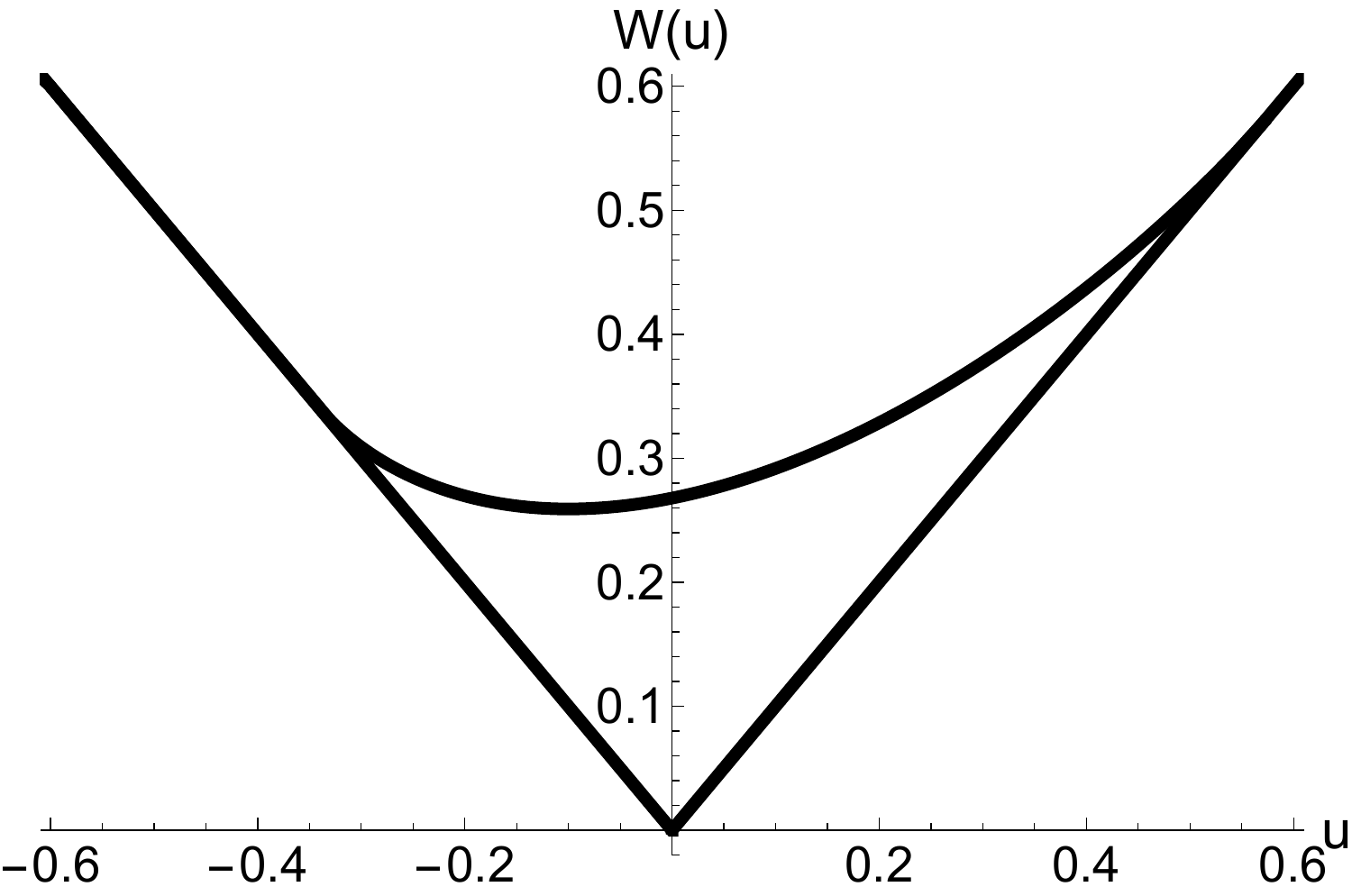}
		\caption{Graphs with $x=1/3$, $y=1/5$, and $\beta=1/12$. 
			The two graphs on the right do not correspond to a
			nonnegative probability measure on two-dimensional point configurations.
			However, we conjecture that the Grothendieck measures
			still converge to the limit shape $\mathfrak{W}(u)$ on the right.
			See \Cref{sub:simulations_and_particular_cases} and 
			\Cref{conj:positive_beta}
			in particular
			for details.}
		\label{fig:sim6}
	\end{figure}

\end{document}